\numberwithin{equation}{section}
\theoremstyle{definition}
 \newtheorem{definition}{Definition}[section]
 \newtheorem{remark}{Remark}[section]
\theoremstyle{plain}
 \newtheorem{theorem}{Theorem}[section]
 \newtheorem{lemma}{Lemma}[section]
 \newtheorem*{lemma*}{Lemma}
 \newtheorem{corollary}{Corollary}[section]
 \newtheorem{statement}{Statement}[section]
\begin{document}
\newcommand{\Es}{\EuScript}
\newcommand{\scrscr}{\scriptscriptstyle}
\newcommand{\diag}{\mathop{\mathrm{diag}}}
\newcommand{\mE}{{\mathcal{E}}}
\newcommand{\mV}{{\mathcal{V}}}
\newcommand{\mI}{{\mathcal{I}}}
\newcommand{\mT}{{\mathcal{T}}}

\newcommand{\R}{{\mathbb R}}
\newcommand{\N}{{\mathbb N}}
\newcommand{\mC}{{\mathbb C}}
\newcommand{\Z}{{\mathbb Z}}
\newcommand{\Rn}{{{\mathbb R}^n}}
\newcommand{\Rm}{{{\mathbb R}^m}}
\newcommand{\Cn}{{{\mathbb C}^n}}
\newcommand{\Cm}{{{\mathbb C}^m}}

\newcommand{\T}{{\mathop{\mathrm{T}}}}
\newcommand{\tr}{\mathop{\mathrm{tr}}\nolimits}
\newcommand{\sgn}{\mathop{\mathrm{sgn}}}
\newcommand{\Ker}{\mathop{\mathrm{Ker}}}
\newcommand{\rank}{\mathop{\mathrm{rank}}}
\newcommand{\res}{\mathop{\mathrm{Res}}}
\newcommand{\ee}{\mathrm{e}}
\newcommand{\ii}{\mathrm{i}} 
\newcommand{\dd}{\textup{d}}
\newcommand{\la}{\lambda}
\renewcommand{\Im}{\operatorname{Im}}
\renewcommand{\Re}{\operatorname{Re}}
\newcommand{\ord}{\mathrm{O}}
\newcommand{\grad}{\mathop{\mathrm{grad}}}

\newcommand{\lin}{\mathop{\mathrm{Lin}}}

\title{\textbf{Criterion of the global solvability of regular and singular differential-algebraic equations}}
\author{\large \textbf{Maria Filipkovska}  \\
 \it\small  Friedrich-Alexander-Universit\"{a}t Erlangen-N\"{u}rnberg,\; Cauerstrasse 11, 91058 Erlangen, Germany\\
 \it\small  B. Verkin Institute for Low Temperature Physics and Engineering \\
 \it\small  of the National Academy of Sciences of Ukraine,\; Nauky Ave. 47, 61103 Kharkiv, Ukraine\\
 \small maria.filipkovska@fau.de;\; filipkovskaya@ilt.kharkov.ua}

\date{ }
\maketitle

 \begin{abstract}
For regular and nonregular (singular) semilinear differential-algebraic equations (DAEs), we prove theorems on the existence and uniqueness of global solutions and on the blow-up of solutions, which allow one to identify the sets of initial values for which the initial value problem has global solutions and/or for which solutions is blow-up in finite time, as well as the regions that the solutions cannot leave. Together these theorems provide a criterion of the global solvability of semilinear DAEs. As a consequence, we obtain conditions for the global boundedness of solutions.

 \medskip\noindent
{\small\emph{Key words}:\; singular pencil;  regular pencil; differential-algebraic equation; degenerate differential equation;  global solution; instability; blow-up

 \medskip\noindent
\emph{MSC2020}: 34A09, 34A12, 15A22, 34C11, 34D23}
 \end{abstract}

 \section{Introduction}

Differential-algebraic equations (DAEs) arise from the modelling of processes and systems in mechanics, control problems, gas industry, chemical kinetics, radio engineering, economics and other fields (see, e.g., \cite{BGHHST,Kunkel_Mehrmann,Vlasenko1} and references therein). It is well known that DAEs are used in modelling various objects whose structure is described by directed graphs, e.g., electrical, gas and neural networks. The application of DAEs in electrical circuit theory is described, for example, in \cite{Kunkel_Mehrmann,Vlasenko1,Fil.UMJ,Fil.MPhAG,Rut,rut-sing} and in gas network theory is shown, e.g., in \cite{BGHHST,KSSTW22}.
In the papers \cite{Fil.MPhAG} and \cite{Fil.UMJ}, nonlinear electrical circuits described by regular and singular (i.e., nonregular) semilinear DAEs, respectively, have been studied by using the proved (in these works) theorems on the Lagrange stability, which also give conditions for the global solvability, and on the Lagrange instability. In \cite{Fil.Sing-GN}, a gas network model similar to that presented in the work \cite{KSSTW22} has been considered. This model is described by an overdetermined system of differential and algebraic equations and is written in the form of a nonregular semilinear DAE.
Note that underdetermined and overdetermined systems of differential and algebraic equations are particular cases of the systems which can be written as nonregular semilinear DAEs.

There are many works devoted to the study of the local solvability, structure and the Lyapunov stability of equilibrium positions of DAEs and the methods of their numerical solution (see, e.g., the monographs \cite{Kunkel_Mehrmann,Vlasenko1} and references therein). DAEs are also called descriptor equations (or systems), degenerate differential equations and differential equations (or dynamical systems) on manifolds.  Most of the works on DAEs are related to the study of regular DAEs.

The existence and uniqueness of local solutions of nonregular (singular)  semilinear DAEs in Banach spaces has been studied in \cite{rut-sing}, where the decomposition of a singular pencil into regular and purely singular components, which was called the RS-splitting of the pencil, and the block representations of the singular component in two special cases have been presented.
Nonregular DAEs in finite-dimensional spaces have been studied, e.g., in \cite{Kunkel_Mehrmann}, where the concept of the ``strangeness index'' of a DAE and a pair of matrices (or matrix functions) have been used.
The present work uses the concept of an index (not the strangeness index) only for a regular block of the characteristic pencil (in the case when the DAE is singular) or the regular characteristic pencil (in the case when the DAE is regular) (see explanations in Section \ref{Preliminaries} and Appendix \ref{Appendix}).

Using a generalized canonical form (GCF) of time-varying nonregular linear DAEs, their solvability and some important issues related to the application of the least squares method to their numerical solution have been studied in \cite{ChistyakovCh11}. A time-invariant nonregular linear DAE in GCF take the form of the DAE with the characteristic pencil (i.e., the matrix pencil associated with the DAE) in the Weierstrass-Kronecker canonical form. This form (see \cite{GantmaherII}) is usually used for solving linear time-invariant DAEs. In the present paper, we use the special block form of the singular operator pencil that consists of the singular and regular blocks where invertible and zero subblocks are separated out, and the block form of the corresponding matrix pencil is different from the canonical form presented~in~\cite{GantmaherII}.

 \smallskip
In the present paper, we generalize and weaken conditions of the existence and uniqueness of global solutions of regular and singular (nonregular) semilinear DAEs (see Section \ref{SectGlobSolv}) that have been obtained in early works \cite{Fil.MPhAG,Fil.UMJ,Fil.Sing-GN}. Also, we specifying conditions of the blow-up of solutions of the DAEs (see Section \ref{SectLagrUnst}). The main differences are that the obtained (in this paper) conditions allow one to identify the sets of initial values for which the initial value problem has global solutions and/or for which solutions is blow-up in finite time, as well as the regions which the solutions cannot leave. As a consequence, more general conditions than in early works are found for the global boundedness of solutions (see Section \ref{SectGlobSolv}).  Together, the conditions for the global solvability and the blow-up of solutions provide a criterion of the global solvability (see Section \ref{CritGlobSolv}).  Obtained results allow one to solve problems that cannot be studied using the theorems from \cite{Fil.MPhAG,Fil.UMJ,Fil.Sing-GN} or other known theorems on the global solvability of DAEs. In Section \ref{Sect-Example} we consider a series of simple examples that demonstrate the application of the obtained theorems to the study of semilinear DAEs. In Section \ref{Preliminaries}, a problem statement and some notations and definitions are given, and a system of  differential equations (DEs) and algebraic equations (AEs), equivalent to the DAE under consideration, is presented. To reduce the DAE to the system of DEs and AEs, the special block form of the characteristic operator pencil, the direct decompositions of spaces, which reduce the pencil to this block form, and the projectors onto the subspaces from these decompositions are used. The main results related to this block form and the associated direct decompositions of spaces and projectors are given in Appendix~\ref{Appendix}.

\emph{The following standard notation will be used}. By $\mathrm{L}(X,Y)$ we denote the space of continuous linear operators from $X$ to $Y$; $\mathrm{L}(X,X)=\mathrm{L}(X)$, and similarly, $C((a,b),(a,b))=C(a,b)$;  $I_X$ denotes the identity operator in the space $X$; $\Es A^{(-1)}$ denotes the semi-inverse operator of an operator $\Es A$ \,($A^{-1}$ denotes the inverse operator of $A$); $\mathcal R(A)$ is the range of an operator $A$; $\Ker(A)$ is the kernel of an operator $A$;
$\overline{D}$ is the closure of a set $D$;
$D^c$, where $D$ is a set in a space $X$, is the complement of $D$ relative to $X$, i.e., $D^c=X\setminus D$;
$\partial D$ is the boundary of a set $D$;
$L_1\dot+ L_2$ is the direct sum of the linear spaces $L_1$ and $L_2$;
 $\lin\{x_i\}_{i=0}^N$ is the linear span of a system $\{x_i\}_{i=0}^N$;
$X'$ is the conjugate space of $X$ (it is also called an adjoint or dual space);
$A^\T$ denotes the transposed operator (i.e., the adjoint operator acting in real linear spaces to which the transposed matrix correspond) or the transposed matrix; $\delta_{ij}$ is the Kronecker delta;
the symbol $<<$ means ``much less than'';
$\partial_x$ denotes the partial derivative $\frac{\partial}{\partial x}$; $\partial_{(x_1,x_2,...,x_n)}=\frac{\partial}{\partial(x_1,x_2,...,x_n)}= \left(\frac{\partial}{\partial x_1},\frac{\partial}{\partial x_2},...,\frac{\partial}{\partial x_n}\right)$; a dot~$\dot{ }$ denotes the time derivative $\frac{d}{dt}$.
Note that both $A\subset B$ and $A\subseteq$ mean that $A$ is a subset of $B$, i.e., $A$ can be a proper subset of $B$ ($A\ne B$) or be equal to $B$; if $A$ is a proper subset of $B$, we write $A\subsetneqq B$.

 \smallskip
Let $f\colon J\to Y$ where $J$ is an interval in $\R$ and $Y$ is a normed linear space. If $J=[a,b)$, $b\le+\infty$ \,($J=(a,b]$, $a\ge -\infty$), then the derivative of the function $f$ at the point $a$ (at the point $b$) is understood as the derivative on the right (on the left) at this point (see, e.g., \cite{Schwartz1}). If $f\colon [a,b)\to Y$ is continuously differentiable on $(a,b)$ and in addition the right-hand derivative (i.e., the derivative on the right) of $f$ exists at the point $a$ and is continuous from the right at $a$, then $f$ is said to belong $C^1([a,b),Y)$.

 \section{Problem statement. Preliminaries}\label{Preliminaries}

In this section, we give a problem statement and introduce some notations and definitions which will be used in what follows (cf. \cite{Fil.MPhAG,Fil.UMJ,Fil.Sing-GN,LaSal-Lef}).
 \smallskip

We consider a differential-algebraic equation (or degenerate differential equation) of the form
 \begin{equation}\label{DAE}
\frac{d}{dt}[Ax]+Bx=f(t,x),
 \end{equation}
where $f\in C(\mT\times D,\Rm)$, $\mT\subseteq [0,\infty)$ is an interval,  $D\subseteq \Rn$ is an open set, and $A,\, B\in \mathrm{L}(\Rn,\Rm)$ (or $A$, $B$ are $m\times n$-matrices). We use the initial condition
 \begin{equation}\label{ini}
x(t_0)=x_0.
 \end{equation}
In the work, the case when $m=n$ and the operator $A$ is, in general, degenerate (noninvertible) and the case when $m\ne n$ are studied.  In this paper we use the DAE terminology and therefore the equation \eqref{DAE} is called a \emph{semilinear differential-algebraic equation}.

The pencil $\lambda A+B$ of operators or matrices $A$, $B$ (where $\lambda$ is a complex parameter) is called \emph{regular} if $n=m=\rank(\lambda A+B)$ (or $n=m$ and $\det(\lambda A+B)\not \equiv 0$); otherwise, that is,  if  $n\ne m$ or $n=m$ and  $\rank(\lambda A+B)<n$, the pencil is called \emph{singular}, or \emph{nonregular}, or \emph{irregular}. Recall that the rank of the operator pencil $\lambda A+B$ is the dimension of its range. The rank of the matrix pencil $\lambda A+B$ equals the maximum number of its linearly independent columns or rows (i.e.,  the maximum number of columns or rows of the pencil that are linearly independent set of vectors for some $\lambda=\lambda_0$) and equals the largest among the orders of the pencil minors that do not vanish identically. Obviously, the ranks of the pencil of operators and the pencil of corresponding matrices coincide.

The pencil $\lambda A+B$, associated with the linear part $\frac{d}{dt}[Ax]+Bx$ of the DAE \eqref{DAE}, is called \emph{characteristic}.
If the characteristic pencil $\lambda A+B$ is singular (respectively, regular), then the DAE is called  \emph{singular} (respectively, \emph{regular}), or \emph{nonregular}, or \emph{irregular}.

The nonregular semilinear DAE \eqref{DAE} corresponds to an overdetermined system of differential and algebraic equations (i.e., the number of equations $m$ is greater than the number of unknowns $n$)  if $\rank(\lambda A+B)=n<m$, to an underdetermined system (i.e., the number of equations is less than the number of unknowns) if $\rank(\lambda A+B)=m<n$, and to a closed system (i.e., the number of equations is equal to the number of unknowns) if $n=m$ and  $\rank(\lambda A+B)<n$.

\smallskip
The function $x(t)$ is called a \emph{solution of the equation \eqref{DAE} on $[t_0,t_1)\subseteq \mT$} if $x\in C([t_0,t_1),D)$, $(Ax)\in C^1([t_0,t_1),\Rm)$ and $x(t)$ satisfies \eqref{DAE} on $[t_0,t_1)$. If the function $x(t)$ additionally satisfies the initial condition \eqref{ini}, then it is called a \emph{solution of the initial value problem} (\emph{IVP}) \eqref{DAE}, \eqref{ini}.

A solution $x(t)$ (of an equation or inequality) is called \emph{global} if it exists on the interval $[t_0,\infty)$ (where $t_0$ is a given initial value).

A solution $x(t)$ \emph{has a finite escape time} or \emph{is blow-up in finite time} and is called \emph{Lagrange unstable} if it exists on some finite interval $[t_0,T)$ and is unbounded, that is, there exists $T<\infty$ such that $\lim\limits_{t\to T-0} \|x(t)\|=+\infty$.\;
A solution $x(t)$ is called \emph{Lagrange stable} if it is global and bounded, that is, $x(t)$ exists on the interval  $[t_0,\infty)$ and $\sup\limits_{t\in [t_0,\infty)} \|x(t)\|<\infty$.

The DAE \eqref{DAE} is called \emph{Lagrange unstable} (respectively, \emph{Lagrange stable}) \emph{for the initial point $(t_0,x_0)$} if the solution of the IVP \eqref{DAE}, \eqref{ini} is Lagrange unstable (respectively, Lagrange stable)  for this initial point.
The DAE \eqref{DAE} is called \emph{Lagrange unstable} (respectively, \emph{Lagrange stable}) if each solution of the IVP \eqref{DAE}, \eqref{ini} is Lagrange unstable  (respectively, Lagrange stable).

A convex set containing a point $x_0\in X$ that is contained in a ball $\{x\in X\mid \|x-x_0\|\le\delta\}$ (where $\delta\ge 0$) or coincides with it will be called a \emph{neighborhood} of the point $x_0$ and will be denoted by $N_\delta(x_0):=V_\delta(x_0)$ (it is possible that $N_\delta(x_0)=\{x_0\}$; in this case the neighborhood is degenerate). A neighborhood of some point that is an open (respectively, closed) set will be called an \emph{open} (respectively, \emph{closed}) neighborhood. By $U_\delta(x_0)$ and $\overline{N_\delta(x_0)}$ we denote the open neighborhood and closed neighborhood, respectively. Note that $\overline{U_\delta(x_0)}$ denotes the closure of the open neighborhood $U_\delta(x_0)$ (accordingly, $\delta> 0$). Sometimes we will denote a neighborhood (respectively, open neighborhood, closed neighborhood) of the point $x_0$ simply by $N(x_0)$ (respectively, $U(x_0)$, $\overline{N(x_0)}$), without indicating the radius of the ball which contains it.

Also, if $t\in [a,b]$, $a,b\in \R$, $a\ne b$, then by an open neighborhood $U_\delta(a)$ of the point $a$ we mean a semi-open interval $[a,a+\delta)$, $0<\delta<b-a$, and, similarly, by an open neighborhood $U_\delta(b)$ we mean a semi-open interval $(b-\delta,b]$, $0<\delta<b-a$.

 \smallskip
Let $X,\, Y$ be a normed linear space, $M\subseteq X$, and $J\subset \R$ be an interval. We use these notations in the definitions given below.

A mapping $f(t,x)$ of a set $J\times M$ into $Y$ is said to \emph{satisfy locally a Lipschitz condition} (or to be \emph{locally Lipschitz continuous}) \emph{with respect to $x$ on} $J\times M$ if for each (fixed) $(t_*,x_*)\in J\times M$ there exist open neighborhoods $U(t_*)$, $\widetilde{U}(x_*)$ of the points $t_*$, $x_*$ and a constant $L\ge 0$ such that $\|f(t,x_1)-f(t,x_2)\|\le L\|x_1-x_2\|$ for any $t\in U(t_*)$, $x_1,x_2\in \widetilde{U}(x_*)$.

We denote by $\rho(M_1,M_2)=\inf\limits_{x_1\in M_1,\, x_2\in M_2}\|x_2-x_1\|$  the distance between the closed sets $M_1$, $M_2$ in $X$ ($\rho(x,M)=\inf\limits_{y\in M}\|x-y\|$ denotes the distance from the point $x\in X$ to the set $M$).

A function $W\in C(M,\R)$, where $M\ni 0$, is called \emph{positive definite} if $W(0)=0$ and $W(x)>0$ for all $x\in M\setminus \{0\}$. A function $V\in C(J\times M,\R)$ is called \emph{positive definite} if $V(t,0)\equiv 0$ and there exists a positive definite function $W\in C(M,\R)$ such that $V(t,x)\ge W(x)$ for all $t\in J$, $x\in M\setminus \{0\}$.

A scalar function $V\in C(Z,\R)$ is called \emph{positive} if $V(z)>0$  for all $z\in Z$, where $Z$ is a set in a normed linear space.

A positive function $v\in C^1([a,\infty),(0,\infty))$  satisfying a differential inequality ${\dot{v}\le \chi(t,v)}$ (or ${\dot{v}\ge \chi(t,v)}$\,), where $\chi\in C([a,\infty)\times (0,\infty),\R)$, on $[a,\infty)$ is called a \emph{positive solution} of the inequality on $[a,\infty)$.

  \subsection{Reduction of a singular (nonregular) DAE to a system of ordinary differential and algebraic equations}\label{ReductionSingDAE}

The special block form of the singular characteristic pencil $\lambda A+B$ and the direct decompositions of spaces, which reduce the pencil to this block form, are used to reduce the singular DAE \eqref{DAE} to the system of ordinary differential equations (ODEs) and algebraic equations. The direct decompositions of spaces generate the projectors onto the subspaces from these decompositions, and the converse is also true. The main results related to the mentioned block form of the singular pencil and the corresponding direct decompositions of spaces and projectors  are given in Appendix \ref{Appendix}.

Throughout the paper, it is assumed that the regular block $\lambda A_r+ B_r$ from \eqref{penc} is a \emph{regular pencil of index not higher than 1} (see the definition in Appendix \ref{Appendix}).

First, recall that the function $f(t,x)$ from \eqref{DAE} is defined on $\mT\times D$, where $D\subseteq \Rn$ is an open set, and introduce the direct decomposition of $D$, similar to the direct decomposition \eqref{Xssrr}. The pair $P_1$, $P_2$ and pair $S_1$, $S_2$ of mutually complementary projectors generate the decomposition of the set $D$ into the direct sum of subsets
 \begin{equation}\label{Dssrr}
D=D_{s_1}\dot+ D_{s_2}\dot + D_1\dot + D_2,\qquad  D_{s_i}=S_iD,\quad D_i=P_iD,\quad i=1,2.
 \end{equation}
Obviously, $D_{s_i}\subseteq X_{s_i}$, $D_i\subseteq X_i$ ($i=1,2$), where $X_{s_i}$, $X_i$ are defined in \eqref{ssr}, \eqref{rr} (see Appendix \ref{Appendix}), and $D_{s_i}$, $D_i$  are open sets.

By using the projectors \eqref{ProjS}, \eqref{ProjR} (see Appendix \ref{Appendix}), the singular semilinear DAE \eqref{DAE} is reduced to the equivalent system (cf. \cite{Fil.UMJ})
 \begin{align}
\frac{d}{dt} (A S_1 x) &=F_1 [f(t,x)-B x], \label{DAEsysProj1} \\
\frac{d}{dt} (A P_1 x) &=Q_1 [f(t,x)-B x], \label{DAEsysProj2} \\
0 &= Q_2 [f(t,x)-B x], \label{DAEsysProj3} \\
0 &= F_2 [f(t,x)-B x], \label{DAEsysProj4}
 \end{align}
where $F_1=F$, $F_2=0$ if $\rank(\lambda A+B)= m<n$, and $S_1=S$ ($S_2=0$) if $\rank(\lambda A+B)= n<m$ (see Appendix \ref{Appendix} for details). The results obtained in the paper will be formulated for the general case when $\rank(\lambda A+B) < n$ and $\rank(\lambda A+B) < m$. The properties of the projectors allow one to immediately obtain the corresponding results for the cases when $\rank(\lambda A+B)= m<n$ or $\rank(\lambda A+B)= n<m$.

The system \eqref{DAEsysProj1}--\eqref{DAEsysProj4} is equivalent to
 \begin{equation}\label{DAEsysExtOp}
\begin{split}
& \frac{d}{dt} (\Es A_{gen} x_{s_1})+ \Es B_{gen} x_{s_1} + \Es B_{und} x_{s_2} = F_1 f(t,x), \\
& \frac{d}{dt} (\Es A_1 x_{p_1})+ \Es B_1 x_{p_1} = Q_1 f(t,x),\\
& \Es B_2 x_{p_2} =Q_2 f(t,x), \\
& \Es B_{ov}x_{s_1}=F_2 f(t,x),
\end{split}
 \end{equation}
where the operators defined in \eqref{ABssExtend} and \eqref{ABrrExtend} are used and $x_{s_i}=S_i x\in D_{s_i}$, $x_{p_i}=P_i x\in D_i$, $i=1,2$, are the components of the vector $x\in D$ represented as \eqref{xsr}, i.e., ${x=x_{s_1}+x_{s_2}+x_{p_1}+x_{p_2}}$.
Note that the representation of $x$ in the form $x=x_{s_1}+x_{s_2}+x_{p_1}+x_{p_2}$ \eqref{xsr} is uniquely determined for each $x\in \Rn$.
Further, the system \eqref{DAEsysExtOp} can be reduced to the equivalent system (as in \cite{Fil.Sing-GN})
\begin{align} 
&  \dot{x}_{s_1} =\Es A_{gen}^{(-1)}\big(F_1 f(t,x)-\Es B_{gen} x_{s_1}-\Es B_{und} x_{s_2}\big),   \label{DAEsysExtDE1}\\
& \dot{x}_{p_1} =\Es A_1^{(-1)} \big(Q_1 f(t,x)-\Es B_1 x_{p_1}\big),    \label{DAEsysExtDE2}\\
& \Es B_2^{(-1)} Q_2 f(t,x)- x_{p_2} = 0,   \label{DAEsysExtAE1}\\
& F_2 f(t,x)-\Es B_{ov}x_{s_1} = 0,  \label{DAEsysExtAE2}
\end{align}
where $\Es A_{gen}^{(-1)}$, $\Es A_1^{(-1)}$, $\Es B_2^{(-1)}$ are the semi-inverse operators which can be found by using the relations \eqref{InvAgen}, \eqref{InvA1} and \eqref{InvB1} (see Appendix \ref{Appendix}), and $x_{s_i}\in D_{s_i}$, $x_{p_i}\in D_i$, $i=1,2$,  ${x=x_{s_1}+x_{s_2}+x_{p_1}+x_{p_2}}$ as above (recall that a dot~$\dot{ }$ denotes the time derivative $\frac{d}{dt}$).

In addition, the system \eqref{DAEsysExtOp} can be reduced to the equivalent system
 \begin{align}
& \dot{x}_{s_1} =A_{gen}^{-1} \big(F_1 f(t,x) - B_{gen} x_{s_1} - B_{und} x_{s_2}\big),   \label{DAEsysDE1}\\
& \dot{x}_{p_1} =A_1^{-1} \big(Q_1 f(t,x) - B_1 x_{p_1}\big),    \label{DAEsysDE2}\\
& B_2^{-1} Q_2 f(t,x) - x_{p_2} = 0,   \label{DAEsysAE1}\\
& F_2 f(t,x) - B_{ov} x_{s_1} = 0,  \label{DAEsysAE2}
 \end{align}
where the operators defined in Statement~\ref{STABssr} and in Remark~\ref{Rem-RegPenc}, \eqref{A_iB_i} (see Appendix \ref{Appendix}) are used (note that $A_{gen}^{-1}=\Es A_{gen}^{(-1)}\big|_{Y_{s_1}}$, $A_1^{-1}=\Es A_1^{(-1)}\big|_{Y_1}$, $B_2^{-1}=\Es B_2^{(-1)}\big|_{Y_2}$, $B_1=\Es B_1\big|_{X_1}$, $B_{gen}=\Es B_{gen}\big|_{X_{s_1}}$, $B_{und}=\Es B_{und}\big|_{X_{s_2}}$, $B_{ov}=\Es B_{ov}\big|_{X_{s_1}}$) and the projectors $F_i$, $Q_i$ on the subspaces $Y_{s_i}$, $Y_i$, $i=1,2$, are considered as the operators from $\Rm$ into $Y_{s_i}$,~$Y_i$, respectively, (i.e., $F_i\in\mathrm{L}(\Rm,Y_{s_i})$, $Q_i\in\mathrm{L}(\Rm,Y_i)$\,) that have the same projection properties as the projectors $F_i,Q_i\in\mathrm{L}(\Rm)$ defined in Appendix \ref{Appendix} (see \eqref{ProjS}, \eqref{ProjR}), that is, $F_i y=F_i y_{s_i}=y_{s_i}\in Y_{s_i}$  and $Q_i y=Q_i y_{p_i}=y_{p_i}\in Y_i$, $i=1,2$, for any $y\in \Rm$.
Here we consider the projectors $F_i$ and $Q_i$ as the operators belonging to $\mathrm{L}(\Rm,Y_{s_i})$ and $\mathrm{L}(\Rm,Y_i)$, $i=1,2$, since the spaces $Y_{s_1}$, $Y_1$, $Y_2$ are the domains of definition of the (induced) operators $A_{gen}^{-1}$, $A_1^{-1}$, $B_2^{-1}$ respectively, and in addition ${B_{ov}\in \mathrm{L}(X_{s_1},Y_{s_2})}$.
The described differences in the definitions of the projectors $F_i$, $Q_i$ are, in general, formal and become significant only in the transition from the operators to the corresponding matrices. Therefore, we keep the same notations for the projectors $F_i$, $Q_i$ ($i=1,2$) both for the case when $F_i,Q_i\in\mathrm{L}(\Rm)$ and for the case when they are considered as the operators $F_i\in\mathrm{L}(\Rm,Y_{s_i})$, $Q_i\in\mathrm{L}(\Rm,Y_i)$, $i=1,2$.

Note that we identify the representations of $x\in \Rn$ in the form $x=x_{s_1}+x_{s_2}+x_{p_1}+x_{p_2}$ (i.e., \eqref{xsr}) and in the form $x=(x_{s_1},x_{s_2},x_{p_1},x_{p_2})$, where $x_{s_i}\in X_{s_i}$ and $x_{p_i}\in X_i$, $i=1,2$, as indicated in Remark \ref{Rem-Correspond} (see Appendix \ref{Appendix}). The correspondence between these representations is established in Remark \ref{Rem-Correspond}.  In the equations \eqref{DAEsysDE1}--\eqref{DAEsysAE2}, $x_{s_i}=S_i x$ and $x_{p_i}=P_i x$ ($x\in D$, $x_{s_i}\in D_{s_i}$,  $x_{p_i}\in D_i$, $i=1,2$) as well as in the equations equivalent to them, and, taking into account the established correspondence between the representations of $x$, we set $x=x_{s_1}+x_{s_2}+x_{p_1}+x_{p_2}=(x_{s_1},x_{s_2},x_{p_1},x_{p_2})$ for convenience. Therefore, in what follows, we will sometimes omit appropriate explanations, assuming that for $x$ one representation can be replaced by another.
For clarity, note that the projectors $S_i\colon \Rn\to X_{s_i}$, $P_i\colon \Rn\to X_i$, $i=1,2$, (see \eqref{ProjS}, \eqref{ProjR} in Appendix \ref{Appendix}) are, in general, the operators in $\Rn$, and for the representation of $x\in \Rn$  as the sum $x=x_{s_1}+x_{s_2}+x_{p_1}+x_{p_2}$ (with respect to the direct sum $X_{s_1}\dot+ X_{s_2}\dot+X_1 \dot+X_2$), where $x_{s_i}=S_i x$ and $x_{p_i}=P_i x$, we consider these projectors as the operators $S_i, P_i\in \mathrm{L}(\Rn)$, $i=1,2$, however, for the representation of $x$ as the ordered collection (column vector) $x=(x_{s_1},x_{s_2},x_{p_1},x_{p_2})$ (with respect to the direct product $X_{s_1}\times X_{s_2}\times X_1 \times X_2$) we consider these projectors as the operators $S_i\in \mathrm{L}(\Rn,X_{s_i})$, $P_i\in \mathrm{L}(\Rn,X_i)$, $i=1,2$, with the preservation of their projection properties (i.e., $S_i x=S_i x_{s_i}=x_{s_i}\in X_{s_i}$  and $P_i x=P_i x_{p_i}=x_{p_i}\in X_i$, $i=1,2$, for any $x\in \Rn$). Since the mentioned differences in the definitions of the projectors $S_i$, $P_i$ are formal and become significant only in the transition from the operators to the corresponding matrices, we keep the same notations for the projectors $S_i$, $P_i$ in both cases.

Below, when proving the theorems on global solvability, we use the norm $\|\cdot\|$ in $X_{s_1}\dot+ X_{s_2}\dot+X_1 \dot+X_2$, defined by
$\|x\|=\|x_{s_1}\|+\|x_{s_2}\|+ \|x_{p_1}\|+\|x_{p_2}\|$ where $\|x_{s_1}\|=\|x_{s_1}\|_{X_{s_1}}$, $\|x_{s_2}\|=\|x_{s_2}\|_{X_{s_2}}$, $\|x_{p_1}\|=\|x_{p_1}\|_{X_1}$ and $\|x_{p_2}\|=\|x_{p_2}\|_{X_2}$ denote the norms of the components $x_{s_1}$, $x_{s_2}$, $x_{p_1}$ and $x_{p_2}$ in the subspaces $X_{s_1}$, $X_{s_2}$, $X_1$ and $X_2$, respectively. Taking into account the established correspondence $x=x_{s_1}+x_{s_2}+x_{p_1}+x_{p_2}= (x_{s_1},x_{s_2},x_{p_1},x_{p_2})$, the norm $\|x\|$ of $x\in X_{s_1}\times X_{s_2}\times X_1\times X_2$ is defined in the same way and it coincides with the above-defined norm of the corresponding element $x\in X_{s_1}\dot+ X_{s_2}\dot+X_1 \dot+X_2$. Similarly, in $\R\times X_{s_1}\times X_{s_2}\times X_1\times X_2$ or $\R\times \Rn$ we use the norm $\|(t,x)\|=\|t\|+\|x_{s_1}\|+\|x_{s_2}\|+ \|x_{p_1}\|+\|x_{p_2}\|$. Generally, the inequality $\|x\|_\Rn\le \|x_{s_1}\|_\Rn+\|x_{s_2}\|_\Rn+ \|x_{p_1}\|_\Rn+\|x_{p_2}\|_\Rn$ holds for any norm $\|\cdot\|_\Rn$ in $\Rn=X_{s_1}\dot+ X_{s_2}\dot+X_1 \dot+X_2$ due to the representation \eqref{xsr}, and in this sense the chosen norm is ``maximal''.

It is shown above that the singular semilinear DAE \eqref{DAE} can be reduced to the equivalent system \eqref{DAEsysExtDE1}--\eqref{DAEsysExtAE2} or \eqref{DAEsysDE1}--\eqref{DAEsysAE2}.
Note that the derivative $\dot{V}_{\eqref{DAEsysExtDE1},\eqref{DAEsysExtDE2}}$ of a scalar function $V\in C^1(\mT\times K_{s1},\R)$, where $K_{s1}\subseteq D_{s_1}\times D_1$ is an open set, along the trajectories of the equations  \eqref{DAEsysExtDE1}, \eqref{DAEsysExtDE2} has the form (cf. \cite{Fil.Sing-GN})
 \begin{align}
&\dot{V}_{\eqref{DAEsysExtDE1},\eqref{DAEsysExtDE2}}(t,x_{s_1},x_{p_1}) = \partial _t V(t,x_{s_1},x_{p_1})+ \partial_{(x_{s_1},x_{p_1})} V(t,x_{s_1},x_{p_1})\cdot \Upsilon(t,x_{s_1},x_{s_2},x_{p_1},x_{p_2}) =   \nonumber\\
& =\partial _t V(t,x_{s_1},x_{p_1})+\partial_{x_{s_1}} V(t,x_{s_1},x_{p_1})\cdot  \left[\Es A_{gen}^{(-1)}\big(F_1 f(t,x)-\Es B_{gen} x_{s_1}-\Es B_{und} x_{s_2}\big)\right] +  \nonumber\\
& +\partial_{x_{p_1}} V(t,x_{s_1},x_{p_1})\cdot \left[\Es A_1^{(-1)} \big(Q_1 f(t,x)-\Es B_1 x_{p_1}\big)\right],  \label{dVDAEsing}\\
& \Upsilon(t,x_{s_1},x_{s_2},x_{p_1},x_{p_2})=\begin{pmatrix}
\Es A_{gen}^{(-1)}\big(F_1 f(t,x)-\Es B_{gen} x_{s_1}-\Es B_{und} x_{s_2}\big) \\
\Es A_1^{(-1)} \big(Q_1 f(t,x)-\Es B_1 x_{p_1}\big) \end{pmatrix}, \label{Upsilon}
 \end{align}
where $x=x_{s_1}+x_{s_2}+x_{p_1}+x_{p_2}$ \,($x_{s_i}=S_i x$, $x_{p_i}=P_i x$, $i=1,2$),  $(x_{s_1},x_{p_1})\in K_{s1}$, $x_{s_2}\in D_{s_2}$, $x_{p_2}\in D_2$.

  \subsection{Reduction of a regular semilinear DAE to a system of ordinary differential and algebraic equations}

Let the DAE \eqref{DAE} be regular, i.e., the characteristic pencil $\lambda A+B$ be regular ($n=m=\rank(\lambda A+B)$).
In this case, the pair $P_1$, $P_2$ of mutually complementary projectors (see Appendix \ref{Appendix} and, in particular, Remark \ref{Rem-RegCase})  generate the decomposition of the open set $D$ into the direct sum of subsets
 \begin{equation}\label{Drr}
D=D_1\dot + D_2,\qquad D_i=P_iD,\quad i=1,2,
 \end{equation}
where $D_i\subseteq X_i$ ($i=1,2$) are open sets.

We assume that the regular pencil $\lambda A+B$ has index not higher than 1 (see Appendix \ref{Appendix}). Then the regular semilinear DAE \eqref{DAE} can be reduced to the equivalent system of the form \eqref{DAEsysProj2},~\eqref{DAEsysProj3}, i.e.,
\begin{align}
 \frac{d}{dt} (A P_1 x) &=Q_1 [f(t,x)-B x], \label{DAEsysProjReg1} \\
 0 &=Q_2 [f(t,x)-B x]. \label{DAEsysProjReg2}
\end{align}
Further, the system \eqref{DAEsysProjReg1}, \eqref{DAEsysProjReg2} can be reduced to the equivalent system \eqref{DAEsysExtDE2},~\eqref{DAEsysExtAE1}, i.e.,
\begin{align}
& \dot{x}_{p_1}=\Es A_1^{(-1)} \big(Q_1 f(t,x)-\Es B_1 x_{p_1}\big),    \label{DAEsysExtReg1}\\ 
& \Es B_2^{(-1)} Q_2 f(t,x)- x_{p_2}=0,   \label{DAEsysExtReg2} 
\end{align}
or to the equivalent system (as in \cite{Fil.MPhAG})
 \begin{align}
& \dot{x}_{p_1}=G^{-1}\big(Q_1 f(t,x)-Bx_{p_1}\big), \label{DAEsysReg1G}  \\
& G^{-1}Q_2 f(t,x)-x_{p_2}=0, \label{DAEsysReg2G}
 \end{align}
where $x_{p_i}=P_i x\in D_i$, $i=1,2$, $x=x_{p_1}+x_{p_2}$ (see \eqref{xrr}), and $G$ is the operator \eqref{Proj.G} (see Appendix \ref{Appendix}). Note that $\Es A_1^{(-1)}Q_i=G^{-1}Q_i$, $i=1,2$,  $\Es B_2^{(-1)} Q_2=G^{-1}Q_2$ and $Bx_{p_1}=BP_1 x_{p_1}=\Es B_1x_{p_1}$.

In addition, the system \eqref{DAEsysProjReg1}, \eqref{DAEsysProjReg2} can be reduced to the equivalent system \eqref{DAEsysDE2}, \eqref{DAEsysAE1}, i.e.,
 \begin{align}
& \dot{x}_{p_1} =A_1^{-1} \big(Q_1 f(t,x) - B_1 x_{p_1}\big),    \label{DAEsysReg1}\\
& B_2^{-1}Q_2 f(t,x)-x_{p_2} = 0.   \label{DAEsysReg2}
 \end{align}
where the projectors $Q_i$ on the subspaces $Y_i$, $i=1,2$, are considered as the operators $Q_i\in\mathrm{L}(\Rn,Y_i)$ that have the same projection properties as the projectors $Q_i$ (by definition, $Q_i\in\mathrm{L}(\Rn)$) defined in Appendix \ref{Appendix} (see \eqref{ProjR} and Remark~\ref{Rem-Correspond}), that is, $Q_i y=Q_i y_{p_i}=y_{p_i}\in Y_i$, $i=1,2$, for any $y\in \Rn$. A similar remark regarding projectors is given for the system \eqref{DAEsysDE1}--\eqref{DAEsysAE2}, and in general, for the regular DAE \eqref{DAE}, remarks similar to those given in Section \ref{ReductionSingDAE} for the singular DAE hold.

Thus, the regular semilinear DAE \eqref{DAE} can be reduced to the equivalent system \eqref{DAEsysExtReg1}--\eqref{DAEsysExtReg2}, or \eqref{DAEsysReg1G}--\eqref{DAEsysReg2G}, or \eqref{DAEsysReg1}--\eqref{DAEsysReg2}.

Note that the derivative $\dot{V}_\eqref{DAEsysExtReg1}$ of a scalar function $V\in C^1(\mT\times K_1,\R)$, where $K_1\subset D_1$ is an open set ($D_1$ is defined in \eqref{Drr}), along the trajectories of the equation \eqref{DAEsysExtReg1} has the form
 \begin{align}
\dot{V}_\eqref{DAEsysExtReg1}(t,x_{p_1}) & =\partial _t V(t,x_{p_1})+ \partial_{x_{p_1}} V(t,x_{p_1})\cdot \Pi(t,x_{p_1},x_{p_2}), \label{dVDAEreg} \\
& \Pi(t,x_{p_1},x_{p_2})=\Es A_1^{(-1)}\big(Q_1 f(t,x_{p_1}+x_{p_2})-\Es B_1 x_{p_1}\big). \label{Pi}
 \end{align}
Since $\Pi(t,x_{p_1},x_{p_2})$ can be rewritten in the form $\Pi(t,x_{p_1},x_{p_2})=G^{-1}\big(Q_1 f(t,x_{p_1}+x_{p_2})-Bx_{p_1}\big)$, then $\dot{V}_\eqref{DAEsysExtReg1}=\dot{V}_\eqref{DAEsysReg1G}$ is the derivative of the function $V$ along the trajectories of the equation \eqref{DAEsysReg1G} as well. Generally, in the case when the DAE \eqref{DAE} is regular, we can set $S_i=F_i=0$, $i=1,2$, and then the derivatives \eqref{dVDAEsing} and \eqref{dVDAEreg} are equivalent.

  \subsection{Consistency condition}

Consider the manifold (cf. \cite{Fil.UMJ}) associated with the \emph{singular} semilinear DAE \eqref{DAE}:
 \begin{equation}\label{L_tSing}
L_{t_*}=\{(t,x)\in [t_*,\infty)\times \Rn\mid (F_2+Q_2)[f(t,x)-Bx]=0\},
 \end{equation}
where ${t_*\in \mT}$.  It can be represented as $L_{t_*}=\!\{(t,x)\!\in\! [t_*,\infty)\times \Rn \mid F_2[f(t,x)-Bx]=0,\; Q_2[f(t,x)-Bx]=0\}$ or
 $$
L_{t_*}=\{(t,x)\in [t_*,\infty)\times \Rn \mid \text{$(t,x)$ satisfies the equations \eqref{DAEsysExtAE1}, \eqref{DAEsysExtAE2}}\,\}.
 $$
Thus, a point $(t,x)\in \mT\times D$  belongs to $L_{t_*}$ if and only if it satisfies the equations \eqref{DAEsysExtAE1}, \eqref{DAEsysExtAE2} or the equivalent ones.

Also, we consider the manifold (cf. \cite{Fil.MPhAG}) associated with the \emph{regular} semilinear DAE \eqref{DAE}:
 \begin{equation}\label{L_treg}
L_{t_*}=\{(t,x)\in [t_*,\infty)\times \Rn \mid Q_2[f(t,x)-Bx]=0\},
 \end{equation}
where ${t_*\in \mT}$, which can be represented as
$$
L_{t_*}=\{(t,x)\in [t_*,\infty)\times \Rn \mid \text{$(t,x)$ satisfies the equation \eqref{DAEsysExtReg2}}\}
$$
(recall that the equations \eqref{DAEsysExtReg2} and \eqref{DAEsysExtAE1} are the same).  If the DAE \eqref{DAE} is regular, then we can set $S_i=F_i=0$, $i=1,2$, and reduce the manifold \eqref{L_tSing} to  \eqref{L_treg}.

Thus, we do not change the notation of the manifold $L_{t_*}$ in the singular and regular cases, which allows us to avoid excess notation. It will be clear from the context what exactly is meant.

The initial values $t_0$, $x_0$ satisfying the \emph{consistency condition} $(t_0,x_0)\in L_{t_*}$, where $t_*\le t_0$, $t_*\in \mT$, are called \emph{consistent} (the initial point $(t_0,x_0)\in L_{t_*}$, where $t_*\le t_0$,   is called \emph{consistent}). In general, the point $(t,x)$ (the values $t$, $x$) satisfying the \emph{consistency condition} $(t,x)\in L_{t_*}$, where $t_*\in \mT$, we will call \emph{consistent}.

Obviously, an initial point $(t_0,x_0)$ for a solution of the IVP \eqref{DAE}, \eqref{ini} must belong to the manifold $L_{t_0}$ and, generally, the graph of the solution (i.e., the set of points $(t,x(t))$, where $t$ from the domain of definition of the solution $x(t)$) must lie in this manifold.

  \section{Global solvability}\label{SectGlobSolv}

  \subsection{Global solvability of regular semilinear DAEs}\label{SectGlobReg}

For a clearer presentation of the results we will first consider a regular semilinear DAE.

Below, the projectors and subspaces, described in Appendix \ref{Appendix}, as well as the definitions and constructions, given in Section \ref{Preliminaries}, are used. Recall that $D_i=P_iD$, $i=1,2$ (see \eqref{Drr}).

 We first formulate all statements (theorems and corollaries) and then prove them.
 \begin{theorem}\label{Th_GlobReg-LipschObl}
Let $f\in C(\mT\times D,\Rn)$, where $D\subseteq \Rn$ is some open set and $\mT=[t_+,\infty)\subseteq [0,\infty)$, and let the operator pencil $\lambda A+B$ be a regular pencil of index not higher than 1. Assume that there exists an open set $M_1\subseteq D_1$ and a set $M_2\subseteq D_2$ such that the following holds:
 \begin{enumerate}[label={\upshape\arabic*.}, ref={\upshape\arabic*}, itemsep=3pt,parsep=0pt,topsep=4pt,leftmargin=0.6cm]
\item\label{SoglReg} For any fixed  ${t\in \mT}$, ${x_{p_1}\in M_1}$ there exists a unique ${x_{p_2}\in M_2}$ such that ${(t,x_{p_1}+x_{p_2})\in L_{t_+}}$ \textup{(}the manifold $L_{t_+}$ has the form \eqref{L_treg} where $t_*=t_+$\textup{)}.

\item\label{InvReg-Lipsch} A function $f(t,x)$ satisfies locally a Lipschitz condition with respect to $x$ on $\mT\times D$.\; For~any fixed $t_*\in \mT$, ${x_*=x_{p_1}^*+x_{p_2}^*}$ \,\textup{(}${x_{p_i}^*=P_ix_*}$, ${i=1,2}$\textup{)}\, such that $x_{p_1}^*\in M_1$, $x_{p_2}^*\in M_2$ and ${(t_*,x_*)\in L_{t_+}}$, there exist open neighborhoods $U_\delta(t_*,x_{p_1}^*)= U_{\delta_1}(t_*)\times U_{\delta_2}(x_{p_1}^*)\subset \mT\times D_1$ and $U_\varepsilon(x_{p_2}^*)\subset D_2$  \,\textup{(}the numbers $\delta, \varepsilon>0$ depend on the choice of $t_*$, $x_*$\textup{)} and an invertible operator $\Phi_{t_*,x_*}\in \mathrm{L}(X_2,Y_2)$ such that for each $(t,x_{p_1})\in U_\delta(t_*,x_{p_1}^*)$ and each $x_{p_2}^1,\, x_{p_2}^2\in U_\varepsilon(x_{p_2}^*)$ the mapping
     \begin{equation}\label{tildeFReg}
    \widetilde{F}(t,x_{p_1},x_{p_2}):= Q_2f(t,x_{p_1}+x_{p_2})-B\big|_{X_2}x_{p_2}\colon \mT\times D_1\times D_2\to Y_2
     \end{equation}
   satisfies the inequality
     \begin{equation}\label{ContractiveMapReg}
   \|\widetilde{F}(t,x_{p_1},x_{p_2}^1)- \widetilde{F}(t,x_{p_1},x_{p_2}^2)- \Phi_{t_*,x_*} [x_{p_2}^1-x_{p_2}^2]\|\le q(\delta,\varepsilon)\|x_{p_2}^1-x_{p_2}^2\|,
     \end{equation}
    where $q(\delta,\varepsilon)$ is such that $\lim\limits_{\delta,\,\varepsilon\to 0} q(\delta,\varepsilon)<\|\Phi_{t_*,x_*}^{-1}\|^{-1}$.

\item\label{RegAttractor}  If $M_1\neq X_1$, then the following holds.

  The component $x_{p_1}(t)=P_1x(t)$ of each solution $x(t)$ with the initial point $(t_0,x_0)\in L_{t_+}$, for which $P_ix_0\in M_i$, $i=1,2$, can never leave $M_1$ \textup{(}i.e., it remains in $M_1$ for all $t$ from the maximal interval of existence of the solution\textup{)}.

\item\label{ExtensReg}  If $M_1$ is unbounded, then the following holds.

  There exists a number ${R>0}$ \,\textup{(}$R$ can be sufficiently large\textup{)}, a function ${V\in C^1\big(\mT\times M_R,\R\big)}$ positive on $\mT\times M_R$, where $M_R=\{x_{p_1}\in M_1\mid \|x_{p_1}\|> R\}$, and a function ${\chi\in C(\mT\times (0,\infty),\R)}$  such that:
   \begin{enumerate}[label={\upshape(\ref{ExtensReg}.\alph*)}, ref={(\ref{ExtensReg}.\alph*)},itemsep=2pt,parsep=0pt,topsep=2pt]
  \item\label{ExtensReg1}
   $\lim\limits_{\|x_{p_1}\|\to+\infty}V(t,x_{p_1})=+\infty$ uniformly in $t$ on each finite interval $[a,b)\subset \mT$;

  \item\label{ExtensReg2}
  for each $t\in \mT$, $x_{p_1}\in M_R$, $x_{p_2}\in M_2$  such that $(t,x_{p_1}+x_{p_2})\in L_{t_+}$, the derivative \eqref{dVDAEreg} of the function $V$ along the trajectories of the equation \eqref{DAEsysExtReg1} satisfies the inequality
   \begin{equation}\label{LagrDAEReg}
  \dot{V}_\eqref{DAEsysExtReg1}(t,x_{p_1})\le \chi\big(t,V(t,x_{p_1})\big);
   \end{equation}

  \item\label{GlobSolvReg}
   the differential inequality
    \begin{equation}\label{L1v}
     \dot{v}\le \chi(t,v)\qquad (t\in \mT)
    \end{equation}
   does not have positive solutions with finite escape time.
   \end{enumerate}
 \end{enumerate}
Then there exists a unique global \textup{(}i.e., on $[t_0,\infty)$\textup{)} solution of the IVP \eqref{DAE}, \eqref{ini} for each initial point $(t_0,x_0)\in L_{t_+}$ for which $P_ix_0\in M_i$, $i=1,2$.
 \end{theorem}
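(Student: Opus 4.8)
The plan is to exploit the equivalence, established in Section~\ref{Preliminaries}, between the regular DAE \eqref{DAE} and the reduced system \eqref{DAEsysExtReg1}, \eqref{DAEsysExtReg2}: the differential equation \eqref{DAEsysExtReg1} for the component $x_{p_1}=P_1x$ coupled with the algebraic equation \eqref{DAEsysExtReg2} (equivalently $\widetilde{F}(t,x_{p_1},x_{p_2})=0$ for the mapping \eqref{tildeFReg}) for the component $x_{p_2}=P_2x$. First I would solve the algebraic equation for $x_{p_2}$ as a function of $(t,x_{p_1})$, substitute the result into \eqref{DAEsysExtReg1} to obtain a genuine ordinary differential equation in $x_{p_1}$ alone, apply the Picard--Lindel\"of theorem to get a unique local solution, and finally show that this local solution extends to all of $[t_0,\infty)$.

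The first step is \emph{local} existence and uniqueness. Fixing a consistent initial point $(t_0,x_0)\in L_{t_+}$ with $P_ix_0\in M_i$, I would use condition~\ref{InvReg-Lipsch} to solve $\widetilde{F}(t,x_{p_1},x_{p_2})=0$ for $x_{p_2}$ near such a point by the contraction mapping principle: rewriting the equation as the fixed-point problem $x_{p_2}=x_{p_2}-\Phi_{t_*,x_*}^{-1}\widetilde{F}(t,x_{p_1},x_{p_2})$, the inequality \eqref{ContractiveMapReg} gives the Lipschitz constant $\|\Phi_{t_*,x_*}^{-1}\|\,q(\delta,\varepsilon)$, which is strictly less than $1$ once $\delta,\varepsilon$ are taken small (by the limit condition on $q$), so the map is a contraction on a suitable closed ball inside $U_\varepsilon(x_{p_2}^*)$. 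This yields a locally defined solution $x_{p_2}=\varphi(t,x_{p_1})$; condition~\ref{SoglReg} guarantees that this is the \emph{unique} solution lying in $M_2$, so the locally constructed pieces agree and $\varphi\colon\mT\times M_1\to M_2$ is single-valued, while the local Lipschitz continuity of $f$ together with the uniform contraction estimate makes $\varphi$ locally Lipschitz in $(t,x_{p_1})$. Substituting $x_{p_2}=\varphi(t,x_{p_1})$ into \eqref{DAEsysExtReg1} produces the ODE $\dot{x}_{p_1}=\Es A_1^{(-1)}\big(Q_1f(t,x_{p_1}+\varphi(t,x_{p_1}))-\Es B_1 x_{p_1}\big)$ with locally Lipschitz right-hand side on $\mT\times M_1$; Picard--Lindel\"of then delivers a unique maximal solution $x_{p_1}(t)$ on some interval $[t_0,T)$, and, by the equivalence, $(x_{p_1}(t),\varphi(t,x_{p_1}(t)))$ is the unique maximal solution of the IVP \eqref{DAE}, \eqref{ini}.

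It then remains to prove $T=\infty$, which I would do by contradiction, assuming $T<\infty$ and invoking the continuation principle: a maximal solution of an ODE on the open domain $\mT\times M_1$ with $T<\infty$ must leave every compact subset of $M_1$, so either $x_{p_1}(t)$ approaches $\partial M_1$ or $\|x_{p_1}(t)\|\to+\infty$ as $t\to T-0$. Condition~\ref{RegAttractor} (invoked when $M_1\ne X_1$) excludes the first alternative, since the $x_{p_1}$-component never leaves $M_1$; when $M_1=X_1$ there is no boundary to reach. To exclude the second alternative I would use condition~\ref{ExtensReg} (relevant precisely when $M_1$ is unbounded; if $M_1$ is bounded the second alternative cannot occur): once $\|x_{p_1}(t)\|>R$ the trajectory lies in $M_R$, where $V$ is defined, and by \eqref{LagrDAEReg} the function $v(t):=V(t,x_{p_1}(t))$ satisfies the differential inequality \eqref{L1v}. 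By a comparison (Chaplygin-type) theorem, $v(t)$ is bounded above by a positive solution of \eqref{L1v}, which by condition~\ref{GlobSolvReg} has no finite escape time and hence stays finite on $[t_0,T)$; the properness \ref{ExtensReg1} of $V$ then forces $\|x_{p_1}(t)\|$ to stay bounded on $[t_0,T)$, contradicting $\|x_{p_1}(t)\|\to+\infty$. Thus neither alternative holds, so $T=\infty$ and the solution is global, with uniqueness inherited from the uniqueness of the ODE solution and of $\varphi$.

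The main obstacle I anticipate is the local step: turning the purely \emph{local} contraction estimate \eqref{ContractiveMapReg} together with the \emph{global} existence--uniqueness hypothesis~\ref{SoglReg} into a single globally defined, locally Lipschitz implicit map $\varphi$ on $\mT\times M_1$ whose graph lies in $L_{t_+}$, and verifying that the substitution is legitimate so that solutions of the reduced ODE correspond \emph{exactly} to solutions of the DAE. The continuation step is then largely bookkeeping, but one must handle with care the case distinction ($M_1=X_1$ versus $M_1\ne X_1$, bounded versus unbounded $M_1$) so that conditions \ref{RegAttractor} and \ref{ExtensReg} together rule out \emph{every} mode of finite-time breakdown.
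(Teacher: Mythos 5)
Your proposal is correct and follows essentially the same route as the paper's proof: a contraction-mapping implicit function (the paper's Lemma \ref{Lemma-ImplicitFuncReg}, based on the fixed-point form \eqref{DAEsysReg2equiv2}) glued into a single map $\eta\colon \mT\times M_1\to M_2$ via condition \ref{SoglReg}, substitution into \eqref{DAEsysExtReg1} to obtain the reduced ODE \eqref{DAEsysExtReg1eta}, and then continuation, with condition \ref{RegAttractor} excluding escape through $\partial M_1$ and condition \ref{ExtensReg} excluding finite-time norm blow-up by the Lyapunov comparison argument (the paper invokes \cite{LaSal-Lef}, Ch.~IV, Thm.~XIII). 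The only difference is organizational: where you argue by contradiction via the continuation principle plus a comparison theorem, the paper first extends the vector field $\widetilde{\Pi}$ to $\overline{M_1}$ whenever possible (so that a trajectory converging to $\partial M_1$ in finite time would actually reach and leave $M_1$, which is what makes condition \ref{RegAttractor} bite on that alternative) and then runs a case analysis on the shape of $M_1$ ($M_1=X_1$; $M_1^c$ bounded; $M_1$ and $M_1^c$ both unbounded; $M_1$ bounded) -- the same mathematical content in a different arrangement.
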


 \begin{corollary}\label{Coroll-GlobReg1}
Theorem~\ref{Th_GlobReg-LipschObl} remains valid if condition \ref{RegAttractor} is replaced by
\begin{enumerate}[label={\upshape\arabic*.}, ref={\upshape\arabic*}, itemsep=3pt,parsep=0pt,topsep=4pt,leftmargin=0.6cm]
 \addtocounter{enumi}{2}
\item\label{RegAttractCoroll} In the case when $M_1\neq X_1$, the following holds.

Let $D_{\Pi,i}\subseteq X_i$, $i=1,2$, be sets such that the function $\Pi$ of the form \eqref{Pi} is defined and continuous for all $t\in \mT$ and $x_{p_i}\in D_{\Pi,i}$, $i=1,2$, where $D_{\Pi,i}\supset D_i$, $D_{\Pi,1}\times D_{\Pi,2}\ne D_1\times D_2$, if the domain of definition of $\Pi$ can be extended to $\mT\times D_{\Pi,1}\times D_{\Pi,2}$ in this way, and $D_{\Pi,i}= D_i$ otherwise. 
Let $D_{c,i}\subseteq X_i$, $i=1,2$, be sets such that for any fixed ${t\in \mT}$, ${x_{p_1}\in D_{c,1}\supset M_1}$ there exists a unique $x_{p_2}\in D_{c,2}\supset M_2$ such that ${(t,x_{p_1}+x_{p_2})\in L_{t_+}}$ \textup{(}i.e., $(t,x_{p_1}+x_{p_2})$ satisfies \eqref{DAEsysProjReg2}\,\textup{)} and $D_{c,1}\times D_{c,2}\ne M_1\times M_2$ if such sets exist, and $D_{c,i}= M_i$ otherwise.
Further, let $\widetilde{D_i}:= D_{\Pi,i}\cap D_{c,i}$, $i=1,2$, if the function $\Pi$ depends on $x_{p_2}$, and $\widetilde{D_1}:= D_{\Pi,1}$ if $\Pi$ does not depend on $x_{p_2}$.

Below, $\Pi$ is considered as the function \eqref{Pi} with the domain of definition $\mT\times \widetilde{D_1}\times \widetilde{D_2}$ if it depends on $x_{p_2}$ and $\mT\times \widetilde{D_1}$ if it does not depend on $x_{p_2}$.

Assume that there exists a function $W\in C(\mT\times X_1,\R)$ and for each sufficiently small number ${r>0}$  there exists a closed set $K_r=\{x_{p_1}\in M_1\mid \rho(K_r,M_1^c)=r\}$  \,\textup{(}${M_1^c=X_1\setminus M_1}$,\, $\rho(K_r,M_1^c)=\! \inf\limits_{k\,\in\, K_r,\, m\,\in\, M_1^c} \|m-k\|$\,\textup{)} such that
 $$
W(t_1,x_{p_1}^1)<W(t_2,x_{p_1}^2)
 $$
for every $x_{p_1}^1\in K_r$, $x_{p_1}^2\in M_1^c\cap \widetilde{D_1}$ and $t_1,t_2\in \mT$ such that $t_1\le t_2$, and, in addition, $W(t,x_{p_1})$ has the continuous partial derivatives on $\mT\times K_r^c$ \,\textup{(}${K_r^c=X_1\setminus K_r}$\textup{)} and the inequality
\begin{equation}\label{RegAttractIneq}
\dot{W}_\eqref{DAEsysExtReg1}(t,x_{p_1})=\partial_t W(t,x_{p_1})+ \partial_{x_{p_1}} W(t,x_{p_1})\cdot \Pi(t,x_{p_1},x_{p_2})\le 0
\end{equation}
holds for each $t\in \mT$, $x_{p_1}\in K_r^c\cap \widetilde{D_1}$, $x_{p_2}\in \widetilde{D_2}$ such that ${(t,x_{p_1}+x_{p_2})\in L_{t_+}}$ \textup{(}if $\Pi$ does not depend on $x_{p_2}$, then \eqref{RegAttractIneq} holds for each $t\in \mT$, $x_{p_1}\in K_r^c\cap \widetilde{D_1}$\textup{)}.
\end{enumerate}
 \end{corollary}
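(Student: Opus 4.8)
The plan is to show that the new hypothesis \ref{RegAttractCoroll} implies the original invariance hypothesis \ref{RegAttractor} of Theorem~\ref{Th_GlobReg-LipschObl}; since the remaining hypotheses \ref{SoglReg}, \ref{InvReg-Lipsch} and \ref{ExtensReg} are left untouched, the assertion of the corollary then follows verbatim from the theorem. Thus the whole content of the corollary reduces to the claim that, under \ref{RegAttractCoroll}, the component $x_{p_1}(t)=P_1x(t)$ of any solution $x(t)$ with $(t_0,x_0)\in L_{t_+}$ and $P_ix_0\in M_i$ ($i=1,2$) cannot leave the open set $M_1$ on its maximal interval of existence. The function $W$, together with the shells $K_r$ at distance $r$ from $M_1^c$, plays the role of a barrier (a Lyapunov-type function adapted to $\partial M_1$ rather than to infinity), and the argument I would run is a comparison/first-exit argument.

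First I would recall that the graph of any solution stays on the manifold $L_{t_+}$, so the pair $(t,x_{p_1}(t)+x_{p_2}(t))$ always satisfies the consistency relation; consequently the differential inequality \eqref{RegAttractIneq} is applicable at every point of the trajectory at which $x_{p_1}(t)\in K_r^c\cap\widetilde{D_1}$, because the extended domains $\widetilde{D_i}=D_{\Pi,i}\cap D_{c,i}$ are designed precisely so that $\Pi$ of the form \eqref{Pi} and the consistency condition remain available on a portion of $M_1^c$ adjacent to $\partial M_1$. I would also observe that $\dot{W}_\eqref{DAEsysExtReg1}$ in \eqref{RegAttractIneq} is exactly the time derivative of $t\mapsto W(t,x_{p_1}(t))$ along \eqref{DAEsysExtReg1}, so $\dot{W}_\eqref{DAEsysExtReg1}\le 0$ forces $t\mapsto W(t,x_{p_1}(t))$ to be nonincreasing while the trajectory lies in $K_r^c\cap\widetilde{D_1}$.

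The core step is the following contradiction argument. Suppose some solution with $x_{p_1}(t_0)\in M_1$ leaves $M_1$; since $M_1$ is open, by continuity of $x_{p_1}(\cdot)$ there is a time $t_2$ with $x_{p_1}(t_2)\in M_1^c$, while $\rho(x_{p_1}(t_0),M_1^c)>0$. Choosing $r>0$ smaller than this initial distance, the trajectory starts at distance $>r$ from $M_1^c$ and ends in $M_1^c$, so by continuity of $t\mapsto\rho(x_{p_1}(t),M_1^c)$ it must meet the shell $K_r$; let $t_1\le t_2$ be the last such crossing, so that $x_{p_1}(t)\in K_r^c\cap\widetilde{D_1}$ for all $t\in(t_1,t_2]$. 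The monotonicity from \eqref{RegAttractIneq} then yields $W(t_2,x_{p_1}(t_2))\le W(t_1,x_{p_1}(t_1))$, whereas the strict ordering postulated in \ref{RegAttractCoroll}, applied with $x_{p_1}^1=x_{p_1}(t_1)\in K_r$, $x_{p_1}^2=x_{p_1}(t_2)\in M_1^c\cap\widetilde{D_1}$ and $t_1\le t_2$, gives the reverse strict inequality $W(t_1,x_{p_1}(t_1))<W(t_2,x_{p_1}(t_2))$. This contradiction shows $x_{p_1}(t)\in M_1$ throughout the maximal interval, which is exactly condition \ref{RegAttractor}.

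I expect the main obstacle to be the domain bookkeeping that guarantees every point invoked in the contradiction really lies in a set where the relevant hypothesis holds: one must check that the exit point $x_{p_1}(t_2)$ lies in $M_1^c\cap\widetilde{D_1}$ (not merely in $M_1^c$) and that the whole arc on $(t_1,t_2]$ stays in $K_r^c\cap\widetilde{D_1}$ with a consistent companion $x_{p_2}(t)\in\widetilde{D_2}$, so that both \eqref{RegAttractIneq} and the $W$-ordering can be evaluated at the same pair of points. This is precisely where the extensions $D_{\Pi,i}$ and $D_{c,i}$, and the case distinction according to whether $\Pi$ depends on $x_{p_2}$, earn their keep. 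A secondary technical point is to read the self-referential defining relation for $K_r$ as singling out the shell of points of $M_1$ at distance $r$ from $M_1^c$ and to justify that, for all sufficiently small $r$, this shell is nonempty and actually separates the starting point from $M_1^c$, which is what makes the crossing time $t_1$ well defined.
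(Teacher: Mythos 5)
Your proposal is correct and takes essentially the same route as the paper: the paper likewise reduces the corollary to showing that condition \ref{RegAttractCoroll} implies the invariance condition \ref{RegAttractor} for the component $x_{p_1}(t)$ (this is its Lemma \ref{Lem-UltimSet}), after which the proof of Theorem \ref{Th_GlobReg-LipschObl} is reused verbatim, with the same domain bookkeeping via $\widetilde{M}_1\subseteq\widetilde{D_1}$ that you flag. The only difference is that the paper establishes the invariance by citing \cite[p.~116, Lemma~1]{LaSal-Lef} together with the exhaustion $M_1=\bigcup_{r>0}K_r$, whereas you reprove that lemma inline by the first-exit/monotonicity contradiction, which is the standard argument behind it.
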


 \begin{corollary}\label{Coroll-GlobReg2}
Theorem~\ref{Th_GlobReg-LipschObl} remains valid if condition \ref{ExtensReg} is replaced by
\begin{enumerate}[label={\upshape\arabic*.}, ref={\upshape\arabic*}, itemsep=3pt,parsep=0pt,topsep=4pt,leftmargin=0.6cm]
 \addtocounter{enumi}{3}
\item\label{ExtensRegCoroll} If $M_1$ is unbounded, then the following holds.

  There exists a number ${R>0}$, a function ${V\in C^1\big(\mT\times M_R,\R\big)}$ positive on ${\mT\times M_R}$, where ${M_R=\{x_{p_1}\in M_1\mid \|x_{p_1}\|> R\}}$, and functions ${k\in C(\mT,\R)}$, ${U\in C(0,\infty)}$ such that:\; ${\lim\limits_{\|x_{p_1}\|\to+\infty}V(t,x_{p_1})=+\infty}$ uniformly in $t$ on each finite interval ${[a,b)\subset \mT}$;\; for each ${t\in \mT}$, ${x_{p_1}\in M_R}$, ${x_{p_2}\in M_2}$  such that ${(t,x_{p_1}+x_{p_2})\in L_{t_+}}$, the inequality \,${\dot{V}_\eqref{DAEsysExtReg1}(t,x_{p_1})\le k(t)\, U\big(V(t,x_{p_1})\big)}$\, holds;\;  $\int\limits_{{\textstyle v}_0}^{\infty}\dfrac{dv}{U(v)} =\infty$\, \textup{(}$v_0>0$ is a constant\textup{)}.
\end{enumerate}
 \end{corollary}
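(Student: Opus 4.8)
The plan is to obtain Corollary~\ref{Coroll-GlobReg2} as a direct specialization of Theorem~\ref{Th_GlobReg-LipschObl}: I would show that hypothesis~\ref{ExtensRegCoroll} is nothing but condition~\ref{ExtensReg} with the particular comparison function
\begin{equation*}
\chi(t,v):=k(t)\,U(v),\qquad (t,v)\in\mT\times(0,\infty).
\end{equation*}
Since $k\in C(\mT,\R)$ and $U\in C(0,\infty)$, the product $\chi=k\cdot U$ lies in $C(\mT\times(0,\infty),\R)$, so it is an admissible comparison function in the sense of condition~\ref{ExtensReg}. With the same $R$, $V\in C^1(\mT\times M_R,\R)$ and $M_R$ as provided by~\ref{ExtensRegCoroll}, requirement~\ref{ExtensReg1} (uniform divergence of $V$ as $\|x_{p_1}\|\to+\infty$) and requirement~\ref{ExtensReg2} (the inequality $\dot V_{\eqref{DAEsysExtReg1}}(t,x_{p_1})\le\chi(t,V(t,x_{p_1}))=k(t)U(V(t,x_{p_1}))$) hold verbatim. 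Hence the whole content of the corollary reduces to verifying requirement~\ref{GlobSolvReg}: that the differential inequality~\eqref{L1v}, $\dot v\le\chi(t,v)=k(t)U(v)$, has no positive solution with finite escape time.

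For this I would invoke the classical Osgood--Wintner comparison argument built on the divergence $\int_{v_0}^{\infty}\dd v/U(v)=\infty$ (which presupposes $U>0$ for large arguments). Suppose, to the contrary, that some positive $v\in C^1([a,T),(0,\infty))$ with $T<\infty$ satisfies $\dot v\le k(t)U(v)$ on $[a,T)$ and $\lim_{t\to T-0}v(t)=+\infty$. Because $v(t)\to+\infty$, there is $t_1\in[a,T)$ with $v(t)\ge v_0$, hence $U(v(t))>0$, for all $t\in[t_1,T)$. Setting $G(w):=\int_{v_0}^{w}\dd s/U(s)$, which is strictly increasing with $G(w)\to+\infty$ as $w\to+\infty$, and dividing the inequality by $U(v(t))>0$, an integration from $t_1$ to $t<T$ yields
\begin{equation*}
G(v(t))-G(v(t_1))=\int_{t_1}^{t}\frac{\dot v(s)}{U(v(s))}\,\dd s\le\int_{t_1}^{t}k(s)\,\dd s\le\int_{t_1}^{T}|k(s)|\,\dd s=:K<\infty,
\end{equation*}
the bound $K$ being finite since $k$ is continuous on the compact interval $[t_1,T]$. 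Letting $t\to T-0$ makes the left-hand side tend to $+\infty$, contradicting $G(v(t))\le G(v(t_1))+K$. Therefore no positive solution of~\eqref{L1v} escapes in finite time, which is exactly~\ref{GlobSolvReg}.

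With all three parts of condition~\ref{ExtensReg} established for $\chi=k\cdot U$, Theorem~\ref{Th_GlobReg-LipschObl} applies unchanged (conditions~\ref{SoglReg}, \ref{InvReg-Lipsch}, \ref{RegAttractor} are kept as they are) and delivers a unique global solution of the IVP \eqref{DAE}, \eqref{ini} for every consistent initial point $(t_0,x_0)\in L_{t_+}$ with $P_ix_0\in M_i$, $i=1,2$. I expect the only delicate point to be the legitimacy of the comparison step near the hypothetical blow-up time: one must secure $U(v(t))>0$ along the solution so that the division by $U(v(t))$ and the substitution $w=v(s)$ are valid, which is precisely what $v(t)\to+\infty$ together with positivity of $U$ for large arguments guarantees. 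The sign of $k$ plays no role, as it is absorbed into the finite majorant $\int_{t_1}^{T}|k(s)|\,\dd s$.
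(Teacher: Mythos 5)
Your proposal is correct and takes essentially the same approach as the paper: set $\chi(t,v):=k(t)\,U(v)$ and apply Theorem~\ref{Th_GlobReg-LipschObl}, the whole point being that divergence of $\int_{v_0}^{\infty}\frac{dv}{U(v)}$ rules out positive solutions of \eqref{L1v} with finite escape time. The only difference is that the paper simply cites \cite{LaSal-Lef} for this last comparison fact, whereas you prove it directly via the standard Osgood--Wintner argument, which is valid here since by the paper's notational convention $U\in C(0,\infty)$ means $U$ maps $(0,\infty)$ into $(0,\infty)$, so $U>0$ and the division by $U(v(t))$ is legitimate.
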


 \begin{corollary}\label{Coroll-UstLagrReg}
If in the conditions of Theorem~\ref{Th_GlobReg-LipschObl} the sets $M_1$ and $M_2$ are bounded, then the equation \eqref{DAE} is Lagrange stable for the initial points $(t_0,x_0)\in L_{t_+}$ for which $P_ix_0\in M_i$, $i=1,2$.
 \end{corollary}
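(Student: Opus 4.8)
The plan is to read off global existence directly from Theorem~\ref{Th_GlobReg-LipschObl} and then to upgrade it to boundedness using only the extra hypothesis that $M_1$ and $M_2$ are bounded. Recall that Lagrange stability of the solution requires two things: that it exist on all of $[t_0,\infty)$ and that $\sup_{t\ge t_0}\|x(t)\|<\infty$. The first is exactly the conclusion of Theorem~\ref{Th_GlobReg-LipschObl}, which applies verbatim under the corollary's hypotheses (note that when $M_1$ is bounded, condition \ref{ExtensReg} is vacuous and global existence rests entirely on the confinement provided by condition \ref{RegAttractor}). Hence the whole task reduces to producing a uniform bound on $\|x(t)\|$, and I would treat the two components $x_{p_1}(t)=P_1x(t)$ and $x_{p_2}(t)=P_2x(t)$ separately, using the norm $\|x\|=\|x_{p_1}\|+\|x_{p_2}\|$ fixed in Section~\ref{Preliminaries}.

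First I would bound the differential component $x_{p_1}(t)$. Since $M_1$ is bounded it is a proper subset of the subspace $X_1$, so $M_1\neq X_1$ and condition \ref{RegAttractor} is in force: for a solution with $P_ix_0\in M_i$ the component $x_{p_1}(t)$ can never leave $M_1$ on its interval of existence, which by the first step is all of $[t_0,\infty)$. Consequently $\|x_{p_1}(t)\|\le \sup_{y\in M_1}\|y\|=:C_1<\infty$ for every $t\ge t_0$.

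Next I would bound the algebraic component $x_{p_2}(t)$. Since the graph of any solution lies in the manifold $L_{t_+}$, we have $(t,x_{p_1}(t)+x_{p_2}(t))\in L_{t_+}$ for all $t$, i.e. $x_{p_2}(t)$ solves the algebraic equation \eqref{DAEsysProjReg2} for the given $x_{p_1}(t)\in M_1$. By condition \ref{SoglReg} there is, for each such $t$ and $x_{p_1}(t)$, a \emph{unique} element of $M_2$ satisfying this constraint, which I would denote $\psi(t,x_{p_1}(t))$; the invertibility/contraction hypothesis \ref{InvReg-Lipsch} furnishes, through the contraction-mapping (implicit-function) mechanism already used in proving Theorem~\ref{Th_GlobReg-LipschObl}, a locally unique and continuous resolution of \eqref{DAEsysProjReg2} for $x_{p_2}$ in terms of $(t,x_{p_1})$. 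Because $x_{p_2}(t_0)=P_2x_0\in M_2$ coincides with $\psi(t_0,x_{p_1}(t_0))$, and both $t\mapsto x_{p_2}(t)$ and $t\mapsto \psi(t,x_{p_1}(t))$ are continuous curves through this point solving \eqref{DAEsysProjReg2}, local uniqueness forces the identity $x_{p_2}(t)=\psi(t,x_{p_1}(t))\in M_2$ to persist for all $t\ge t_0$. Hence $\|x_{p_2}(t)\|\le \sup_{y\in M_2}\|y\|=:C_2<\infty$.

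Combining the two estimates gives $\|x(t)\|=\|x_{p_1}(t)\|+\|x_{p_2}(t)\|\le C_1+C_2$ uniformly on $[t_0,\infty)$, so the global solution is bounded and therefore Lagrange stable, which is the assertion. I expect the main obstacle to be the third step: rigorously propagating the membership $x_{p_2}(t)\in M_2$ from $t=t_0$ to all $t$, rather than merely at the initial instant. The delicate point is to rule out the continuous curve $x_{p_2}(t)$ jumping to another (possibly unbounded) branch of solutions of the constraint \eqref{DAEsysProjReg2}; this is precisely what the local uniqueness coming from \ref{InvReg-Lipsch} forbids, but one must invoke a standard connectedness/continuation argument on the maximal interval to make the persistence global. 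Should one prefer a softer argument, it suffices to show $x_{p_2}(t)\in\overline{M_2}$, since $\overline{M_2}$ is bounded together with $M_2$, and this already yields the required uniform bound.
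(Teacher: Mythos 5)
Your proof is correct and takes essentially the same route as the paper: the paper's proof of Corollary~\ref{Coroll-UstLagrReg} likewise reads off boundedness from the representation $x(t)=x_{p_1}(t)+\eta(t,x_{p_1}(t))$ established in the proof of Theorem~\ref{Th_GlobReg-LipschObl}, with $x_{p_1}(t)$ confined to the bounded set $M_1$ and $\eta(t,x_{p_1}(t))$ lying in the bounded set $M_2$. The branch-jumping worry in your final paragraph is already resolved inside the theorem's proof, since the unique solution is constructed there precisely with $P_2x(t)=\eta(t,x_{p_1}(t))$ where $\eta\colon \mT\times M_1\to M_2$ by construction, so membership of the algebraic component in $M_2$ holds for all $t$ automatically.
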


 \begin{theorem}\label{Th_GlobReg-Obl}
Theorem~\ref{Th_GlobReg-LipschObl} remains valid if condition \ref{InvReg-Lipsch} is replaced by
\begin{enumerate}[label={\upshape\arabic*.}, ref={\upshape\arabic*}, itemsep=3pt,parsep=0pt,topsep=4pt,leftmargin=0.6cm]
\addtocounter{enumi}{1}
\item\label{InvReg} A function $f(t,x)$ has the continuous partial derivative with respect to $x$ on $\mT\times D$.\;  For~any fixed $t_*\in \mT$, ${x_*=x_{p_1}^*+x_{p_2}^*}$ such that $x_{p_1}^*\in M_1$, $x_{p_2}^*\in M_2$ and ${(t_*,x_*)\in L_{t_+}}$, the operator  
     \begin{equation}\label{funcPhiInvReg}
    \Phi_{t_*,x_*}:=\left[\partial_x (Q_2f)(t_*,x_*)- B\right] P_2\colon X_2\to Y_2
     \end{equation}
    has the inverse $\Phi_{t_*,x_*}^{-1}\in \mathrm{L}(Y_2,X_2)$.
\end{enumerate}
 \end{theorem}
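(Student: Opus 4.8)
The plan is to deduce hypothesis \ref{InvReg-Lipsch} from hypothesis \ref{InvReg}; once this is done, the assertion follows immediately, since all remaining hypotheses of Theorem~\ref{Th_GlobReg-LipschObl} (conditions \ref{SoglReg}, \ref{RegAttractor}, \ref{ExtensReg}) are left unchanged and the conclusion is literally that of Theorem~\ref{Th_GlobReg-LipschObl}. First I would verify the Lipschitz part of \ref{InvReg-Lipsch}: since $f$ has a continuous partial derivative with respect to $x$ on $\mT\times D$, for each fixed $(t_*,x_*)\in\mT\times D$ one can choose a convex closed neighborhood on which $\partial_x f$ is bounded by some constant $L\ge 0$; the integral (mean value) representation $f(t,x_1)-f(t,x_2)=\big(\int_0^1\partial_x f(t,x_2+s(x_1-x_2))\,ds\big)(x_1-x_2)$ then gives $\|f(t,x_1)-f(t,x_2)\|\le L\|x_1-x_2\|$ on that neighborhood. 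Hence $f$ is locally Lipschitz continuous with respect to $x$ on $\mT\times D$.

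It remains to produce the invertible operator and the contractive estimate \eqref{ContractiveMapReg}. For this I would take $\Phi_{t_*,x_*}$ to be exactly the operator \eqref{funcPhiInvReg}, which by \ref{InvReg} belongs to $\mathrm{L}(X_2,Y_2)$ and is invertible with $\Phi_{t_*,x_*}^{-1}\in\mathrm{L}(Y_2,X_2)$. The key algebraic observation is that the terms involving $B$ cancel: since the increments $x_{p_2}^1,x_{p_2}^2\in X_2$, we have $P_2(x_{p_2}^1-x_{p_2}^2)=x_{p_2}^1-x_{p_2}^2$ and $B(x_{p_2}^1-x_{p_2}^2)=B\big|_{X_2}(x_{p_2}^1-x_{p_2}^2)$, so that with $\widetilde{F}$ as in \eqref{tildeFReg} and using $\partial_x(Q_2f)=Q_2\,\partial_x f$,
\begin{align*}
&\widetilde{F}(t,x_{p_1},x_{p_2}^1)-\widetilde{F}(t,x_{p_1},x_{p_2}^2)-\Phi_{t_*,x_*}[x_{p_2}^1-x_{p_2}^2]\\
&\qquad=Q_2f(t,x_{p_1}+x_{p_2}^1)-Q_2f(t,x_{p_1}+x_{p_2}^2)-\partial_x(Q_2f)(t_*,x_*)\,[x_{p_2}^1-x_{p_2}^2].
\end{align*}

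Finally, I would estimate the right-hand side by the continuity of $\partial_x(Q_2f)$. Writing the first difference through its integral representation along the segment $\xi_s=x_{p_1}+x_{p_2}^2+s(x_{p_2}^1-x_{p_2}^2)$ (which stays in the convex neighborhood $U_\varepsilon(x_{p_2}^*)$ in the $X_2$-direction), the quantity above equals $\big(\int_0^1[\partial_x(Q_2f)(t,\xi_s)-\partial_x(Q_2f)(t_*,x_*)]\,ds\big)[x_{p_2}^1-x_{p_2}^2]$, whence \eqref{ContractiveMapReg} holds with $q(\delta,\varepsilon):=\sup\|\partial_x(Q_2f)(t,\xi)-\partial_x(Q_2f)(t_*,x_*)\|$, the supremum being taken over $(t,x_{p_1})\in U_\delta(t_*,x_{p_1}^*)$ and $\xi$ in the closed $X_2$-neighborhood. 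By continuity of $\partial_x(Q_2f)$ at $(t_*,x_*)$ one has $\lim_{\delta,\varepsilon\to0}q(\delta,\varepsilon)=0<\|\Phi_{t_*,x_*}^{-1}\|^{-1}$, the last quantity being strictly positive because $\Phi_{t_*,x_*}^{-1}$ is bounded; thus \ref{InvReg-Lipsch} is satisfied and Theorem~\ref{Th_GlobReg-LipschObl} applies. The only point requiring care — the main, if modest, obstacle — is the bookkeeping that the perturbation direction $x_{p_2}^1-x_{p_2}^2$ lies in $X_2$, so that $P_2$ acts as the identity and the $B$-terms cancel exactly, together with checking that the segment $\xi_s$ remains inside the (convex) neighborhoods on which $\partial_x f$ is controlled.
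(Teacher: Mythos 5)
Your proposal is correct and follows essentially the same route as the paper's own proof: both take $\Phi_{t_*,x_*}$ to be exactly the operator \eqref{funcPhiInvReg} (noting it equals $\partial_{x_{p_2}}\widetilde{F}(t_*,x_{p_1}^*,x_{p_2}^*)$, which is where your $B$-cancellation is implicit), derive \eqref{ContractiveMapReg} from the integral mean-value representation with $q(\delta,\varepsilon)$ the supremum of the deviation of the derivative from its value at $(t_*,x_*)$, and conclude $\lim_{\delta,\varepsilon\to 0}q(\delta,\varepsilon)=0<\|\Phi_{t_*,x_*}^{-1}\|^{-1}$ by continuity. Your explicit verification of the local Lipschitz property and of the cancellation of the $B$-terms only spells out steps the paper leaves implicit.
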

 \begin{remark}
The operator  $\Phi_{t_*,x_*}$ \eqref{funcPhiInvReg} is defined as an operator from $X_2$ into $Y_2$, however, in general, the operator defined by the formula from \eqref{funcPhiInvReg}, that is, $\widehat{\Phi}_{t_*,x_*}:=\left[\partial_x (Q_2f)(t_*,x_*)- B\right] P_2$ (where $t_*$, $x_*$ are fixed), is an operator from $\Rn$ into $\Rn$ with the rang $\widehat{\Phi}_{t_*,x_*}\Rn= Y_2$, and $\Phi_{t_*,x_*}=\widehat{\Phi}_{t_*,x_*}\big|_{X_2}$. Since it is assumed that $\Phi_{t_*,x_*}$ is invertible, then $\widehat{\Phi}_{t_*,x_*}\Rn=\widehat{\Phi}_{t_*,x_*}X_2=Y_2$ \,($X_1=\Ker(\widehat{\Phi}_{t_*,x_*})$).

The operator $\widehat{\Phi}_{t_*,x_*}$ has the semi-inverse $\widehat{\Phi}_{t_*,x_*}^{(-1)}$, i.e., the operator $\widehat{\Phi}_{t_*,x_*}^{(-1)}\in \mathrm{L}(\Rn)$ such that $\widehat{\Phi}_{t_*,x_*}^{(-1)}\Rn= \widehat{\Phi}_{t_*,x_*}^{(-1)}Y_2=X_2$ and $\Phi_{t_*,x_*}^{-1}=\widehat{\Phi}_{t_*,x_*}^{(-1)}\big|_{Y_2}$, which is defined by the relations \;$\widehat{\Phi}_{t_*,x_*}^{(-1)} \widehat{\Phi}_{t_*,x_*}= P_2$, $\widehat{\Phi}_{t_*,x_*} \widehat{\Phi}_{t_*,x_*}^{(-1)}= Q_2$ and $\widehat{\Phi}_{t_*,x_*}^{(-1)}= P_2\widehat{\Phi}_{t_*,x_*}^{(-1)}$.
 \end{remark}

 \begin{proof}[The proof of Theorem \ref{Th_GlobReg-LipschObl}]
Recall that the DAE \eqref{DAE} is equivalent to the system \eqref{DAEsysExtReg1}, \eqref{DAEsysExtReg2}, where the representation of $x$ in the form $x=x_{p_1}+x_{p_2}$, $x_{p_i}=P_i x\in D_i$, $i=1,2$, is uniquely determined for each $x$, and that the equation \eqref{DAEsysExtReg2} is equivalent to \eqref{DAEsysReg2}.

Also, note that there is a correspondence between $X_1 \dot+X_2$ and $X_1\times X_2$ which is established in the same way as in Remark \ref{Rem-Correspond} (see Appendix \ref{Appendix}) and we identify the representations of $x\in \Rn$ in the form  $x=x_{p_1}+x_{p_2}$ (see \eqref{xrr}) and the form $x=(x_{p_1},x_{p_2})$ where $x_{p_i}\in X_i$, $i=1,2$.

We denote
 $$
\tilde{f}(t,x_{p_1},x_{p_2})= f(t,x_{p_1}+x_{p_2})=f(t,x),
 $$
consider the mapping $F\in C(\mT\times D_1\times D_2,\,X_2)$ defined by
 \begin{equation}\label{funcFReg}
F(t,x_{p_1},x_{p_2}):= B_2^{-1}Q_2 \tilde{f}(t,x_{p_1},x_{p_2})-x_{p_2},
 \end{equation}
and write the equation \eqref{DAEsysReg2} in the form
 \begin{equation}\label{DAEsysReg2equiv}
F(t,x_{p_1},x_{p_2})=0.
 \end{equation}
Thus, the equation \eqref{DAEsysExtReg2} is equivalent to \eqref{DAEsysReg2equiv}. It follows from the definition of the manifold $L_{t_+}$ that $(t,x)\in L_{t_+}$  if and only if $(t,x)$ (where $x=x_{p_1}+x_{p_2}$) satisfies \eqref{DAEsysExtReg2} or \eqref{DAEsysReg2equiv}.

Note that the equation \eqref{DAEsysReg2equiv} can be rewritten as
 \begin{equation}\label{DAEsysReg2equiv2}
x_{p_2}=x_{p_2}-\Phi_{t_*,x_*}^{-1}\widetilde{F}(t,x_{p_1},x_{p_2}),
 \end{equation}
where $\widetilde{F}\in C(\mT\times D_1\times D_2,Y_2)$ is the mapping \eqref{tildeFReg} which can be represented as $\widetilde{F}(t,x_{p_1},x_{p_2})= Q_2\tilde{f}(t,x_{p_1},x_{p_2})-B_2x_{p_2}=B_2 F(t,x_{p_1},x_{p_2})$
(since $B_2=B\big|_{X_2}=\Es B_2\big|_{X_2}$), and $\Phi_{t_*,x_*}$ is the operator defined in condition \ref{InvReg-Lipsch}.

Let an open set $M_1\subseteq D_1$ and a set $M_2\subseteq D_2$ be such that the theorem conditions hold.
It follows from condition \ref{SoglReg} that for any fixed  $t_*\in \mT$, $x_{p_1}^*\in M_1$ there exists a unique $x_{p_2}^*\in M_2$ such that $(t_*,x_{p_1}^*+x_{p_2}^*)\in L_{t_+}$.

Below, we give lemma which is proved in a similar way as Lemma 3.1 from \cite{Fil.Sing-GN}.
 \begin{lemma}\label{Lemma-ImplicitFuncReg}
For any fixed $t_*\in \mT$, $x_{p_i}^*\in M_i$, $i=1,2$,  for which $(t_*,x_{p_1}^*+x_{p_2}^*)\in L_{t_+}$, there exist open neighborhoods $U_r(t_*,x_{p_1}^*)=U_{r_1}(t_*)\times U_{r_2}(x_{p_1}^*)\subset \mT\times D_1$, \,$U_\rho(x_{p_2}^*)\subset D_2$ and a unique function  $x_{p_2}=\nu(t,x_{p_1})\in C(U_r(t_*,x_{p_1}^*),U_\rho(x_{p_2}^*))$ which satisfies the equality $\nu(t_*,x_{p_1}^*)=x_{p_2}^*$ and a Lipschitz condition with respect to $x_{p_1}$  on $U_r(t_*,x_{p_1}^*)$ and is a solution of the equation \eqref{DAEsysReg2equiv} with respect to $x_{p_2}$, i.e., $F(t,x_{p_1},\nu(t,x_{p_1}))=0$ for all $(t,x_{p_1})\in U_r(t_*,x_{p_1}^*)$ (the numbers $\rho,r>0$ depend on the choice of $t_*$, $x_{p_1}^*$, $x_{p_2}^*$).
 \end{lemma}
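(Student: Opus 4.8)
The plan is to prove this as an implicit-function-type statement via the Banach fixed point theorem (contraction mapping principle) with parameters, using precisely the reformulation \eqref{DAEsysReg2equiv2}. For each fixed $(t,x_{p_1})$ near $(t_*,x_{p_1}^*)$, I would introduce the map
$$
\Psi_{t,x_{p_1}}(x_{p_2}):=x_{p_2}-\Phi_{t_*,x_*}^{-1}\widetilde{F}(t,x_{p_1},x_{p_2})
$$
on a closed ball $\overline{U_\rho(x_{p_2}^*)}$, and note that, since $\Phi_{t_*,x_*}^{-1}$ is invertible and $\widetilde{F}=B_2 F$ with $B_2$ invertible, a fixed point of $\Psi_{t,x_{p_1}}$ is exactly a solution of $\widetilde{F}(t,x_{p_1},x_{p_2})=0$, hence of $F(t,x_{p_1},x_{p_2})=0$, i.e., of \eqref{DAEsysReg2equiv}. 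Constructing $\nu$ thus reduces to producing a unique fixed point depending continuously and Lipschitz-continuously on the parameter $(t,x_{p_1})$.

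First I would establish the contraction property. For $x_{p_2}^1,x_{p_2}^2\in\overline{U_\rho(x_{p_2}^*)}\subset U_\varepsilon(x_{p_2}^*)$, factoring out $\Phi_{t_*,x_*}^{-1}$ and invoking \eqref{ContractiveMapReg} gives
$$
\|\Psi_{t,x_{p_1}}(x_{p_2}^1)-\Psi_{t,x_{p_1}}(x_{p_2}^2)\|\le \|\Phi_{t_*,x_*}^{-1}\|\,q(\delta,\varepsilon)\,\|x_{p_2}^1-x_{p_2}^2\|.
$$
Because $\lim_{\delta,\varepsilon\to0}q(\delta,\varepsilon)<\|\Phi_{t_*,x_*}^{-1}\|^{-1}$, the neighborhoods can be shrunk so that $\theta:=\|\Phi_{t_*,x_*}^{-1}\|\,q(\delta,\varepsilon)<1$, making each $\Psi_{t,x_{p_1}}$ a contraction with a constant $\theta$ uniform in the parameters. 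Next I would verify the self-mapping property: since $(t_*,x_*)\in L_{t_+}$ means $\widetilde{F}(t_*,x_{p_1}^*,x_{p_2}^*)=0$, we have $\Psi_{t_*,x_{p_1}^*}(x_{p_2}^*)=x_{p_2}^*$, and continuity of $\widetilde{F}$ (inherited from $f\in C(\mT\times D,\Rn)$) in $(t,x_{p_1})$ makes $\|\Psi_{t,x_{p_1}}(x_{p_2}^*)-x_{p_2}^*\|$ arbitrarily small on a sufficiently small $U_r(t_*,x_{p_1}^*)$. Choosing $r$ so that this quantity is $\le(1-\theta)\rho$, the triangle inequality yields $\|\Psi_{t,x_{p_1}}(x_{p_2})-x_{p_2}^*\|\le\theta\rho+(1-\theta)\rho=\rho$, so $\overline{U_\rho(x_{p_2}^*)}$ is invariant. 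The Banach fixed point theorem then supplies, for each $(t,x_{p_1})\in U_r(t_*,x_{p_1}^*)$, a unique $x_{p_2}=\nu(t,x_{p_1})$ with $F(t,x_{p_1},\nu(t,x_{p_1}))=0$ and $\nu(t_*,x_{p_1}^*)=x_{p_2}^*$.

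Finally, the continuity and the Lipschitz dependence of $\nu$ on $x_{p_1}$ follow from the standard parameter-estimate for contractions: expressing $\nu(t,x_{p_1}^a)-\nu(t,x_{p_1}^b)$ through the two fixed-point identities and peeling off the contraction part gives
$$
(1-\theta)\|\nu(t,x_{p_1}^a)-\nu(t,x_{p_1}^b)\|\le\|\Phi_{t_*,x_*}^{-1}\|\,\|\widetilde{F}(t,x_{p_1}^a,\nu(t,x_{p_1}^a))-\widetilde{F}(t,x_{p_1}^b,\nu(t,x_{p_1}^a))\|,
$$
and since $\widetilde{F}(t,\cdot,w)=Q_2 f(t,\cdot+w)-B_2 w$ is locally Lipschitz in $x_{p_1}$ by condition \ref{InvReg-Lipsch}, the right-hand side is bounded by $\|\Phi_{t_*,x_*}^{-1}\|\,L\,\|x_{p_1}^a-x_{p_1}^b\|$, yielding a Lipschitz constant $\|\Phi_{t_*,x_*}^{-1}\|L/(1-\theta)$; joint continuity in $(t,x_{p_1})$ follows identically from continuity of $f$ in $t$.

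The main obstacle I anticipate is not conceptual but a matter of careful bookkeeping: coordinating the radii $r,\varepsilon,\rho$ so that the estimate \eqref{ContractiveMapReg} is valid throughout the ball, the constant $\theta$ stays below $1$ uniformly in the parameters, and the ball is invariant, all simultaneously. The limit hypothesis $\lim_{\delta,\varepsilon\to0}q(\delta,\varepsilon)<\|\Phi_{t_*,x_*}^{-1}\|^{-1}$ is exactly the ingredient that permits this simultaneous shrinking; everything else is the routine contraction-with-parameters argument, which is why the statement can be asserted to follow as in Lemma~3.1 of \cite{Fil.Sing-GN}.
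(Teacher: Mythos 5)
Your proof is correct and follows essentially the same route as the paper: the paper proves this lemma only by reference to Lemma~3.1 of \cite{Fil.Sing-GN}, whose argument is precisely the contraction-with-parameters scheme you carry out, and indeed the fixed-point reformulation \eqref{DAEsysReg2equiv2} together with the hypothesis \eqref{ContractiveMapReg} (with $\lim_{\delta,\varepsilon\to 0}q(\delta,\varepsilon)<\|\Phi_{t_*,x_*}^{-1}\|^{-1}$) is tailor-made for exactly this Banach fixed point argument. Your coordination of the radii $r,\varepsilon,\rho$, the uniform contraction constant $\theta$, the invariance of the ball, and the Lipschitz/continuity estimates for $\nu$ supplies the details in the way the cited lemma's proof does.
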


From the above it follows that in some open neighborhood $U_r(t_*,x_{p_1}^*)$ of each (fixed) $(t_*,x_{p_1}^*)\in \mT\times M_1$ there exists a unique solution $x_{p_2}=\nu_{t_*,x_{p_1}^*}(t,x_{p_1})\in C(U_r(t_*,x_{p_1}^*),U_\rho(x_{p_2}^*))$, where $U_\rho(x_{p_2}^*)$ is an open neighborhood of the point $x_{p_2}^* \in M_2$ such that $(t_*,x_{p_1}^*+x_{p_2}^*)\in L_{t_+}$,  of the equation \eqref{DAEsysReg2equiv} and this solution satisfies a Lipschitz condition with respect to $x_{p_1}$ and the equality  $\nu_{t_*,x_{p_1}^*}(t_*,x_{p_1}^*)=x_{p_2}^*$.
Further, we introduce a function
\begin{equation}\label{eta}
\eta\colon \mT\times M_1\to M_2
\end{equation}
and define it by $\eta(t,x_{p_1})= \nu_{t_*,x_{p_1}^*}(t,x_{p_1})$ at the point \,$(t,x_{p_1})=(t_*,x_{p_1}^*)$ for each point $(t_*,x_{p_1}^*)\in \mT\times M_1$. Hence, the function $x_{p_2}=\eta(t,x_{p_1})$ is continuous in $(t,x_{p_1})$, satisfies locally a Lipschitz condition with respect to $x_{p_1}$ on $\mT\times M_1$ and is a unique solution of the equation \eqref{DAEsysReg2equiv} as well as the equivalent equation \eqref{DAEsysExtReg2} with respect to $x_{p_2}$  (for all $(t,x_{p_1})\in \mT\times M_1$).  Now we prove that the function $\eta$ is unique on the whole domain of definition (cf. \cite[Theorem 3.1]{Fil.MPhAG}, where an implicit function $\eta(t,z)$ which is a solution of the equation equivalent to \eqref{DAEsysReg2G} has been obtained under different conditions). Suppose that there exists another function $x_{p_2}=\sigma(t,x_{p_1})$ defined in the same way as $\eta$ and, accordingly, having the same properties as $\eta$, but differing from it at some point $(t_*,x_{p_1}^*)\in \mT\times M_1$. It follows from condition \ref{SoglReg} that $\sigma(t_*,x_{p_1}^*)=\eta(t_*,x_{p_1}^*)=x_{p_2}^*$, which contradicts the assumption. This holds for each $(t_*,x_{p_1}^*)\in \mT\times M_1$, and therefore $\eta(t,x_{p_1})\equiv\sigma(t,x_{p_1})$.

Substituting the function $x_{p_2}=\eta(t,x_{p_1})$ in \eqref{DAEsysExtReg1}, we get the equation
 \begin{equation}\label{DAEsysExtReg1eta}
\dot{x}_{p_1}=\widetilde{\Pi}(t,x_{p_1}),\quad \widetilde{\Pi}(t,x_{p_1})=\Es A_1^{(-1)}\big(Q_1 \widetilde{f}(t,x_{p_1},\eta(t,x_{p_1}))-\Es B_1 x_{p_1}\big),
 \end{equation}
where $\widetilde{\Pi}\colon \mT\times M_1\to X_1$ is continuous on $\mT\times M_1$ and satisfies locally a Lipschitz condition with respect to $x_{p_1}$ on $\mT\times M_1$, and $\widetilde{\Pi}(t,x_{p_1})=\Pi(t,x_{p_1},\eta(t,x_{p_1}))$ where $\Pi$ is defined in \eqref{Pi}.
Thus, for each initial point $(t_0,x_0)\in L_{t_+}$ for which $P_1x_0\in M_1$ (then $\eta(t_0,P_1x_0)=P_2x_0\in M_2$ by construction) there exists a unique solution $x_{p_1}=x_{p_1}(t)$ of the IVP for the equation \eqref{DAEsysExtReg1eta} with the initial condition
\begin{equation}\label{RegDEeta_ini}
x_{p_1}(t_0)=x_{p_1,0},\quad \text{where}\quad x_{p_1,0}=P_1x_0\in M_1,
\end{equation}
on some interval $[t_0,t_0+\delta_0)$, $\delta_0>0$. By the extension theorems (see \cite{Schwartz2,Hartman}), the solution $x_{p_1}(t)$ can be extended over a maximal interval of existence $J_{max}\subseteq [t_0,\infty)$ ($t_0\in J_{max}$) and the extended solution is a unique solution of the IVP \eqref{DAEsysExtReg1eta}, \eqref{RegDEeta_ini} on $J_{max}$. Hence, the IVP \eqref{DAE}, \eqref{ini} has a unique solution $x(t)=x_{p_1}(t)+\eta(t,x_{p_1}(t))$  on the maximal interval of existence $J_{max}$. Notice that, according to \cite{Hartman}, $J_{max}$ is a right maximal interval of existence since we extend the solution only to the right, however, for brevity, it will be called a maximal interval of existence.

 \smallskip
Let us prove that the maximal interval of existence $J_{max}$ of the solution $x_{p_1}(t)$ is $[t_0,\infty)$. The proofs differ depending on the form of the open set $M_1$.

  \smallskip
\emph{\textbf{Case 1.}} Assume that $M_1=X_1$ (accordingly, $M_1=D_1=X_1$). In this case, the proof is carried out by analogy with the proof of the existence of a global (on $[t_0,\infty)$) solution of a time-varying semilinear DAE which has been given in \cite[Theorem~3.1]{Fil.GSA} (there $X_1$ depends on $t$, but the idea of the proof is the same) or of a time-invariant semilinear DAE, but for special forms of the functions $V$ and $\chi$, which has been given in \cite[Theorem~3.1]{Fil.MPhAG}.  These proofs are based on the application of conditions of the type of condition \ref{ExtensReg}. Further, we give the brief proof of the required statement.
It follows from condition~\ref{ExtensReg} that there exists a number $R>0$, a function ${V\in C^1\big(\mT\times M_R,\R\big)}$ positive on $\mT\times M_R$, where $M_R=\{x_{p_1}\in X_1\mid \|x_{p_1}\|>R\}$, and a function $\chi\in C(\mT\times (0,\infty),\R)$ such that conditions \ref{ExtensReg1}, \ref{GlobSolvReg} hold and
 \begin{equation}\label{L1dVdtReg}  
\dot{V}_{\eqref{DAEsysExtReg1eta}}(t,x_{p_1})=\partial _t V(t,x_{p_1})+ \partial_{x_{p_1}} V(t,x_{p_1})\cdot \widetilde{\Pi}(t,x_{p_1})\le \chi\big(t,V(t,x_{p_1})\big)
 \end{equation}
for $t\in \mT$, $x_{p_1}\in M_R$. Then, using the theorem \cite[Chapter~IV, Theorem~XIII]{LaSal-Lef}, we obtain that the solution $x_{p_1}(t)$  of the IVP \eqref{DAEsysExtReg1eta}, \eqref{RegDEeta_ini} exists on $J_{max}=[t_0,\infty)$.

 \smallskip
\emph{\textbf{Case 2.}} Now, assume that $M_1$ is a proper subset of $X_1$, i.e., $M_1\subsetneqq X_1$ ($M_1\subseteq D_1$, $D_1\subseteq X_1$, $M_1\ne X_1$).

  \smallskip
By condition \ref{RegAttractor},  the solution $x_{p_1}(t)$ belongs to $M_1$ for all $t\in J_{max}$.

If the domain of definition of the function $\widetilde{\Pi}(t,x_{p_1})=\Pi(t,x_{p_1},\eta(t,x_{p_1}))$ from \eqref{DAEsysExtReg1eta}  can be extended (with respect to $x_{p_1}$), namely, if there exists a set $D_{\eta,1}\supset M_1$ ($D_{\eta,1}\subseteq X_1$) such that the function $\eta(t,x_{p_1})$ is uniquely defined and continuous for all $t\in \mT$, $x_{p_1}\in D_{\eta,1}$ and there exists a set $\widetilde{M}_1\supsetneqq M_1$ such that $\widetilde{\Pi}(t,x_{p_1})$ is defined and continuous for all $t\in \mT$, $x_{p_1}\in\widetilde{M}_1$, where $\widetilde{M}_1\subseteq D_{\eta,1}$, $D_{\eta,1}\ne M_1$ if the function $\Pi$ having the form \eqref{Pi} depends on $x_{p_2}$ and $\widetilde{M}_1\subseteq X_1$ if this function does not depend on $x_{p_2}$ (then we take $D_{\eta,1}= M_1$), then we consider $\widetilde{\Pi}$ as the function defined on $\mT\times \widetilde{M}_1$. Otherwise, we consider $\widetilde{\Pi}$ as the function defined on $\mT\times \widetilde{M}_1$ where $\widetilde{M}_1=M_1$, i.e., with the same domain of definition as above.

Consider the equation \eqref{DAEsysExtReg1eta}, where $\widetilde{\Pi}\in C(\mT\times \widetilde{M}_1,X_1)$ and $\widetilde{M}_1$ is the set mentioned above.  Obviously, the solution $x_{p_1}(t)$ of the IVP \eqref{DAEsysExtReg1eta}, \eqref{RegDEeta_ini}, which is obtained above, is a solution of the IVP \eqref{DAEsysExtReg1eta}, \eqref{RegDEeta_ini} where $\widetilde{\Pi}$ is defined on $\mT\times \widetilde{M}_1$. If $\widetilde{M}_1\supseteq \overline{M_1}$, then we consider the equation \eqref{DAEsysExtReg1eta} where $\widetilde{\Pi}\in C(\mT\times \overline{M_1},X_1)$. In this case, the IVP \eqref{DAEsysExtReg1eta}, \eqref{RegDEeta_ini} has a solution $x_{p_1}=\widehat{x}_{p_1}(t)$ on a (right) maximal interval of existence $J$ and by the extension theorems (e.g., \cite[p.~12--14, Theorem 3.1 and Corollary 3.2]{Hartman}) either $J=[t_0,\infty)$, or $J=[t_0,\beta)$ where $\beta<\infty$ and $\|\widehat{x}_{p_1}(t)\|\to \infty$ as $t\to\beta-0$, or  $J=[t_0,\beta]$ where $\beta<\infty$ and $\widehat{x}_{p_1}(\beta)\in \partial M_1$ (note that $\mT$ is the unbounded interval $[t_+,\infty)$).
Since the solution $x_{p_1}(t)$ belongs to $M_1$ for all $t$ from the maximal interval of existence $J_{max}$ due to condition \ref{RegAttractor}, then either $J_{max}=[t_0,\infty)$, or $J_{max}=[t_0,\beta)$ where $\beta<\infty$ and $\lim\limits_{t\to\beta-0}\|x_{p_1}(t)\|=\infty$.  By virtue of the extension theorem \cite[p.~12--13, Theorem 3.1]{Hartman} (notice that we can consider the open interval $(t_+,\infty)$ instead of $[t_+,\infty)=\mT$ without loss of generality since a solution of the IVP \eqref{DAEsysExtReg1eta}, \eqref{RegDEeta_ini} with the initial value $t_0=t_+$ exists on some interval $[t_+,t_++\delta_1)$, $\delta_1>0$, and that we extend the solution only to the right),\, the same holds if $\widetilde{M}_1=M_1$. Thus, either $J_{max}=[t_0,\infty)$, or $J_{max}=[t_0,\beta)$ where $\beta<\infty$ and $\lim\limits_{t\to\beta-0}\|x_{p_1}(t)\|=\infty$.

The further proof is carried out for various forms of the set $M_1\ne X_1$.

 \smallskip
\emph{Case 2.1.} Assume that the complement $M_1^c$ of the set $M_1$ ($M_1\ne X_1$) is bounded. Then there exists a number $R_0>0$ such that $\{x_{p_1}\in X_1\mid \|x_{p_1}\|\ge R_0\}\subset M_1$. As shown above, the solution $x_{p_1}(t)$ remains all the time in $M_1$. Due to condition~\ref{ExtensReg}, there exists a number $R\ge R_0$, a function ${V\in C^1\big(\mT\times M_R,\R\big)}$ positive on $\mT\times M_R$, where $M_R=\{x_{p_1}\in X_1\mid \|x_{p_1}\|>R\}$, and a function $\chi\in C(\mT\times (0,\infty),\R)$ such that conditions \ref{ExtensReg1}, \ref{GlobSolvReg} are satisfied and the derivative of $V$ along the trajectories of the equation \eqref{DAEsysExtReg1eta} satisfies the inequality \eqref{L1dVdtReg} for all $t\in \mT$, $x_{p_1}\in M_R$. Thus, as in case~1, using the theorem \cite[Chapter~IV, Theorem~XIII]{LaSal-Lef}, we obtain that the solution $x_{p_1}(t)$  of the IVP \eqref{DAEsysExtReg1eta}, \eqref{RegDEeta_ini} exists on $J_{max}=[t_0,\infty)$.

 \smallskip
\emph{Case 2.2.} Assume that $M_1$ ($M_1\ne X_1$) and its complement $M_1^c$ are unbounded sets. The solution $x_{p_1}(t)$, as shown above, remains all the time in $M_1$, and either $J_{max}=[t_0,\infty)$, or $J_{max}=[t_0,\beta)$, $\beta<\infty$, and $\lim\limits_{t\to\beta-0}\|x_{p_1}(t)\|=\infty$.  Due to condition~\ref{ExtensReg}, there exists a number $R>0$, a function ${V\in C^1\big(\mT\times M_R,\R\big)}$ positive on $\mT\times M_R$, where $M_R=\{x_{p_1}\in M_1\mid \|x_{p_1}\|> R\}$, and a function $\chi\in C(\mT\times (0,\infty),\R)$ such that conditions \ref{ExtensReg1}, \ref{GlobSolvReg} are satisfied and the inequality \eqref{L1dVdtReg} holds for all $t\in \mT$ and $x_{p_1}\in M_R$. Then, we carry out the proof by analogy with the proof of the theorem \cite[Chapter~IV, Theorem~XIII]{LaSal-Lef} and obtain that the solution $x_{p_1}(t)$ exists on $[t_0,\infty)$, i.e., $[t_0,\beta)=[t_0,\infty)$.

 \smallskip
\emph{Case 2.3.} Assume that the set $M_1$ is bounded. Since the solution $x_{p_1}(t)$  belongs to $M_1$ for all $t\in J_{max}=[t_0,\beta)$, $\beta\le \infty$ \,($J_{max}$ is the maximal interval of existence), then it is bounded on $J_{max}$, i.e., $\sup\limits_{t\in J_{max}}\|x_{p_1}(t)\|=const<\infty$. Consequently, it is impossible that $\beta<\infty$ and $\lim\limits_{t\to\beta-0}\|x_{p_1}(t)\|=\infty$. Then it follows from the above that $J_{max}=[t_0,\infty)$.

\smallskip
It is proved that $J_{max}=[t_0,\infty)$. Thus, for each initial point $(t_0,x_0)\in L_{t_+}$ for which $P_ix_0\in M_i$, $i=1,2$, the IVP \eqref{DAE}, \eqref{ini} has a unique global solution.
 \end{proof}

 \begin{proof}[The proof of Corollary \ref{Coroll-GlobReg1}]
We only need to prove that the solution $x_{p_1}(t)$ of the IVP \eqref{DAEsysExtReg1eta}, \eqref{RegDEeta_ini}, which is obtained in the proof of Theorem \ref{Th_GlobReg-LipschObl}, belongs to $M_1$ for all $t\in J_{max}$, where $J_{max}$ is the maximal interval of existence of the solution. The rest of the proof is similar to the proof of Theorem~\ref{Th_GlobReg-LipschObl}.

Recall that the function $\widetilde{\Pi}$ from \eqref{DAEsysExtReg1eta} is defined by ${\widetilde{\Pi}(t,x_{p_1})=\Pi(t,x_{p_1},\eta(t,x_{p_1}))}$ where the function ${\Pi(t,x_{p_1},x_{p_2})}$ has the form \eqref{Pi} and is defined and continuous for $t\in \mT$, $x_{p_i}\in D_i$, $i=1,2$. If the domain of definition of $\Pi$ can be extended (with respect to $x_{p_1}$, $x_{p_2}$) to $\mT\times D_{\Pi,1}\times D_{\Pi,2}$ where $D_{\Pi,i}$  are the sets introduced in condition \ref{RegAttractCoroll}, then $\Pi(t,x_{p_1},x_{p_2})$ is defined and continuous for all $x_{p_i}\in D_{\Pi,i}$, $i=1,2$, and $t\in \mT$.
Note that if the point $(t,x_{p_1}+x_{p_2})$ satisfies the equation  \eqref{DAEsysProjReg2}, then it satisfies the equations \eqref{DAEsysExtReg2} and \eqref{DAEsysReg2equiv}, since these equations are equivalent.
Thus, if the domain of definition of the function \eqref{eta} can be extended (with respect to $x_{p_1}$), namely, if there exists a set $D_{\eta,1}\supset M_1$ ($D_{\eta,1}\subseteq X_1$) such that $\eta(t,x_{p_1})$ is uniquely defined and continuous for all $x_{p_1}\in D_{\eta,1}$, $t\in \mT$, then $D_{\eta,1}\subseteq D_{c,1}$ and $\eta\colon \mT\times D_{\eta,1}\to D_{c,2}$, where $D_{c,i}$, $i=1,2$, are the sets introduced in condition \ref{RegAttractCoroll}. If $D_{c,1}= M_1$, then $D_{\eta,1}=M_1$. Obviously, the function  $\widetilde{\Pi}(t,x_{p_1})$ is defined and continuous for all $t\in \mT$ and $x_{p_1}\in\widetilde{M}_1$, where $\widetilde{M}_1= D_{\Pi,1}\cap D_{\eta,1}\subseteq \widetilde{D_1}$ if the function $\Pi$ depends on $x_{p_2}$ and $\widetilde{M}_1= D_{\Pi,1}=\widetilde{D_1}$ if $\Pi$ does not depend on $x_{p_2}$. Below  $\widetilde{\Pi}$ is considered as the function defined on $\mT\times \widetilde{M}_1$ (obviously, $\widetilde{M}_1\supseteq M_1$).

Further, consider the equation \eqref{DAEsysExtReg1eta}, where $\widetilde{\Pi}\in C(\mT\times \widetilde{M}_1,X_1)$, and prove the following lemma.
 \begin{lemma}\label{Lem-UltimSet}
Assume that there exists a function $W\in C(\mT\times X_1,\R)$  and for each sufficiently small number $r>0$ ($r<<1$) there exists a closed set $K_r\subset M_1$ for which $\rho(K_r,M_1^c)=r$, such that ${W(t_1,x_{p_1}^1)<W(t_2,x_{p_1}^2)}$   for every $x_{p_1}^1\in K_r$, $x_{p_1}^2\in M_1^c\cap \widetilde{M}_1$ and $t_1,t_2\!\in\! \mT$ such that $t_1\le t_2$, and, in addition, $W(t,x_{p_1})$ has the continuous partial derivatives on $\mT\times K_r^c$ and
 $$
\dot{W}_\eqref{DAEsysExtReg1eta}(t,x_{p_1})=\partial_t W(t,x_{p_1})+ \partial_{x_{p_1}} W(t,x_{p_1})\cdot \widetilde{\Pi}(t,x_{p_1})\le 0
 $$
for every $t\in \mT$, $x_{p_1}\in K_r^c\cap \widetilde{M}_1$. Then each solution $x_{p_1}(t)$ of the equation \eqref{DAEsysExtReg1eta} which satisfies the initial condition \eqref{RegDEeta_ini} can never leave $M_1$.
 \end{lemma}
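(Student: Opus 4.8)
The plan is to argue by contradiction, using $W$ as a barrier (Lyapunov-type) function that separates the inner core $K_r$ from the complement $M_1^c$, together with the fact that $W$ cannot increase along the trajectory once the latter has left $K_r$. First I would fix an arbitrary solution $x_{p_1}(t)$ of \eqref{DAEsysExtReg1eta}, \eqref{RegDEeta_ini} on its interval of existence, recalling that $x_{p_1}(t)\in\widetilde{M}_1$ for all such $t$ (since $\widetilde{\Pi}$ is defined on $\mT\times\widetilde{M}_1$) and that $x_{p_1,0}=P_1x_0\in M_1$. Because $M_1$ is open and $x_{p_1,0}\in M_1$, the distance $\rho(x_{p_1,0},M_1^c)$ is positive, so I would choose $r>0$ small enough that both the hypotheses of the lemma hold for this $r$ and $x_{p_1,0}\in K_r$; here I use that the inner cores $K_r=\{x_{p_1}\in M_1\mid\rho(x_{p_1},M_1^c)\ge r\}$ exhaust $M_1$ as $r\to 0$, so every interior point lies in $K_r$ for small $r$.

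Now suppose, for contradiction, that the solution leaves $M_1$. Since $M_1^c$ is closed and $x_{p_1}(t)$ is continuous, the first exit time $t_2:=\inf\{t>t_0\mid x_{p_1}(t)\in M_1^c\}$ is well defined, and by continuity $x_{p_1}(t_2)\in\partial M_1\subseteq M_1^c$, hence $x_{p_1}(t_2)\in M_1^c\cap\widetilde{M}_1$. The key step is to locate the last instant the trajectory sits in $K_r$ before this exit: I would set $\tau:=\sup\{t\in[t_0,t_2]\mid x_{p_1}(t)\in K_r\}$. Since the preimage of the closed set $K_r$ under the continuous map $x_{p_1}(\cdot)$ is closed and contains $t_0$, the supremum is attained, so $x_{p_1}(\tau)\in K_r$; moreover $\tau<t_2$, because $K_r\subset M_1$ while $x_{p_1}(t_2)\in M_1^c$.

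By maximality of $\tau$, on the whole interval $(\tau,t_2]$ the trajectory lies in $K_r^c\cap\widetilde{M}_1$. Therefore the composite $g(t):=W(t,x_{p_1}(t))$ is continuous on $[\tau,t_2]$ (as $W\in C(\mT\times X_1,\R)$), and on $(\tau,t_2)$, using that $W$ has continuous partial derivatives on $\mT\times K_r^c$ and that $\dot{x}_{p_1}=\widetilde{\Pi}$, it is differentiable with $g'(t)=\dot{W}_\eqref{DAEsysExtReg1eta}(t,x_{p_1}(t))\le 0$ by the derivative hypothesis on $K_r^c\cap\widetilde{M}_1$. Hence $g$ is non-increasing on $[\tau,t_2]$, which gives $W(t_2,x_{p_1}(t_2))\le W(\tau,x_{p_1}(\tau))$. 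On the other hand, applying the separation hypothesis with $t_1=\tau\le t_2$, $x_{p_1}^1=x_{p_1}(\tau)\in K_r$, and $x_{p_1}^2=x_{p_1}(t_2)\in M_1^c\cap\widetilde{M}_1$ yields the strict inequality $W(\tau,x_{p_1}(\tau))<W(t_2,x_{p_1}(t_2))$. The two displays are incompatible, so no exit time $t_2$ can exist, and the solution remains in $M_1$.

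I expect the main obstacle to be purely organisational rather than analytic: correctly interpreting the hypothesis on $K_r$ (as the inner $r$-core of $M_1$) so that $x_{p_1,0}\in K_r$ for sufficiently small $r$, and being careful that the derivative inequality is invoked only where it is assumed, namely on $K_r^c\cap\widetilde{M}_1$, which is precisely the region the trajectory occupies on the interval $(\tau,t_2]$ produced by the ``last exit from $K_r$'' construction. Once those two points are nailed down, the contradiction between the monotonicity bound and the strict separation inequality closes the argument.
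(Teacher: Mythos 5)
Your proof is correct, and it is worth comparing with the paper's, because the two handle the key step differently. The paper's proof has the same skeleton as yours --- read the sets $K_r$ as inner cores of $M_1$ that exhaust it, so that $x_{p_1,0}\in K_r$ for all sufficiently small $r>0$, and then use $W$ as a barrier preventing a trajectory that starts in $K_r$ from reaching $M_1^c$ --- but it does not prove the barrier step at all: it simply invokes Lemma~1 on p.~116 of LaSalle--Lefschetz, which states precisely that under the two hypotheses on $W$ a solution lying in $K_r$ at some time can never thereafter leave $M_1$. You instead prove that step from scratch: assuming an exit, you take the first exit time $t_2$ (attained because $M_1^c$ is closed, with $x_{p_1}(t_2)\in M_1^c\cap\widetilde{M}_1$ since a solution of \eqref{DAEsysExtReg1eta} stays in the domain of $\widetilde{\Pi}$), then the last time $\tau<t_2$ at which the trajectory is in $K_r$, observe that on $(\tau,t_2]$ it lies in $K_r^c\cap\widetilde{M}_1$ where both the differentiability of $W$ and the inequality $\dot{W}_{\eqref{DAEsysExtReg1eta}}\le 0$ are available, conclude that $t\mapsto W(t,x_{p_1}(t))$ is non-increasing on $[\tau,t_2]$, and contradict the strict separation $W(\tau,x_{p_1}(\tau))<W(t_2,x_{p_1}(t_2))$. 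What your route buys is self-containedness and an explicit accounting of where each hypothesis enters (in particular, that the derivative condition is needed only on $K_r^c\cap\widetilde{M}_1$, which is exactly the set occupied on $(\tau,t_2]$); what the paper's route buys is brevity. Note also that both arguments rest on the same mild interpretive point, which you correctly flag: the lemma literally only posits \emph{some} closed set $K_r\subset M_1$ with $\rho(K_r,M_1^c)=r$, whereas both proofs use that these sets exhaust $M_1$ (i.e., that $K_r$ is the full inner $r$-core), which is exactly how the paper's proof reads them when it writes $M_1=\bigcup_{r>0}K_r$.
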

  \begin{proof}
Take arbitrary fixed $r>0$ (it is sufficient to consider $r<<1$; thus, $r$ is sufficiently small), and choose a function $W\in C(\mT\times X_1,\R)$ and a closed set $K_r=\{x_{p_1}\in M_1\mid \rho(K_r,M_1^c)=r\}$ that satisfies the conditions of the lemma. Note that $\rho(x_{p_1},K_r)=\inf\limits_{k\,\in\, K_r}\|x_{p_1}-k\|<r$ for any point $x_{p_1}\in M_1$. By \cite[p.~116, Lemma~1]{LaSal-Lef} each solution $x_{p_1}(t)$ of the equation \eqref{DAEsysExtReg1eta} which at some $t_0\in \mT$ is in the set $K_r$ can never thereafter leave $M_1$. Moreover, this holds for any (sufficiently small) $r>0$, that is, for any closed set $K_r$ specified above.

Since the set $M_1\subset X_1$ is open, then it can be represented as the union  $M_1=\bigcup\limits_{r>0}K_r$ of closed sets (an infinite family of closed sets) $K_r\subset M_1$ such that $\rho(K_r,M_1^c)=r>0$ (i.e., $K_r=\{x_{p_1}\in M_1\mid \rho(K_r,M_1^c)=r>0\}$), where $r$ is sufficiently small.  Consequently, the initial value $x_{p_1,0}$ of each solution $x_{p_1}(t)$ of \eqref{DAEsysExtReg1eta} satisfying the initial condition $x_{p_1}(t_0)=x_{p_1,0}\in M_1$ belongs to one of the sets $K_r$ for which the conditions of the lemma are fulfilled. Hence, this solution cannot leave $M_1$. Thus, each solution of \eqref{DAEsysExtReg1eta} satisfying the initial condition \eqref{RegDEeta_ini} can never leave $M_1$.
 \end{proof}

It follows from condition \ref{RegAttractor} that the conditions of Lemma \ref{Lem-UltimSet} holds and, therefore, the solution $x_{p_1}(t)$ of the IVP \eqref{DAEsysExtReg1eta}, \eqref{RegDEeta_ini} belongs to $M_1$ for all $t\in J_{max}$. Note that the set $\widetilde{M}_1$ specified above denotes the same set that is denoted by $\widetilde{M}_1$ in the proof of Theorem \ref{Th_GlobReg-LipschObl}.

The rest of the proof coincides with the proof of Theorem \ref{Th_GlobReg-LipschObl}.
 \end{proof}

 \begin{proof}[The proof of Corollary \ref{Coroll-GlobReg2}]
Let $\chi(t,v):=k(t)\,U(v)$ where ${k\in C(\mT,\R)}$ and ${U\in C(0,\infty)}$ satisfies the relation $\int\limits_{{\textstyle v}_0}^{\infty}\dfrac{dv}{U(v)} =\infty$ ($v_0>0$), then the differential inequality \eqref{L1v} does not have positive solutions with finite escape time (see, e.g., \cite{LaSal-Lef}). Hence,  condition \ref{ExtensReg} of Theorem~\ref{Th_GlobReg-LipschObl}, where the function $\chi$ has the form $\chi(t,v)=k(t)\,U(v)$, is fulfilled.  Thus, all conditions of Theorem~\ref{Th_GlobReg-LipschObl} hold.
 \end{proof}

 \begin{proof}[The proof of Corollary \ref{Coroll-UstLagrReg}]
This statement follows immediately from the proof of Theorem~\ref{Th_GlobReg-LipschObl}. Indeed, since $M_1$ and $M_2$ are bounded, then $\sup\limits_{t\in J_{max}}\|x_{p_1}(t)\|<\infty$ and $\sup\limits_{t\in J_{max}}\|\eta(t,x_{p_1}(t)\|<\infty$, where ${J_{max}=[t_0,\infty)}$ is the maximal interval of existence of the solution $x(t)=x_{p_1}(t)+\eta(t,x_{p_1}(t))$, and hence  ${\sup\limits_{t\in J_{max}}\|x(t)\|<\infty}$.
 \end{proof}

 \begin{proof}[The proof of Theorem \ref{Th_GlobReg-Obl}]
We will prove that condition \ref{InvReg-Lipsch} of Theorem~\ref{Th_GlobReg-LipschObl} holds if condition \ref{InvReg} of Theorem~\ref{Th_GlobReg-Obl} holds. Then all conditions of Theorem~\ref{Th_GlobReg-LipschObl} are fulfilled.

Since $f(t,x)$ has the continuous partial derivative with respect to $x$ on $\mT\times D$, then it satisfies locally a Lipschitz condition with respect to $x$ on $\mT\times D$.
Choose arbitrary fixed $t_*\in \mT$, ${x_*=x_{p_1}^*+x_{p_2}^*}$  such that $x_{p_1}^*\in M_1$, $x_{p_2}^*\in M_2$ and ${(t_*,x_*)\in L_{t_+}}$.
Take the operator $\Phi_{t_*,x_*}$ defined by \eqref{funcPhiInvReg} as the operator $\Phi_{t_*,x_*}$ appearing in condition \ref{InvReg-Lipsch} of Theorem~\ref{Th_GlobReg-LipschObl}. Then $\Phi_{t_*,x_*}=\partial_{x_{p_2}} \widetilde{F}(t_*,x_{p_1}^*,x_{p_2}^*)$ and $\Phi_{t_*,x_*}$ has the inverse $\Phi_{t_*,x_*}^{-1}\in \mathrm{L}(Y_2,X_2)$ due to condition \ref{InvSing} of Theorem~\ref{Th_GlobReg-Obl}.
Therefore, we have $\big\|\widetilde{F}(t,x_{p_1},x_{p_2}^1)- \widetilde{F}(t,x_{p_1},x_{p_2}^2)- \partial_{x_{p_2}}\widetilde{F}(t_*,x_{p_1}^*,x^*_{p_2}) [x_{p_2}^1-x_{p_2}^2]\big\|\le \int\limits_0^1\big\|\partial_{x_{p_2}}\widetilde{F}\big(t,x_{p_1},x_{p_2}^2+\theta (x_{p_2}^1-x_{p_2}^2)\big) - \partial_{x_{p_2}}\widetilde{F}(t_*,x_{p_1}^*,x^*_{p_2})\big\| d\theta\, \|x_{p_2}^1-x_{p_2}^2\|\le q(\delta,\varepsilon)\|x_{p_2}^1-x_{p_2}^2\|$ for each $(t,x_{p_1})\in \overline{U_\delta(t_*,x_{p_1}^*)}\subset \mT\times D_1$ and each $x_{p_2}^1,x_{p_2}^2\in \overline{U_\varepsilon(x_{p_2}^*)}\subset D_2$, where
$U_\delta(t_*,x_{p_1}^*)$, $U_\varepsilon(x_{p_2}^*)$ are some open neighborhoods of $(t_*,x_{p_1}^*)$, $x_{p_2}^*$, and $q(\delta,\varepsilon)=\sup\limits_{(t,x_{p_1})\in \overline{U_\delta(t_*,x_{p_1}^*)},\, \tilde{x}_{p_2}\in \overline{U_\varepsilon(x_{p_2}^*)}} \big\|\partial_{x_{p_2}}\widetilde{F}(t,x_{p_1},\tilde{x}_{p_2}) -\partial_{x_{p_2}}\widetilde{F}(t_*,x_{p_1}^*,x^*_{p_2})\big\|\to 0$ as $\delta,\varepsilon\to 0$ since the function $\partial_{x_{p_2}}\widetilde{F}(t,x_{p_1},x_{p_2})$ is continuous at the point $(t_*,x_{p_1}^*,x^*_{p_2})$. Consequently, condition \ref{InvReg-Lipsch} of Theorem~\ref{Th_GlobReg-LipschObl} is fulfilled.
 \end{proof}

Below we present the theorem that was proved in \cite{Fil.Sing-GN} for a singular DAE and, as well as Corollary~\ref{Coroll-UstLagrReg}, gives conditions for the Lagrange stability of the regular DAE \eqref{DAE}.

 \begin{theorem}\label{Th_RegUstLagr2}
Let $f\in C(\mT\times \Rn,\Rn)$, where $\mT=[t_+,\infty)\subseteq [0,\infty)$, and let the operator pencil $\lambda A+B$ be a regular pencil of index not higher than 1. Assume that condition \ref{SoglSing} of Theorem \ref{Th_GlobReg-LipschObl}, where $M_i=X_i$, $i=1,2$, holds and  condition \ref{InvReg-Lipsch} of Theorem \ref{Th_GlobReg-LipschObl} or condition \ref{InvReg} of Theorem \ref{Th_GlobReg-Obl}, where $D=\Rn$ and $M_i=D_i=X_i$, $i=1,2$, holds. In addition, let the following conditions be satisfied:
\begin{enumerate}
\addtocounter{enumi}{2}
\item\label{LagrReg} There exists a number ${R>0}$ \,\textup{(}$R$ can be sufficiently large\textup{)}, a function ${V\in C^1\big(\mT\times M_R,\R\big)}$ positive on $\mT\times M_R$, where $M_R=\{x_{p_1}\in X_1\mid \|x_{p_1}\|>R\}$, and a function ${\chi\in C(\mT\times (0,\infty),\R)}$  such that:\; $\lim\limits_{\|x_{p_1}\|\to+\infty}V(t,x_{p_1})=+\infty$ uniformly in $t$ on $\mT$;\; for each $t\in \mT$, $x_{p_1}\in M_R$, $x_{p_2}\in X_2$ such that $(t,x_{p_1}+x_{p_2})\in L_{t_+}$ the inequality \eqref{LagrDAEReg} is satisfied;  the differential inequality \eqref{L1v} does not have unbounded positive solutions for $t\in \mT$.

\item\label{LagrAs} For all $(t,x_{p_1}+x_{p_2})\in L_{t_+}$, for which $\|x_{p_1}\|\le M<\infty$\, ($M$ is an arbitrary constant), the inequality \,$\|x_{p_2}\|\le K_M$\, or the inequality \,$\|Q_2 f(t,x_{p_1}+x_{p_2})\|\le K_M$,\, where $K_M=K(M)<\infty$ is some constant, is satisfied.
 \end{enumerate}
Then the equation \eqref{DAE} is Lagrange stable (for each initial point $(t_0,x_0)\in L_{t_+}$).
 \end{theorem}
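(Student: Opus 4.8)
The plan is to deduce global existence from Theorem~\ref{Th_GlobReg-LipschObl} applied with $M_i=X_i$, and then to upgrade mere existence to boundedness by exploiting the two features of condition~\ref{LagrReg} that are strictly stronger than their counterparts in~\ref{ExtensReg}, together with the a priori algebraic bound in condition~\ref{LagrAs}. First I would observe that the present hypotheses contain all the hypotheses of Theorem~\ref{Th_GlobReg-LipschObl} with $M_i=D_i=X_i$, $i=1,2$: conditions~\ref{SoglReg} and \ref{InvReg-Lipsch} (or \ref{InvReg}, through Theorem~\ref{Th_GlobReg-Obl}) are assumed verbatim, condition~\ref{RegAttractor} is vacuous because $M_1=X_1$, and condition~\ref{LagrReg} implies condition~\ref{ExtensReg}. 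Indeed, uniform radial unboundedness on all of $\mT$ is stronger than the finite-interval version~\ref{ExtensReg1}; the inequality~\eqref{LagrDAEReg} is exactly~\ref{ExtensReg2}; and ``$\dot v\le\chi(t,v)$ has no unbounded positive solutions'' implies in particular that \eqref{L1v} has no positive solution with finite escape time, which is~\ref{GlobSolvReg}. Hence Theorem~\ref{Th_GlobReg-LipschObl} (Case~1) applies and produces a unique global solution $x(t)=x_{p_1}(t)+\eta(t,x_{p_1}(t))$ of the IVP \eqref{DAE}, \eqref{ini} on $[t_0,\infty)$, where $x_{p_1}(t)$ solves the reduced equation~\eqref{DAEsysExtReg1eta} with~\eqref{RegDEeta_ini} and $\eta$ is the globally defined implicit function of Lemma~\ref{Lemma-ImplicitFuncReg}.

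The core of the proof is the boundedness of the differential component $x_{p_1}(t)$. I would set $v(t)=V(t,x_{p_1}(t))$ on the (relatively open) subset $E\subseteq[t_0,\infty)$ where $\|x_{p_1}(t)\|>R$; there, by~\eqref{L1dVdtReg}, $v$ satisfies the differential inequality $\dot v\le\chi(t,v)$. Applying the comparison machinery of~\cite{LaSal-Lef} on the excursions of the trajectory into the region $\|x_{p_1}\|>R$, $v$ is dominated by a positive solution of $\dot u=\chi(t,u)$; since by condition~\ref{LagrReg} the inequality~\eqref{L1v} admits \emph{no unbounded positive solutions} on $\mT$, this comparison function stays bounded, so that $v(t)\le C$ for some constant $C<\infty$ on all of $E$, while $\|x_{p_1}(t)\|\le R$ off $E$. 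At this point the \emph{uniformity in $t$ over the whole unbounded interval} $\mT$ of the radial unboundedness enters decisively (this is precisely what distinguishes condition~\ref{LagrReg} from the finite-interval statement used for existence): because $V(t,x_{p_1})\to+\infty$ as $\|x_{p_1}\|\to+\infty$ uniformly in $t\in\mT$, the sublevel sets $\{x_{p_1}\in M_R\mid V(t,x_{p_1})\le C\}$ lie in a fixed ball of radius $R'\ge R$ independent of $t$. The bound $v(t)\le C$ therefore forces $\|x_{p_1}(t)\|\le R'$ on $E$, whence $\sup_{t\in[t_0,\infty)}\|x_{p_1}(t)\|\le\max(R,R')=:M<\infty$.

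Finally I would bound the algebraic component $x_{p_2}(t)=\eta(t,x_{p_1}(t))$. Since $(t,x(t))\in L_{t_+}$ for every $t$, the solution satisfies the algebraic equation~\eqref{DAEsysReg2}, i.e.\ $x_{p_2}(t)=B_2^{-1}Q_2f(t,x(t))$. As $\|x_{p_1}(t)\|\le M$, condition~\ref{LagrAs} yields, with $K_M=K(M)<\infty$, either $\|x_{p_2}(t)\|\le K_M$ directly, or $\|Q_2f(t,x(t))\|\le K_M$, in which case $\|x_{p_2}(t)\|\le\|B_2^{-1}\|\,K_M$. In either alternative $\sup_{t\in[t_0,\infty)}\|x_{p_2}(t)\|<\infty$, and using the chosen norm $\|x\|=\|x_{p_1}\|+\|x_{p_2}\|$ we conclude $\sup_{t\in[t_0,\infty)}\|x(t)\|\le M+\sup_{t}\|x_{p_2}(t)\|<\infty$. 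Combined with global existence, this means the solution is bounded and defined on $[t_0,\infty)$, i.e.\ the DAE \eqref{DAE} is Lagrange stable for every consistent initial point $(t_0,x_0)\in L_{t_+}$.

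The step I expect to be the main obstacle is the boundedness of $x_{p_1}$ in the second paragraph: one must run the comparison argument cleanly when the trajectory repeatedly enters and leaves $M_R$ (so that $v$ is only a solution of the inequality on the pieces of $E$), controlling the values of $V$ at re-entry, and one must make fully precise the passage from the bound $v(t)\le C$ to a bound on $\|x_{p_1}(t)\|$ via the uniform-in-$t$ radial unboundedness of $V$ over the infinite interval $\mT$, which is the feature that converts global existence into global boundedness.
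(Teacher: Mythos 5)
First, a point of reference: the paper does not actually prove this theorem — it is presented, in the sentence immediately preceding it, as a result ``that was proved in \cite{Fil.Sing-GN} for a singular DAE'', so your attempt can only be measured against the paper's general framework rather than an in-paper argument. Within that framework, your first and third steps are sound and are exactly what the framework supplies: with $M_i=X_i$, condition \ref{RegAttractor} of Theorem~\ref{Th_GlobReg-LipschObl} is vacuous, condition \ref{LagrReg} implies condition \ref{ExtensReg} (a positive solution of \eqref{L1v} with finite escape time is in particular an unbounded one), so Case~1 of the proof of Theorem~\ref{Th_GlobReg-LipschObl} yields the unique global solution $x(t)=x_{p_1}(t)+\eta(t,x_{p_1}(t))$; and once $\sup_t\|x_{p_1}(t)\|\le M$ is known, the identity $x_{p_2}(t)=B_2^{-1}Q_2f(t,x(t))$ coming from \eqref{DAEsysReg2} together with condition \ref{LagrAs} bounds $x_{p_2}(t)$, hence $x(t)$, in the norm $\|x\|=\|x_{p_1}\|+\|x_{p_2}\|$.

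The genuine gap is the step you yourself flag: the assertion that $v(t)=V\big(t,x_{p_1}(t)\big)\le C$ with a \emph{single} constant $C$ on all of $E=\{t:\|x_{p_1}(t)\|>R\}$. The comparison principle operates excursion by excursion: on each maximal subinterval $(a_k,b_k)$ of $E$ it gives $v(t)\le u_k(t)$, where $u_k$ is the maximal solution of $\dot u=\chi(t,u)$ issued from the value of $V$ at the $k$-th re-entry into $M_R$ (a point with $\|x_{p_1}\|=R$). Condition \ref{LagrReg} makes each $u_k$ bounded, but the bound depends on $k$, and nothing in the hypotheses controls the re-entry values uniformly in $k$: the assumptions on $V$ are one-sided (positivity, and the uniform radial unboundedness is a \emph{lower} bound), so $V$ may grow without bound in $t$ along the cylinder $\{\|x_{p_1}\|=R\}$, in which case $\sup_k\sup_t u_k(t)=\infty$ and no contradiction with $\|x_{p_1}\|$ having peaks tending to $+\infty$ is obtained. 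Your argument does close the case in which the trajectory eventually remains in $M_R$: there $v$ is a positive solution of \eqref{L1v} on a half-line $[t_1,\infty)$, and if $x_{p_1}$ were unbounded, the uniform-in-$t$ (over all of $\mT$) radial unboundedness would force $v$ to be unbounded, contradicting condition \ref{LagrReg}. So the proof should split explicitly into this case and the case of infinitely many returns into $\{\|x_{p_1}\|\le R\}$; the latter case is the heart of the theorem and requires an idea not present in your proposal — classically one assumes in addition an upper bound $V(t,x_{p_1})\le b(\|x_{p_1}\|)$, which makes the re-entry values uniformly bounded and lets the comparison argument run with a common constant, but condition \ref{LagrReg} as stated supplies no such bound. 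Until that case is treated, the boundedness claim, and with it Lagrange stability, is not established.
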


  \subsection{Global solvability of singular (nonregular) semilinear DAEs}\label{SectGlobSing}

Below, the projectors and subspaces, described in Appendix \ref{Appendix}, as well as the definitions and constructions, given in Section \ref{Preliminaries}, are used. Recall that $D_{s_i}=S_iD$, $D_i=P_iD$, $i=1,2$ (see \eqref{Dssrr}).

 \begin{theorem}\label{Th_GlobSing-LipschObl}
Let $f\in C(\mT\times D,\Rm)$, where $D\subseteq \Rn$ is some open set and $\mT=[t_+,\infty)\subseteq [0,\infty)$, and let the operator pencil $\lambda A+B$ be a singular pencil such that its regular block $\lambda A_r+B_r$, where $A_r$, $B_r$ are defined in \eqref{srAB}, is a regular pencil of index not higher than 1. Assume that there exists an open set $M_{s1}\subseteq D_{s_1}\dot+D_1$ and sets $M_{s_2}\subseteq D_{s_2}$, $M_2\subseteq D_2$ such that the following holds:
\begin{enumerate}
\item\label{SoglSing}  For any fixed ${t\in \mT}$, ${x_{s_1}+x_{p_1}\in M_{s1}}$, $x_{s_2}\in M_{s_2}$ there exists a unique ${x_{p_2}\in M_2}$ such that ${(t,x_{s_1}+x_{s_2}+x_{p_1}+x_{p_2})\in L_{t_+}}$  \textup{(}the manifold $L_{t_+}$ has the form \eqref{L_tSing} where $t_*=t_+$\textup{)}.

\item\label{InvSing-Lipsch} A function $f(t,x)$ satisfies locally a Lipschitz condition with respect to $x$ on $\mT\times D$.\, For~any fixed ${t_*\in \mT}$, ${x_*=x_{s_1}^*+x_{s_2}^*+x_{p_1}^*+x_{p_2}^*}$ \,\textup{(}${x_{s_i}^*=S_ix_*}$, ${x_{p_i}^*=P_ix_*}$, ${i=1,2}$\textup{)}\, such that ${x_{s_1}^*+x_{p_1}^*\in M_{s1}}$, ${x_{s_2}^*\in M_{s_2}}$, ${x_{p_2}^*\in M_2}$ and  ${(t_*,x_*)\in L_{t_+}}$, there exists a neighborhood $N_\delta(t_*,x_{s_1}^*,x_{s_2}^*,x_{p_1}^*)= U_{\delta_1}(t_*)\times U_{\delta_2}(x_{s_1}^*)\times N_{\delta_3}(x_{s_2}^*)\times U_{\delta_4}(x_{p_1}^*)\subset \mT\times D_{s_1}\times D_{s_2}\times D_1$,
    an open neighborhood $U_\varepsilon(x_{p_2}^*)\subset D_2$ (the numbers $\delta, \varepsilon>0$ depend on the choice of $t_*$, $x_*$) and an invertible operator $\Phi_{t_*,x_*}\in \mathrm{L}(X_2,Y_2)$ such that for each $(t,x_{s_1},x_{s_2},x_{p_1})\in N_\delta(t_*,x_{s_1}^*,x_{s_2}^*,x_{p_1}^*)$ and each $x_{p_2}^i\in U_\varepsilon(x_{p_2}^*)$, $i=1,2$, the mapping
     \begin{multline}\label{tildePsiSing}
    \widetilde{\Psi}(t,x_{s_1},x_{s_2},x_{p_1},x_{p_2}):= Q_2f(t,x_{s_1}+x_{s_2}+x_{p_1}+x_{p_2})-   \\
    -B\big|_{X_2}x_{p_2}\colon \mT\times D_{s_1}\times D_{s_2}\times D_1\times D_2\to Y_2
     \end{multline}
   satisfies the inequality
    \begin{equation}\label{ContractiveMapPsi}
   \|\widetilde{\Psi}(t,x_{s_1},x_{s_2},x_{p_1},x_{p_2}^1)- \widetilde{\Psi}(t,x_{s_1},x_{s_2},x_{p_1},x_{p_2}^2)-\Phi_{t_*,x_*} [x_{p_2}^1-x_{p_2}^2]\|\le  q(\delta,\varepsilon)\|x_{p_2}^1-x_{p_2}^2\|,
    \end{equation}
    where $q(\delta,\varepsilon)$ is such that  $\lim\limits_{\delta,\,\varepsilon\to 0} q(\delta,\varepsilon)<\|\Phi_{t_*,x_*}^{-1}\|^{-1}$.

\item\label{SingAttractor}  If $M_{s1}\neq X_{s_1}\dot+ X_1$, then the following holds.

  The component $x_{s_1}(t)+x_{p_1}(t)=(S_1+P_1)x(t)$ of each solution $x(t)$ with the initial point $(t_0,x_0)\in L_{t_+}$, for which $(S_1+P_1)x_0\in M_{s1}$, $S_2x_0\in M_{s_2}$ and $P_2x_0\in M_2$, can never leave $M_{s1}$ \textup{(}i.e., it remains in $M_{s1}$ for all $t$ from the maximal interval of existence of the solution\textup{)}.

\item\label{ExtensSing} If $M_{s1}$ is unbounded, then the following holds.

  There exists a number ${R>0}$ \,\textup{(}$R$ can be sufficiently large\textup{)}, a function ${V\in C^1\big(\mT\times M_R,\R\big)}$ positive on $\mT\times M_R$, where $M_R=\{(x_{s_1},x_{p_1})\in X_{s_1}\times X_1\mid x_{s_1}+x_{p_1}\in M_{s1},\; \|x_{s_1}+x_{p_1}\|> R\}$, and a function ${\chi\in C(\mT\times (0,\infty),\R)}$  such that:
   \begin{enumerate}[label={\upshape(\ref{ExtensSing}.\alph*)}, ref={(\ref{ExtensSing}.\alph*)},itemsep=2pt,parsep=0pt,topsep=2pt]
  \item\label{ExtensSing1}
   $\lim\limits_{\|(x_{s_1},x_{p_1})\|\to+\infty}V(t,x_{s_1},x_{p_1})=+\infty$ uniformly in $t$ on each finite interval $[a,b)\subset \mT$;

  \item\label{ExtensSing2} for each $t\in \mT$, $(x_{s_1},x_{p_1})\in M_R$, $x_{s_2}\in M_{s_2}$, $x_{p_2}\in M_2$  such that $(t,x_{s_1}+x_{s_2}+x_{p_1}+x_{p_2})\in L_{t_+}$, the derivative \eqref{dVDAEsing} of the function $V$ along the trajectories of the equations \eqref{DAEsysExtDE1}, \eqref{DAEsysExtDE2} satisfies the inequality
   \begin{equation}\label{LagrDAEsing}
  \dot{V}_{\eqref{DAEsysExtDE1},\eqref{DAEsysExtDE2}}(t,x_{s_1},x_{p_1})\le \chi\big(t,V(t,x_{s_1},x_{p_1})\big);
   \end{equation}

  \item\label{GlobSolv} the differential inequality \eqref{L1v} does not have positive solutions with finite escape time.
   \end{enumerate}
 \end{enumerate}
Then for each initial point ${(t_0,x_0)\in L_{t_+}}$ such that $(S_1+P_1)x_0\in M_{s1}$, $S_2x_0\in M_{s_2}$ and $P_2x_0\in M_2$, the IVP \eqref{DAE}, \eqref{ini} has a unique global solution $x(t)$ for which the choice of the function $\phi_{s_2}\in C([t_0,\infty),M_{s_2})$ with the initial value  ${\phi_{s_2}(t_0)=S_2 x_0}$ uniquely defines the component $S_2x(t)=\phi_{s_2}(t)$ when ${\rank(\lambda A+B)<n}$ \,\textup{(}when ${\rank(\lambda A+B)=n}$, the component $S_2 x$ is absent\textup{)}.
 \end{theorem}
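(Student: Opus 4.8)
The plan is to follow the same scheme as in the proof of Theorem~\ref{Th_GlobReg-LipschObl}, adapted to the richer block structure of the singular case, by working with the equivalent system \eqref{DAEsysExtDE1}--\eqref{DAEsysExtAE2}. The differential equations \eqref{DAEsysExtDE1}, \eqref{DAEsysExtDE2} govern the dynamic components $x_{s_1},x_{p_1}$; the algebraic equation \eqref{DAEsysExtAE1} is to be solved for $x_{p_2}$; the component $x_{s_2}$ is free (this is the underdetermined, purely singular direction); and \eqref{DAEsysExtAE2} is the overdetermined constraint. First I would express $x_{p_2}$ as a function $\eta$ of $(t,x_{s_1},x_{s_2},x_{p_1})$ from \eqref{DAEsysExtAE1}, then fix the free direction by an arbitrary $\phi_{s_2}\in C([t_0,\infty),M_{s_2})$ with $\phi_{s_2}(t_0)=S_2x_0$, substitute $x_{s_2}=\phi_{s_2}(t)$ and $x_{p_2}=\eta(t,x_{s_1},\phi_{s_2}(t),x_{p_1})$ into \eqref{DAEsysExtDE1}, \eqref{DAEsysExtDE2}, and thereby reduce the DAE to a closed non-autonomous ODE system in $(x_{s_1},x_{p_1})$ whose global solvability is what must be proved.

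The first step is to establish the implicit function $\eta$. Writing \eqref{DAEsysExtAE1} as $\widetilde{\Psi}=0$ with $\widetilde{\Psi}$ from \eqref{tildePsiSing}, condition~\ref{InvSing-Lipsch} provides, near each consistent point, the invertible operator $\Phi_{t_*,x_*}$ and the contraction estimate \eqref{ContractiveMapPsi}, so a fixed-point argument (the analogue of Lemma~\ref{Lemma-ImplicitFuncReg}, which itself follows Lemma~3.1 of \cite{Fil.Sing-GN}) yields a unique local solution $x_{p_2}=\nu(t,x_{s_1},x_{s_2},x_{p_1})$, continuous and locally Lipschitz in the spatial variables. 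These local solutions are glued into a single continuous, locally Lipschitz map $\eta\colon\mT\times M_{s1}\times M_{s_2}\to M_2$ exactly as in the regular case, the gluing being forced by the global uniqueness in condition~\ref{SoglSing}. Crucially, since the manifold \eqref{L_tSing} encodes both \eqref{DAEsysExtAE1} and \eqref{DAEsysExtAE2}, condition~\ref{SoglSing} guarantees that for every admissible $(t,x_{s_1}+x_{p_1},x_{s_2})$ the unique $x_{p_2}=\eta$ places the point on $L_{t_+}$; hence the overdetermined constraint \eqref{DAEsysExtAE2} is satisfied automatically along the construction and requires no separate argument.

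Next I would substitute $x_{s_2}=\phi_{s_2}(t)$ and $x_{p_2}=\eta(t,x_{s_1},\phi_{s_2}(t),x_{p_1})$ into \eqref{DAEsysExtDE1}, \eqref{DAEsysExtDE2}. Because $\phi_{s_2}$ is continuous on $[t_0,\infty)$ and $\eta$ is continuous and locally Lipschitz in $(x_{s_1},x_{p_1})$, the resulting right-hand side is continuous in $t$ and locally Lipschitz in $(x_{s_1},x_{p_1})$, so the reduced ODE with initial data $x_{s_1}(t_0)+x_{p_1}(t_0)=(S_1+P_1)x_0$ has a unique solution on a maximal interval $J_{max}\subseteq[t_0,\infty)$. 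It then remains to prove $J_{max}=[t_0,\infty)$, and here I would reproduce the case analysis of Theorem~\ref{Th_GlobReg-LipschObl}: in Case~1, $M_{s1}=X_{s_1}\dot+X_1$, one uses the positive function $V$ of condition~\ref{ExtensSing}, the estimate \eqref{LagrDAEsing} for the derivative $\dot{V}_{\eqref{DAEsysExtDE1},\eqref{DAEsysExtDE2}}$ in \eqref{dVDAEsing}, and the hypothesis \ref{GlobSolv} that \eqref{L1v} admits no positive solution with finite escape time, together with \cite[Ch.~IV, Theorem~XIII]{LaSal-Lef}, to exclude blow-up; in Case~2, $M_{s1}\subsetneqq X_{s_1}\dot+X_1$, condition~\ref{SingAttractor} confines $(S_1+P_1)x(t)$ to $M_{s1}$, and one splits into subcases according to whether $M_{s1}$ and its complement are bounded, as before. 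Once $(x_{s_1},x_{p_1})$ exists on $[t_0,\infty)$, the full solution $x(t)=x_{s_1}(t)+\phi_{s_2}(t)+x_{p_1}(t)+\eta(t,x_{s_1}(t),\phi_{s_2}(t),x_{p_1}(t))$ is global, since $\phi_{s_2}$ is defined on all of $[t_0,\infty)$ by choice and $x_{p_2}$ is its continuous image.

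The main obstacle will be the construction and gluing of the global map $\eta$ in the presence of the free component $x_{s_2}$. Unlike the regular case, $\eta$ carries an extra argument $x_{s_2}$ ranging over $M_{s_2}$, and after substitution this enters the reduced ODE only through the merely continuous path $\phi_{s_2}(t)$; I must check that this does not destroy the local Lipschitz dependence in $(x_{s_1},x_{p_1})$ needed for uniqueness, nor the applicability of the comparison estimate \eqref{LagrDAEsing}, which controls only the pair $(x_{s_1},x_{p_1})$. The key structural point making the argument go through is that the singular direction $x_{s_2}$ and the slaved direction $x_{p_2}=\eta$ need not be controlled by a Lyapunov function at all: the former is prescribed globally and the latter is determined pointwise by the rest, so global existence of the whole solution reduces to global existence of $(x_{s_1},x_{p_1})$, precisely the quantity that $V$ governs.
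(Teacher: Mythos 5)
Your proposal is correct and takes essentially the same approach as the paper, whose proof of this theorem simply states that the argument of Theorem \ref{Th_GlobReg-LipschObl} (and, for $D=\Rn$, $M_{s1}=X_{s_1}\dot+X_1$, $M_2=X_2$, of \cite[Theorems 3.4, 3.2]{Fil.Sing-GN}) carries over to the singular case. Your outline --- solving \eqref{DAEsysExtAE1} locally via the contraction estimate of condition \ref{InvSing-Lipsch}, gluing the local solutions by the uniqueness in condition \ref{SoglSing} (which makes \eqref{DAEsysExtAE2} hold automatically, since $L_{t_+}$ encodes both algebraic equations), prescribing the free component by $\phi_{s_2}$, and running the same maximal-interval case analysis on $M_{s1}$ using conditions \ref{SingAttractor}, \ref{ExtensSing} and \cite[Ch.~IV, Theorem~XIII]{LaSal-Lef} --- is precisely that adaptation.
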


 \begin{corollary}\label{Coroll-GlobSing1}
Theorem~\ref{Th_GlobSing-LipschObl} remains valid if condition \ref{SingAttractor} is replaced by
\begin{enumerate}[label={\upshape\arabic*.}, ref={\upshape\arabic*}, itemsep=3pt,parsep=0pt,topsep=4pt,leftmargin=0.6cm]
 \addtocounter{enumi}{2}
\item\label{SingAttractCoroll} In the case when $M_{s1}\neq X_{s_1}\dot+ X_1$, the following holds.

Let ${D_{\Upsilon,s_1}\subseteq X_{s_1}}$, ${D_{\Upsilon,i}\subseteq X_i}$, $i=1,2$, be sets such that the function $\Upsilon$ of the form \eqref{Upsilon} is defined and continuous for all ${t\in \mT}$, ${x_{s_1}\in D_{\Upsilon,s_1}}$, ${x_{s_2}\in D_{s_2}}$, ${x_{p_i}\in D_{\Upsilon,i}}$, ${i=1,2}$, where $D_{\Upsilon,s_1}\supset  D_{s_1}$, $D_{\Upsilon,i}\supset D_i$ and $D_{\Upsilon,s_1}\times D_{s_2}\times D_{\Upsilon,1}\times D_{\Upsilon,2} \ne D_{s_1}\times D_{s_2}\times D_1\times D_2$, if the domain of definition of $\Upsilon$ can be extended to ${\mT\times D_{\Upsilon,s_1}\times D_{s_2}\times D_{\Upsilon,1}\times D_{\Upsilon,2}}$ in this way, and ${D_{\Upsilon,s_1}= D_{s_1}}$, ${D_{\Upsilon,i}= D_i}$ otherwise.\,
Let $D_{c,s1}\subseteq X_{s_1}\dot+X_1$, $D_{c,2}\subseteq X_2$ be sets such that for any fixed ${t\in \mT}$, ${x_{s_1}+x_{p_1}\in D_{c,s1}\supset M_{s1}}$, $x_{s_2}\in M_{s_2}$ there exists a unique $x_{p_2}\in D_{c,2}\supset M_2$ such that ${(t,x_{s_1}+x_{s_2}+x_{p_1}+x_{p_2})\in L_{t_+}}$ and $D_{c,s1}\ne M_{s1}$ or $D_{c,2}\ne M_2$ if such sets exist, and $D_{c,s1}=M_{s1}$, $D_{c,2}= M_2$ otherwise.
Further, let ${\widetilde{D}_{s1}:= D_{\Upsilon,s_1}\dot+D_{\Upsilon,1}\cap D_{c,s1}}$, ${\widetilde{D_2}:= D_{\Upsilon,2}\cap D_{c,2}}$ if the function $\Upsilon$ depends on $x_{p_2}$, and ${\widetilde{D}_{s1}:= D_{\Upsilon,s_1}\dot+D_{\Upsilon,1}}$ if $\Upsilon$ does not depend on $x_{p_2}$.

Below, $\Upsilon$ is considered as the function \eqref{Upsilon} with the domain of definition $\mT\times S_1\widetilde{D}_{s1}\times M_{s_2}\times P_1\widetilde{D}_{s1}\times \widetilde{D_2}$ if it depends on $x_{p_2}$ and $\mT\times S_1\widetilde{D}_{s1}\times M_{s_2}\times P_1\widetilde{D}_{s1}$  if it does not depend on $x_{p_2}$.

Assume that there exists a function $W\in C(\mT\times X_{s_1}\times X_1,\R)$ and for each sufficiently small number ${r>0}$  there exists a closed set $K_r=\{x_{s_1}+x_{p_1}\in M_{s1}\mid \rho(K_r,M_{s1}^c)=r\}$  \,\textup{(}${M_{s1}^c=(X_{s_1}\dot+ X_1)\setminus M_{s1}}$,\, $\rho(K_r,M_{s1}^c)=\! \inf\limits_{k\,\in\, K_r,\, m\,\in\, M_{s1}^c} \|m-k\|$\,\textup{)} such that
 $$
W(t_1,x_{s_1}^1,x_{p_1}^1)<W(t_2,x_{s_1}^2,x_{p_1}^2)
 $$
for every $x_{s_1}^1+x_{p_1}^1\in K_r$, $x_{s_1}^2+x_{p_1}^2\in M_{s1}^c\cap \widetilde{D}_{s1}$ and $t_1,t_2\in \mT$ such that $t_1\le t_2$, and, in addition, $W(t,x_{s_1},x_{p_1})$ has the continuous partial derivatives on $\mT\times S_1K_r^c\times P_1K_r^c$ \,\textup{(}${K_r^c=(X_{s_1}\dot+ X_1)\setminus K_r}$\textup{)} and the inequality
 \begin{equation}\label{SingAttractIneq}
\dot{W}_{\eqref{DAEsysExtDE1},\eqref{DAEsysExtDE2}}(t,x_{s_1},x_{p_1})=\partial_t W(t,x_{s_1},x_{p_1})+ \partial_{(x_{s_1},x_{p_1})} W(t,x_{s_1},x_{p_1})\cdot \Upsilon(t,x_{s_1},x_{s_2},x_{p_1},x_{p_2})\le 0
 \end{equation}
holds for each ${t\in \mT}$, ${x_{s_1}+x_{p_1}\in K_r^c\cap \widetilde{D}_{s1}}$, ${x_{s_2}\in M_{s_2}}$, ${x_{p_2}\in \widetilde{D_2}}$ such that ${(t,x_{s_1}+x_{s_2}+x_{p_1}+x_{p_2})\in L_{t_+}}$ \textup{(}if $\Upsilon$ does not depend on $x_{p_2}$, then \eqref{SingAttractIneq} holds for each $t\in \mT$, ${x_{s_1}+x_{p_1}\in K_r^c\cap \widetilde{D}_{s1}}$, ${x_{s_2}\in M_{s_2}}$\textup{)}.
\end{enumerate}
 \end{corollary}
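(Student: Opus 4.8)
The plan is to follow the same route used for Corollary~\ref{Coroll-GlobReg1} in the regular case: condition~\ref{SingAttractor} of Theorem~\ref{Th_GlobSing-LipschObl} enters its proof only to guarantee that the component $(S_1+P_1)x(t)=x_{s_1}(t)+x_{p_1}(t)$ of the solution never leaves $M_{s1}$, so it suffices to show that condition~\ref{SingAttractCoroll} already forces this invariance; everything else in the proof of Theorem~\ref{Th_GlobSing-LipschObl} then carries over unchanged. First I would recall the reduction produced in that proof. Conditions~\ref{SoglSing} and~\ref{InvSing-Lipsch}, via the implicit-function argument (the singular analogue of Lemma~\ref{Lemma-ImplicitFuncReg}), furnish a continuous, locally Lipschitz function $x_{p_2}=\eta(t,x_{s_1},x_{s_2},x_{p_1})$ solving the algebraic part $\widetilde{\Psi}=0$ on $\mT\times M_{s1}\times M_{s_2}$. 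Substituting $\eta$ into \eqref{DAEsysExtDE1}, \eqref{DAEsysExtDE2} turns them into a reduced non-autonomous system $\dot{x}_{s_1}=\widetilde{\Upsilon}_1$, $\dot{x}_{p_1}=\widetilde{\Upsilon}_2$ for the pair $(x_{s_1},x_{p_1})$, where $\widetilde{\Upsilon}(t,x_{s_1},x_{s_2},x_{p_1})=\Upsilon(t,x_{s_1},x_{s_2},x_{p_1},\eta(\cdot))$ and the remaining coordinate enters as a prescribed input $x_{s_2}=\phi_{s_2}(t)\in M_{s_2}$.

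Next I would prove the singular counterpart of Lemma~\ref{Lem-UltimSet}: under the hypotheses of condition~\ref{SingAttractCoroll}, each solution of the reduced system starting with $(S_1+P_1)x_0\in M_{s1}$ cannot leave $M_{s1}$. The mechanism is identical to the regular case. Before that, I would dispose of the domain bookkeeping exactly as done there: the sets $D_{\Upsilon,\bullet}$, $D_{c,\bullet}$, $\widetilde{D}_{s1}$, $\widetilde{D_2}$ introduced in condition~\ref{SingAttractCoroll} are chosen so that $\widetilde{\Upsilon}$ is defined and continuous on a set $\mT\times\widetilde{M}_{s1}$ with $\widetilde{M}_{s1}\supseteq M_{s1}$ (and, when $\widetilde{M}_{s1}\supseteq\overline{M_{s1}}$, so that the extension theorems locate the right endpoint of the maximal interval either at $+\infty$, at a blow-up, or on $\partial M_{s1}$). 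The invariance itself then comes from \cite[p.~116, Lemma~1]{LaSal-Lef}: for each sufficiently small $r>0$ the closed set $K_r=\{x_{s_1}+x_{p_1}\in M_{s1}\mid\rho(K_r,M_{s1}^c)=r\}$ together with the function $W$ satisfy the monotonicity $W(t_1,\cdot)<W(t_2,\cdot)$ across $K_r$ and $M_{s1}^c\cap\widetilde{D}_{s1}$, as well as $\dot{W}_{\eqref{DAEsysExtDE1},\eqref{DAEsysExtDE2}}\le 0$ on $K_r^c\cap\widetilde{D}_{s1}$, so any trajectory that is in $K_r$ at some instant can never afterwards exit $M_{s1}$. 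Writing the open set $M_{s1}=\bigcup_{r>0}K_r$ as a union of such closed sets, every admissible initial point lies in some $K_r$, whence the solution stays in $M_{s1}$ throughout its maximal interval $J_{max}$.

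The one genuinely new feature, and the step I expect to require the most care, is the free input $x_{s_2}=\phi_{s_2}(t)$. Unlike the regular reduced equation \eqref{DAEsysExtReg1eta}, here the reduced vector field carries the arbitrary component $\phi_{s_2}\in C([t_0,\infty),M_{s_2})$, so the differential inequality \eqref{SingAttractIneq} must be available along the true trajectory, i.e. with $x_{s_2}=\phi_{s_2}(t)\in M_{s_2}$; this is precisely why condition~\ref{SingAttractCoroll} demands \eqref{SingAttractIneq} for \emph{every} $x_{s_2}\in M_{s_2}$ (and every admissible $x_{p_2}$ when $\Upsilon$ depends on $x_{p_2}$, respectively with $x_{p_2}$ absent otherwise). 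Because the monotone-$W$ criterion of \cite{LaSal-Lef} is insensitive to which admissible $\phi_{s_2}$ is chosen, the invariance holds uniformly over all such inputs, and the two subcases ($\Upsilon$ depending on $x_{p_2}$ or not) only change the domain on which $\widetilde{\Upsilon}$ and $\dot{W}$ are evaluated, not the argument. Having established that $(S_1+P_1)x(t)\in M_{s1}$ on $J_{max}$, the remainder — the construction of $\eta$, passage to the reduced ODE, the use of condition~\ref{ExtensSing} with \cite[Chapter~IV, Theorem~XIII]{LaSal-Lef} to exclude finite escape, and the uniqueness and global existence conclusion together with the free choice of $S_2x(t)=\phi_{s_2}(t)$ — coincides verbatim with the proof of Theorem~\ref{Th_GlobSing-LipschObl}.
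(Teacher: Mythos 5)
Your proposal is correct and follows essentially the same route as the paper: the paper's own proof of Corollary~\ref{Coroll-GlobSing1} simply states that it is carried out in a similar way as the proof of Corollary~\ref{Coroll-GlobReg1}, i.e., exactly the reduction via the implicit function $\eta$, the domain bookkeeping, the singular analogue of Lemma~\ref{Lem-UltimSet} built on \cite[p.~116, Lemma~1]{LaSal-Lef} together with the representation $M_{s1}=\bigcup_{r>0}K_r$, and then the unchanged remainder of the proof of Theorem~\ref{Th_GlobSing-LipschObl}. Your additional observation that the free component $x_{s_2}=\phi_{s_2}(t)$ is handled because condition~\ref{SingAttractCoroll} requires \eqref{SingAttractIneq} for every $x_{s_2}\in M_{s_2}$ is a correct and useful elaboration of a point the paper leaves implicit.
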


 \begin{corollary}\label{Coroll-GlobSing2}
Theorem~\ref{Th_GlobSing-LipschObl} remains valid if condition \ref{ExtensSing} is replaced by
\begin{enumerate}[label={\upshape\arabic*.}, ref={\upshape\arabic*}, itemsep=3pt,parsep=0pt,topsep=4pt,leftmargin=0.6cm]
 \addtocounter{enumi}{3}
\item\label{ExtensSingCoroll} If $M_{s1}$ is unbounded, then the following holds.

  There exists a number ${R>0}$, a function ${V\in C^1\big(\mT\times M_R,\R\big)}$ positive on $\mT\times M_R$, where $M_R=\{(x_{s_1},x_{p_1})\in X_{s_1}\times X_1\mid x_{s_1}+x_{p_1}\in M_{s1},\; \|x_{s_1}+x_{p_1}\|> R\}$, and functions ${k\in C(\mT,\R)}$, ${U\in C(0,\infty)}$ such that:\; $\lim\limits_{\|(x_{s_1},x_{p_1})\|\to+\infty}V(t,x_{s_1},x_{p_1})=+\infty$ uniformly in $t$ on each finite interval $[a,b)\subset \mT$;\;
  for each ${t\in \mT}$, ${(x_{s_1},x_{p_1})\in M_R}$, ${x_{s_2}\in M_{s_2}}$, ${x_{p_2}\in M_2}$  such that $(t,x_{s_1}+x_{s_2}+x_{p_1}+x_{p_2})\in L_{t_+}$, the inequality \,${\dot{V}_{\eqref{DAEsysExtDE1},\eqref{DAEsysExtDE2}}(t,x_{s_1},x_{p_1})\le k(t)\, U\big(V(t,x_{p_1})\big)}$\, holds;\;  $\int\limits_{{\textstyle v}_0}^{\infty}\dfrac{dv}{U(v)} =\infty$\, \textup{(}$v_0>0$ is a constant\textup{)}.
\end{enumerate}
 \end{corollary}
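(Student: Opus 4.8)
The plan is to observe that Corollary~\ref{Coroll-GlobSing2} is the singular-DAE analogue of Corollary~\ref{Coroll-GlobReg2} and therefore admits essentially the same short proof: condition~\ref{ExtensSingCoroll} is merely a concrete sufficient form of the more general condition~\ref{ExtensSing}, obtained by choosing $\chi$ in a separated form. Accordingly, I would set $\chi(t,v):=k(t)\,U(v)$, with $k\in C(\mT,\R)$ and $U\in C(0,\infty)$ as provided by condition~\ref{ExtensSingCoroll}, and then verify that, with this $\chi$, all three parts of condition~\ref{ExtensSing} hold for the same $R$, $M_R$ and $V$.

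First, conditions~\ref{ExtensSing1} and~\ref{ExtensSing2} are immediate. The coercivity of $V$ (part~\ref{ExtensSing1}) is literally one of the hypotheses of condition~\ref{ExtensSingCoroll}, and the assumed differential inequality $\dot{V}_{\eqref{DAEsysExtDE1},\eqref{DAEsysExtDE2}}(t,x_{s_1},x_{p_1})\le k(t)\,U\big(V(t,x_{s_1},x_{p_1})\big)$ is precisely the inequality~\eqref{LagrDAEsing} read off with $\chi(t,v)=k(t)\,U(v)$.

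The only substantive step is part~\ref{GlobSolv}, namely that the scalar differential inequality~\eqref{L1v}, i.e.\ $\dot{v}\le k(t)\,U(v)$, has no positive solution with finite escape time. This follows from the classical comparison argument based on the integral condition $\int_{v_0}^{\infty}dv/U(v)=\infty$: if a positive solution $v(t)$ existed on a finite interval $[a,T)$ with $T<\infty$ and $v(t)\to+\infty$ as $t\to T-0$, then dividing by $U(v)>0$ and integrating would give $\int_{v(a)}^{v(t)}dw/U(w)\le\int_a^t k(s)\,ds$, whose left-hand side tends to $+\infty$ while the right-hand side stays bounded by $\int_a^T k(s)\,ds<\infty$ (as $k$ is continuous and $T<\infty$), a contradiction. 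This is the standard result I would cite from \cite{LaSal-Lef}.

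Having thus verified that condition~\ref{ExtensSing} holds with $\chi(t,v)=k(t)\,U(v)$, I conclude that all hypotheses of Theorem~\ref{Th_GlobSing-LipschObl} are satisfied, so its conclusion---existence of a unique global solution with the stated structure of the $S_2$-component---carries over verbatim. I expect no genuine obstacle here: the entire argument is a one-line reduction to the theorem together with the elementary comparison lemma, mirroring the already-given proof of Corollary~\ref{Coroll-GlobReg2}.
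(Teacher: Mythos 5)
Your proposal is correct and takes essentially the same approach as the paper: there, Corollary~\ref{Coroll-GlobSing2} is proved exactly like Corollary~\ref{Coroll-GlobReg2}, namely by setting $\chi(t,v):=k(t)\,U(v)$, invoking \cite{LaSal-Lef} for the fact that $\dot{v}\le k(t)\,U(v)$ with $\int_{v_0}^{\infty}dv/U(v)=\infty$ admits no positive solutions with finite escape time, and then applying Theorem~\ref{Th_GlobSing-LipschObl}. Your only difference is that you also spell out the standard comparison argument that the paper merely cites, which is a harmless (and correct) elaboration.
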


 \begin{corollary}\label{Coroll-UstLagrSing}
If in the conditions of Theorem~\ref{Th_GlobSing-LipschObl} the sets $M_{s1}$, $M_{s_2}$ and $M_2$ are bounded, then the equation \eqref{DAE} is Lagrange stable for the initial points $(t_0,x_0)\in L_{t_+}$ for which $(S_1+P_1)x_0\in M_{s1}$, $S_2x_0\in M_{s_2}$ and $P_2x_0\in M_2$.
 \end{corollary}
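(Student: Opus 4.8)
The plan is to follow exactly the route used for Corollary~\ref{Coroll-UstLagrReg} in the regular case, but now tracking the (up to) four components of the solution rather than two. Since the corollary is stated \emph{within} the hypotheses of Theorem~\ref{Th_GlobSing-LipschObl}, that theorem already supplies, for each admissible initial point $(t_0,x_0)\in L_{t_+}$ with $(S_1+P_1)x_0\in M_{s1}$, $S_2x_0\in M_{s_2}$, $P_2x_0\in M_2$ and each admissible choice of $\phi_{s_2}\in C([t_0,\infty),M_{s_2})$ with $\phi_{s_2}(t_0)=S_2x_0$, a unique global solution $x(t)$ on $J_{max}=[t_0,\infty)$. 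Hence only the \emph{boundedness} of $x(t)$ remains to be shown; globality is free.

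First I would confine the $(S_1+P_1)$-component. Because $M_{s1}$ is bounded, it is a proper subset of $X_{s_1}\dot+X_1$ whenever the latter is nontrivial, so condition~\ref{SingAttractor} is in force (if $X_{s_1}\dot+X_1=\{0\}$ the component vanishes identically and is trivially bounded). By that condition the component $(S_1+P_1)x(t)=x_{s_1}(t)+x_{p_1}(t)$ never leaves $M_{s1}$, so it stays in $M_{s1}$ for all $t\in[t_0,\infty)$. Boundedness of $M_{s1}$ in the norm $\|(x_{s_1},x_{p_1})\|=\|x_{s_1}\|+\|x_{p_1}\|$ then gives $\sup_{t}\big(\|x_{s_1}(t)\|+\|x_{p_1}(t)\|\big)<\infty$, whence both $\|x_{s_1}(t)\|$ and $\|x_{p_1}(t)\|$ are bounded on $[t_0,\infty)$.

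Next I would bound the two remaining components. The component $x_{s_2}(t)=S_2x(t)=\phi_{s_2}(t)$ is valued in $M_{s_2}$ by the choice of $\phi_{s_2}$, and $M_{s_2}$ is bounded, so $\sup_t\|x_{s_2}(t)\|<\infty$ with no further argument (when $\rank(\lambda A+B)=n$ this component is absent and is simply omitted). The component $x_{p_2}(t)=P_2x(t)$ is produced, exactly as in the proof of the theorem, as the value of the implicit solution of the algebraic constraint furnished by condition~\ref{SoglSing}, and it lies in $M_2$ for all $t$; boundedness of $M_2$ gives $\sup_t\|x_{p_2}(t)\|<\infty$. Summing in the ``maximal'' norm $\|x\|=\|x_{s_1}\|+\|x_{s_2}\|+\|x_{p_1}\|+\|x_{p_2}\|$ used throughout the paper yields $\sup_{t\in[t_0,\infty)}\|x(t)\|<\infty$. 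Thus $x(t)$ is global and bounded, i.e.\ Lagrange stable, for every admissible $\phi_{s_2}$, which is precisely the assertion.

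I do not anticipate a genuine obstacle: this is a direct corollary and each component is simply read off the theorem's construction. The only point needing care relative to the regular case is the bookkeeping across four components, together with the observation that the free component $\phi_{s_2}$ requires no separate boundedness estimate because it is, by construction, a continuous function valued in the bounded set $M_{s_2}$.
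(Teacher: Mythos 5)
Your proposal is correct and follows essentially the same route as the paper: the paper proves this corollary by mirroring the proof of Corollary~\ref{Coroll-UstLagrReg}, i.e., globality comes from Theorem~\ref{Th_GlobSing-LipschObl}, and boundedness follows because each component of the solution ($x_{s_1}(t)+x_{p_1}(t)$ trapped in $M_{s1}$ via condition~\ref{SingAttractor}, $S_2x(t)=\phi_{s_2}(t)$ valued in $M_{s_2}$, and $x_{p_2}(t)$ valued in $M_2$ via the implicit function from condition~\ref{SoglSing}) lies in a bounded set, so the sum is bounded in the norm $\|x\|=\|x_{s_1}\|+\|x_{s_2}\|+\|x_{p_1}\|+\|x_{p_2}\|$. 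Your added remarks (that boundedness of $M_{s1}$ forces $M_{s1}\neq X_{s_1}\dot+X_1$ so condition~\ref{SingAttractor} indeed applies, and that the free component $\phi_{s_2}$ needs no separate estimate) are accurate refinements of the same argument.
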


 \begin{theorem}\label{Th_GlobSing-Obl}
Theorem~\ref{Th_GlobSing-LipschObl} remains valid if condition \ref{InvSing-Lipsch} is replaced by
\begin{enumerate}[label={\upshape\arabic*.}, ref={\upshape\arabic*}, itemsep=3pt,parsep=0pt,topsep=4pt,leftmargin=0.6cm]
\addtocounter{enumi}{1}
\item\label{InvSing} A function $f(t,x)$ has the continuous partial derivative with respect to $x$ on $\mT\times D$.\;  For~any fixed $t_*\in \mT$, ${x_*=x_{s_1}^*+x_{s_2}^*+x_{p_1}^*+x_{p_2}^*}$ such that $x_{s_1}^*+x_{p_1}^*\in M_{s1}$, $x_{s_2}^*\in M_{s_2}$, $x_{p_2}^*\in M_2$ and ${(t_*,x_*)\in L_{t_+}}$, the operator
     \begin{equation}\label{funcPhiInvSing}
    \Phi_{t_*,x_*}:=\left[\partial_x (Q_2f)(t_*,x_*)- B\right] P_2\colon X_2\to Y_2
     \end{equation}
    has the inverse $\Phi_{t_*,x_*}^{-1}\in \mathrm{L}(Y_2,X_2)$.
\end{enumerate}
 \end{theorem}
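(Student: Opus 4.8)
The plan is to show that condition~\ref{InvSing} of Theorem~\ref{Th_GlobSing-Obl} implies condition~\ref{InvSing-Lipsch} of Theorem~\ref{Th_GlobSing-LipschObl}; once this implication is established, all hypotheses of Theorem~\ref{Th_GlobSing-LipschObl} are fulfilled and its conclusion transfers without change. This repeats, almost verbatim, the argument already used in the proof of Theorem~\ref{Th_GlobReg-Obl} for the regular case, the only difference being that the auxiliary mapping $\widetilde{F}$ is replaced by the mapping $\widetilde{\Psi}$ from \eqref{tildePsiSing}, which additionally carries the parametric dependence on $x_{s_1}$ and $x_{s_2}$.

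First I would observe that the continuous differentiability of $f(t,x)$ with respect to $x$ on $\mT\times D$ immediately yields local Lipschitz continuity in $x$, so the first assertion of condition~\ref{InvSing-Lipsch} holds. Next, fix an arbitrary consistent point $t_*\in\mT$, $x_*=x_{s_1}^*+x_{s_2}^*+x_{p_1}^*+x_{p_2}^*$ with $x_{s_1}^*+x_{p_1}^*\in M_{s1}$, $x_{s_2}^*\in M_{s_2}$, $x_{p_2}^*\in M_2$, $(t_*,x_*)\in L_{t_+}$, and take the operator $\Phi_{t_*,x_*}$ from \eqref{funcPhiInvSing} to be the operator required in condition~\ref{InvSing-Lipsch}. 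The key identification is that, since $\widetilde{\Psi}$ depends on its last argument only through $Q_2f(\cdots+x_{p_2})$ and the linear term $-B\big|_{X_2}x_{p_2}$, its partial derivative in $x_{p_2}$ equals $\big[\partial_x(Q_2f)-B\big]\big|_{X_2}=\big[\partial_x(Q_2f)-B\big]P_2=\Phi_{t_*,x_*}$ evaluated at the chosen point (using $P_2\big|_{X_2}=I_{X_2}$); by condition~\ref{InvSing} this operator is invertible with $\Phi_{t_*,x_*}^{-1}\in\mathrm{L}(Y_2,X_2)$.

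Then I would estimate, by the mean value inequality applied along the segment from $x_{p_2}^2$ to $x_{p_2}^1$,
\begin{multline*}
\big\|\widetilde{\Psi}(t,x_{s_1},x_{s_2},x_{p_1},x_{p_2}^1)-\widetilde{\Psi}(t,x_{s_1},x_{s_2},x_{p_1},x_{p_2}^2)-\Phi_{t_*,x_*}[x_{p_2}^1-x_{p_2}^2]\big\|\le\\
\le\int\limits_0^1\big\|\partial_{x_{p_2}}\widetilde{\Psi}\big(t,x_{s_1},x_{s_2},x_{p_1},x_{p_2}^2+\theta(x_{p_2}^1-x_{p_2}^2)\big)-\partial_{x_{p_2}}\widetilde{\Psi}(t_*,x_{s_1}^*,x_{s_2}^*,x_{p_1}^*,x_{p_2}^*)\big\|\,d\theta\;\|x_{p_2}^1-x_{p_2}^2\|,
\end{multline*}
valid for all $(t,x_{s_1},x_{s_2},x_{p_1})$ in a closed neighborhood $\overline{N_\delta(t_*,x_{s_1}^*,x_{s_2}^*,x_{p_1}^*)}$ and all $x_{p_2}^1,x_{p_2}^2\in\overline{U_\varepsilon(x_{p_2}^*)}$. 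Setting $q(\delta,\varepsilon)$ equal to the supremum of the integrand over these closed neighborhoods, the continuity of $\partial_{x_{p_2}}\widetilde{\Psi}$ at $(t_*,x_{s_1}^*,x_{s_2}^*,x_{p_1}^*,x_{p_2}^*)$ gives $q(\delta,\varepsilon)\to 0$ as $\delta,\varepsilon\to 0$, whence $\lim_{\delta,\varepsilon\to 0}q(\delta,\varepsilon)=0<\|\Phi_{t_*,x_*}^{-1}\|^{-1}$. This is precisely the inequality \eqref{ContractiveMapPsi} demanded in condition~\ref{InvSing-Lipsch}, so that condition holds and Theorem~\ref{Th_GlobSing-LipschObl} applies.

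I expect the only delicate point to be the bookkeeping for the partial derivative in the $X_2$-direction: one must check that differentiating $Q_2f(t,x_{s_1}+x_{s_2}+x_{p_1}+x_{p_2})$ with respect to $x_{p_2}\in X_2$ yields exactly $\partial_x(Q_2f)$ restricted to $X_2$, which matches $\big[\partial_x(Q_2f)-B\big]P_2$ on $X_2$ because $P_2$ acts as the identity there. The possibly degenerate factor $N_{\delta_3}(x_{s_2}^*)$ in the neighborhood $N_\delta$ causes no difficulty, since $x_{s_1},x_{s_2},x_{p_1}$ merely play the role of frozen parameters in this estimate; everything else is a routine transcription of the regular-case computation from the proof of Theorem~\ref{Th_GlobReg-Obl}.
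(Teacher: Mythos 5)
Your proposal is correct and follows essentially the same route as the paper: the paper proves Theorem~\ref{Th_GlobSing-Obl} by declaring it "carried out in a similar way as" the proof of Theorem~\ref{Th_GlobReg-Obl}, i.e., exactly the reduction you perform — showing condition~\ref{InvSing} implies condition~\ref{InvSing-Lipsch} via the mean value inequality for $\widetilde{\Psi}$, with $\Phi_{t_*,x_*}=\partial_{x_{p_2}}\widetilde{\Psi}(t_*,x_{s_1}^*,x_{s_2}^*,x_{p_1}^*,x_{p_2}^*)$ and $q(\delta,\varepsilon)\to 0$ by continuity of the derivative, then invoking Theorem~\ref{Th_GlobSing-LipschObl}. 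Your transcription, including the treatment of $x_{s_1},x_{s_2},x_{p_1}$ as frozen parameters, matches the paper's intended argument.
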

   \begin{remark}[cf. {\cite[Remark 3.2]{Fil.Sing-GN}}]
The operator  $\Phi_{t_*,x_*}$ \eqref{funcPhiInvSing} (as well as \eqref{funcPhiInvReg}) is defined as an operator from $X_2$ into $Y_2$, however, in general, the operator defined by the formula from \eqref{funcPhiInvSing} is an operator from $\Rn$ into $\Rm$ with the rang $Y_2$, i.e., $\widehat{\Phi}_{t_*,x_*}:=\left[\partial_x (Q_2f)(t_*,x_*)- B\right] P_2\in \mathrm{L}(\Rn,\Rm)$ and $\widehat{\Phi}_{t_*,x_*}\Rn= Y_2$ \,($t_*$, $x_*$ are fixed). Since $\Phi_{t_*,x_*}=\widehat{\Phi}_{t_*,x_*}\big|_{X_2}$ and it is assumed that $\Phi_{t_*,x_*}$ is invertible, then $\widehat{\Phi}_{t_*,x_*}\Rn=\widehat{\Phi}_{t_*,x_*}X_2=Y_2$ \,(${X_s\dot+X_1=\Ker(\widehat{\Phi}_{t_*,x_*})}$).
The operator $\widehat{\Phi}_{t_*,x_*}$ has the semi-inverse $\widehat{\Phi}_{t_*,x_*}^{(-1)}$, i.e., the operator $\widehat{\Phi}_{t_*,x_*}^{(-1)}\in \mathrm{L}(\Rm,\Rn)$ such that $\widehat{\Phi}_{t_*,x_*}^{(-1)}\Rm= \widehat{\Phi}_{t_*,x_*}^{(-1)}Y_2=X_2$ and $\Phi_{t_*,x_*}^{-1}=\widehat{\Phi}_{t_*,x_*}^{(-1)}\big|_{Y_2}$, which is defined by the relations \;$\widehat{\Phi}_{t_*,x_*}^{(-1)} \widehat{\Phi}_{t_*,x_*}= P_2$, $\widehat{\Phi}_{t_*,x_*} \widehat{\Phi}_{t_*,x_*}^{(-1)}= Q_2$ and $\widehat{\Phi}_{t_*,x_*}^{(-1)}= P_2\widehat{\Phi}_{t_*,x_*}^{(-1)}$.
 \end{remark}

 \begin{proof}[The proof of Theorems \ref{Th_GlobSing-LipschObl}, \ref{Th_GlobSing-Obl} and Corollaries \ref{Coroll-GlobSing1}, \ref{Coroll-GlobSing2}, \ref{Coroll-UstLagrSing}]

The proofs of Theorems \ref{Th_GlobSing-LipschObl}, \ref{Th_GlobSing-Obl} and Corollaries \ref{Coroll-GlobSing1}, \ref{Coroll-GlobSing2}, \ref{Coroll-UstLagrSing} are carried out in a similar way as the proofs of Theorems \ref{Th_GlobReg-LipschObl}, \ref{Th_GlobReg-Obl} and Corollaries \ref{Coroll-GlobReg1}, \ref{Coroll-GlobReg2}, \ref{Coroll-UstLagrReg}, respectively.

In addition, the proofs of Theorems \ref{Th_GlobSing-LipschObl}, \ref{Th_GlobSing-Obl} for $D=\Rn$, $M_{s1}=X_{s_1}\dot+X_1$ and $M_2=X_2$ in their conditions are carried out in the same way as the proofs of the theorems \cite[Theorems 3.4, 3.2]{Fil.Sing-GN} (note that $D_{s_2}$ in the theorems \cite[Theorems 3.4, 3.2]{Fil.Sing-GN} denotes some set in $X_{s_2}$, not the subset $D_{s_2}=S_2D$ of $D$ as in Theorems \ref{Th_GlobSing-LipschObl}, \ref{Th_GlobSing-Obl}).
 \end{proof}

Note that the theorem \cite[Theorem 4.1]{Fil.Sing-GN}, as well as Corollary~\ref{Coroll-UstLagrSing}, gives conditions for the Lagrange stability of the singular DAE \eqref{DAE}, but in the case when $D=\Rn$ and $M_{s1}=X_{s_1}\dot+X_1$ (conditions \ref{SoglSing} and \ref{InvSing-Lipsch} of Theorem \ref{Th_GlobSing-LipschObl}, where $D=\Rn$, $M_{s1}=X_{s_1}\dot+X_1$ and $M_2=X_2$, are similar to those contained in \cite[Theorem 4.1]{Fil.Sing-GN}); thus, Corollary~\ref{Coroll-UstLagrSing} gives more general conditions.

 \section{The blow-up of solutions}\label{SectLagrUnst}

 \subsection{The blow-up of solutions (Lagrange instability) of regular semilinear DAEs}\label{SectLagrUnstReg}

 \begin{theorem}\label{Th_RegNeLagr}
Let $f\in C(\mT\times D,\Rn)$, where $D\subseteq \Rn$ is some open set and $\mT=[t_+,\infty)\subseteq [0,\infty)$, and let the operator pencil $\lambda A+B$ be a regular pencil of index not higher than 1. Assume that there exists an open (unbounded) set $M_1\subseteq D_1$ and a set $M_2\subseteq D_2$ such that condition \ref{SoglReg} of Theorem \ref{Th_GlobReg-LipschObl}, condition \ref{InvReg-Lipsch} of Theorem \ref{Th_GlobReg-LipschObl} (or condition \ref{InvReg} of Theorem \ref{Th_GlobReg-Obl})
and  condition \ref{RegAttractor} of Theorem \ref{Th_GlobReg-LipschObl} (or condition \ref{RegAttractCoroll} of Corollary \ref{Coroll-GlobReg1}) hold and the following condition holds:
\begin{enumerate}[label={\upshape\arabic*.}, ref={\upshape\arabic*}, itemsep=3pt,parsep=0pt,topsep=4pt,leftmargin=0.6cm]
\addtocounter{enumi}{3}
\item\label{NeUstLagrReg}
   There exists a function $V\in C^1\big(\mT\times M_1,\R\big)$ positive on $\mT\times M_1$ and a function ${\chi\in C(\mT\times (0,\infty),\R)}$ such that:
   \begin{enumerate}[label={\upshape(\ref{NeUstLagrReg}.\alph*)}, ref={(\ref{NeUstLagrReg}.\alph*)},itemsep=2pt,parsep=0pt,topsep=2pt]
  \item for each $t\in \mT$, $x_{p_1}\in M_1$, $x_{p_2}\in M_2$  such that $(t,x_{p_1}+x_{p_2})\in L_{t_+}$, the derivative \eqref{dVDAEreg} of the function $V$ along the trajectories of the equation \eqref{DAEsysExtReg1} satisfies the inequality
   \begin{equation*}
    \dot{V}_\eqref{DAEsysExtReg1}(t,x_{p_1})\ge \chi\big(t,V(t,x_{p_1})\big);
   \end{equation*}

  \item\label{NeustLagrIneq}
  the differential inequality
   \begin{equation}\label{L2v}
     \dot{v}\ge \chi(t,v)\qquad (t\in \mT)
   \end{equation}
    does not have global positive solutions.
  \end{enumerate}
\end{enumerate}
Then for each initial point $(t_0,x_0)\in L_{t_+}$ for which $P_ix_0\in M_i$, $i=1,2$, the IVP \eqref{DAE}, \eqref{ini} has a unique solution $x(t)$  and this solution has a finite escape time (i.e., is blow-up in finite time).
 \end{theorem}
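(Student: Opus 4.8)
The plan is to reuse verbatim the reduction and local-existence machinery from the proof of Theorem~\ref{Th_GlobReg-LipschObl}, and then to exploit the reversed differential inequality $\dot v\ge\chi(t,v)$ to force a finite escape time. First I would reduce the DAE \eqref{DAE} to the explicit ODE \eqref{DAEsysExtReg1eta} exactly as before: conditions \ref{SoglReg} and \ref{InvReg-Lipsch} (or \ref{InvReg}) let me solve the algebraic constraint \eqref{DAEsysExtReg2} for $x_{p_2}=\eta(t,x_{p_1})$ through Lemma~\ref{Lemma-ImplicitFuncReg}, and condition \ref{SoglReg} guarantees that $\eta$ is the unique function of this kind on all of $\mT\times M_1$. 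Substituting $x_{p_2}=\eta(t,x_{p_1})$ into \eqref{DAEsysExtReg1} yields \eqref{DAEsysExtReg1eta}, whose IVP with data \eqref{RegDEeta_ini} has a unique solution $x_{p_1}(t)$ on a maximal interval $J_{max}=[t_0,\beta)$, $\beta\le\infty$; the corresponding unique solution of the IVP \eqref{DAE}, \eqref{ini} is $x(t)=x_{p_1}(t)+\eta(t,x_{p_1}(t))$.

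Next, condition \ref{RegAttractor} (or \ref{RegAttractCoroll}), which is trivially satisfied when $M_1=X_1$, guarantees that $x_{p_1}(t)\in M_1$ for every $t\in J_{max}$. Combining this with the extension theorems precisely as in the proof of Theorem~\ref{Th_GlobReg-LipschObl}, I obtain the dichotomy: either $J_{max}=[t_0,\infty)$, or $\beta<\infty$ and $\lim\limits_{t\to\beta-0}\|x_{p_1}(t)\|=+\infty$ (the possibility that the trajectory approaches $\partial M_1$ is excluded because the solution never leaves $M_1$).

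The decisive step is to introduce $v(t):=V(t,x_{p_1}(t))$. Since $x_{p_1}(t)\in M_1$, $\eta(t,x_{p_1}(t))\in M_2$ and $(t,x_{p_1}(t)+\eta(t,x_{p_1}(t)))\in L_{t_+}$ along the whole trajectory, the first part of condition \ref{NeUstLagrReg} applies, and using the identification $\dot V_{\eqref{DAEsysExtReg1eta}}=\dot V_{\eqref{DAEsysExtReg1}}$ along the solution I get $\dot v(t)=\dot V_{\eqref{DAEsysExtReg1}}(t,x_{p_1}(t))\ge\chi(t,v(t))$; moreover $v(t)>0$ because $V$ is positive on $\mT\times M_1$. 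Thus $v$ is a positive solution of \eqref{L2v}. If the solution were global, i.e.\ $J_{max}=[t_0,\infty)$, then $v$ would be a \emph{global} positive solution of \eqref{L2v}, contradicting condition \ref{NeustLagrIneq}. Hence $\beta<\infty$ and $\|x_{p_1}(t)\|\to+\infty$ as $t\to\beta-0$. Since $\|x(t)\|=\|x_{p_1}(t)\|+\|\eta(t,x_{p_1}(t))\|\ge\|x_{p_1}(t)\|$, it follows that $\lim\limits_{t\to\beta-0}\|x(t)\|=+\infty$, so $x(t)$ has a finite escape time.

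I expect the only real obstacle to be a bookkeeping point rather than a conceptual one: I must verify that $v(t)$ satisfies the differential inequality with the \emph{same} $\chi$ throughout $J_{max}$, which hinges entirely on the trajectory never leaving $M_1$ (this is exactly where the attractor condition \ref{RegAttractor}/\ref{RegAttractCoroll} is indispensable, since outside $M_1$ the first part of condition \ref{NeUstLagrReg} need not hold) and on the identification of $\dot V_{\eqref{DAEsysExtReg1eta}}$ with $\dot V_{\eqref{DAEsysExtReg1}}$ along the solution. Everything else is a direct transcription of the global-solvability argument, with the sole change that the blow-up of $v$ is now read off from the non-existence of global positive solutions of \eqref{L2v} rather than from the non-existence of finite-escape-time solutions of \eqref{L1v}.
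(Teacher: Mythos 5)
Your proposal is correct and takes essentially the same route as the paper: the same reduction to the ODE \eqref{DAEsysExtReg1eta} via Lemma~\ref{Lemma-ImplicitFuncReg}, the same dichotomy for $J_{max}$ obtained from condition \ref{RegAttractor} (or \ref{RegAttractCoroll}) and the extension theorems, and the same use of condition \ref{NeUstLagrReg} to exclude $J_{max}=[t_0,\infty)$. The only difference is cosmetic: where you argue directly by contradiction (a global solution would make $v(t)=V(t,x_{p_1}(t))$ a global positive solution of \eqref{L2v}), the paper instead cites the comparison theorem of \cite[Chapter~IV, Theorem~XIV]{LaSal-Lef}, which is exactly what your inline argument verifies.
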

  \begin{corollary}\label{Coroll-RegNeLagr}
Theorem~\ref{Th_RegNeLagr} remains valid if condition \ref{NeUstLagrReg} is replaced by
\begin{enumerate}[label={\upshape\arabic*.}, ref={\upshape\arabic*}, itemsep=3pt,parsep=0pt,topsep=4pt,leftmargin=0.6cm]
 \addtocounter{enumi}{3}
\item\label{NeUstLagrRegCoroll}
  There exists a function $V\in C^1\big(\mT\times M_1,\R\big)$ positive on $\mT\times M_1$ and functions ${k\in C(\mT,\R)}$, ${U\in C(0,\infty)}$ such that:\; for each ${t\in \mT}$, ${x_{p_1}\in M_1}$, ${x_{p_2}\in M_2}$  such that $(t,x_{p_1}+x_{p_2})\in L_{t_+}$ the inequality \;${\dot{V}_\eqref{DAEsysExtReg1}(t,x_{p_1})\ge k(t)\, U\big(V(t,x_{p_1})\big)}$\, holds;\; ${\int\limits_{k_0}^{\infty}k(t)dt=\infty}$ and ${\int\limits_{{\textstyle v}_0}^{\infty}\dfrac{dv}{U(v)}<\infty}$  (\,${k_0,v_0>0}$ are constants).
\end{enumerate}
  \end{corollary}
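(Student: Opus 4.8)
The plan is to deduce Corollary~\ref{Coroll-RegNeLagr} from Theorem~\ref{Th_RegNeLagr} by specializing the comparison function to $\chi(t,v):=k(t)\,U(v)$ and checking that, with this choice, condition~\ref{NeUstLagrReg} is satisfied; then the conclusion of Theorem~\ref{Th_RegNeLagr} applies verbatim. This mirrors the reduction of Corollary~\ref{Coroll-GlobReg2} to Theorem~\ref{Th_GlobReg-LipschObl}, but with the inequalities reversed.

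First I would set $\chi(t,v):=k(t)\,U(v)$, which belongs to $C(\mT\times(0,\infty),\R)$ since $k\in C(\mT,\R)$ and $U\in C(0,\infty)$. With this $\chi$, the first part of condition~\ref{NeUstLagrReg} is exactly the inequality $\dot{V}_\eqref{DAEsysExtReg1}(t,x_{p_1})\ge k(t)\,U\big(V(t,x_{p_1})\big)$ assumed in condition~\ref{NeUstLagrRegCoroll}, so it holds on $\mT\times M_1$ by hypothesis.

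The core step is to verify sub-condition~\ref{NeustLagrIneq}, namely that the differential inequality~\eqref{L2v}, i.e.\ $\dot{v}\ge k(t)\,U(v)$, admits no global positive solution; this is the classical Osgood-type criterion (see, e.g., \cite{LaSal-Lef}). I would argue by contradiction: suppose $v\in C^1([a,\infty),(0,\infty))$ satisfies $\dot{v}\ge k(t)\,U(v)$ on some $[a,\infty)\subseteq\mT$. Setting $G(v):=\int_{v_0}^{v}\frac{du}{U(u)}$, one has $\frac{d}{dt}G(v(t))=\frac{\dot{v}(t)}{U(v(t))}\ge k(t)$, and integrating from $a$ to $T$ yields $G(v(T))-G(v(a))\ge\int_a^{T}k(t)\,dt$. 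As $T\to\infty$ the right-hand side tends to $+\infty$ because $\int_{k_0}^{\infty}k(t)\,dt=\infty$, whereas, since $G$ is increasing and bounded above, the left-hand side stays below $G(\infty)-G(v(a))=\int_{v(a)}^{\infty}\frac{du}{U(u)}<\infty$, which is finite thanks to $\int_{v_0}^{\infty}\frac{dv}{U(v)}<\infty$. This contradiction shows that no global positive solution of~\eqref{L2v} exists, establishing sub-condition~\ref{NeustLagrIneq}.

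Having verified both parts of condition~\ref{NeUstLagrReg} for $\chi(t,v)=k(t)\,U(v)$, all hypotheses of Theorem~\ref{Th_RegNeLagr} are in force, so for each consistent initial point $(t_0,x_0)\in L_{t_+}$ with $P_ix_0\in M_i$, $i=1,2$, the IVP~\eqref{DAE},~\eqref{ini} has a unique solution with finite escape time. The main obstacle is making the comparison argument fully rigorous: one needs $U>0$ on the range traversed by $v$ so that $G$ is well defined, strictly increasing, and bounded above by the convergent integral $\int_{v_0}^{\infty}\frac{dv}{U(v)}$; and one must read $\int_{k_0}^{\infty}k(t)\,dt=\infty$ as divergence of the improper integral, so that $\int_a^{T}k\,dt\to+\infty$ (the finite constant $\int_{k_0}^{a}k\,dt$ being harmless). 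Once these points are secured, differentiating $G(v(t))$ (equivalently, the substitution $u=v(t)$) is legitimate and the contradiction follows.
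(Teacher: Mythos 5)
Your proposal is correct and follows essentially the same route as the paper: set $\chi(t,v):=k(t)\,U(v)$ and observe that condition \ref{NeUstLagrReg} of Theorem~\ref{Th_RegNeLagr} then holds, so that theorem applies verbatim. The only difference is that the paper simply cites \cite{LaSal-Lef} for the fact that $\dot{v}\ge k(t)\,U(v)$ has no global positive solutions under the two integral conditions, whereas you supply the standard Osgood-type comparison argument yourself (which is sound, noting that the paper's convention $U\in C(0,\infty)$ already means $U\colon(0,\infty)\to(0,\infty)$, so positivity of $U$ is automatic).
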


 \begin{proof}[The proof of Theorem~\ref{Th_RegNeLagr}]
Just as in the proof of Theorem~\ref{Th_GlobReg-LipschObl}, or Theorem~\ref{Th_GlobReg-Obl}, or Corollary \ref{Coroll-GlobReg1} (depending on which conditions hold), it is proved that there exists a unique solution $x_{p_1}(t)$ of the IVP \eqref{DAEsysExtReg1eta}, \eqref{RegDEeta_ini} on the maximal interval of existence $J_{max}$ and either $J_{max}=[t_0,\infty)$, or $J_{max}=[t_0,\beta)$ where $\beta<\infty$ and $\lim\limits_{t\to\beta-0}\|x_{p_1}(t)\|=\infty$.
In addition, it is proved that the IVP \eqref{DAE}, \eqref{ini} has the unique solution $x(t)=x_{p_1}(t)+\eta(t,x_{p_1}(t))$ on the same maximal interval of existence $J_{max}$. This holds for each initial point $(t_0,x_0)\in L_{t_+}$ for which $P_ix_0\in M_i$, $i=1,2$.
Further, from condition~\ref{NeUstLagrReg} it follows that there exists a function $V\in C^1\big(\mT\times M_1,\R\big)$ positive on $\mT\times M_1$ and a function $\chi\in C(\mT\times (0,\infty),\R)$ such that $\dot{V}_{\eqref{DAEsysExtReg1eta}}(t,x_{p_1})\ge \chi\big(t,V(t,x_{p_1})\big)$ for each $t\in \mT$, $x_{p_1}\in M_1$ and the inequality \eqref{L2v} does not have global positive solutions.
Then, using \cite[Chapter~IV, Theorem~XIV]{LaSal-Lef},  we obtain that the solution $x_{p_1}(t)$ has a finite escape time and hence $J_{max}=[t_0,\beta)$, $\beta<\infty$ ($\lim\limits_{t\to \beta-0}\|x_{p_1}(t)\|= \infty$). Consequently, the solution $x(t)$ of the IVP  \eqref{DAE}, \eqref{ini} has a finite escape time.
 \end{proof}

  \begin{proof}[The proof of Corollary \ref{Coroll-RegNeLagr}]
Let $\chi(t,v):=k(t)\,U(v)$ where ${k\in C(\mT,\R)}$, ${U\in C(0,\infty)}$ satisfies the relations ${\int\limits_{k_0}^{\infty}k(t)dt=\infty}$, ${\int\limits_{{\textstyle v}_0}^{\infty}\dfrac{dv}{U(v)}<\infty}$  (\,${k_0,v_0>0}$), then the differential inequality \eqref{L2v} does not have global positive solutions (see, e.g., \cite{LaSal-Lef}). Hence,  condition \ref{NeUstLagrReg} of Theorem~\ref{Th_RegNeLagr}, where the function $\chi$ has the form $\chi(t,v)=k(t)\,U(v)$, holds, and all conditions of Theorem~\ref{Th_RegNeLagr} are fulfilled.
 \end{proof}

 \subsection{The blow-up of solutions (Lagrange instability) of singular semilinear DAEs}\label{SectLagrUnstSing}

 \begin{theorem}\label{Th_SingNeLagr}
Let $f\in C(\mT\times D,\Rm)$, where $D\subseteq \Rn$ is some open set and $\mT=[t_+,\infty)\subseteq [0,\infty)$, and let the operator pencil $\lambda A+B$ be a singular pencil such that its regular block $\lambda A_r+B_r$, where $A_r$, $B_r$ are defined in \eqref{srAB}, is a regular pencil of index not higher than 1. Assume that there exists an open (unbounded) set $M_{s1}\subseteq D_{s_1}\dot+D_1$ and sets $M_{s_2}\subseteq D_{s_2}$, $M_2\subseteq D_2$ such that condition \ref{SoglSing} of Theorem \ref{Th_GlobSing-LipschObl}, condition \ref{InvSing-Lipsch} of Theorem \ref{Th_GlobSing-LipschObl} (or condition \ref{InvSing} of Theorem \ref{Th_GlobSing-Obl})  and  condition \ref{SingAttractor} of Theorem \ref{Th_GlobSing-LipschObl} (or condition \ref{SingAttractCoroll} of Corollary \ref{Coroll-GlobSing1}) hold and the following condition holds:
\begin{enumerate}[label={\upshape\arabic*.}, ref={\upshape\arabic*}, itemsep=3pt,parsep=0pt,topsep=4pt,leftmargin=0.6cm]
\addtocounter{enumi}{3}
\item\label{NeUstLagrSing}
   There exists a function ${V\in C^1\big(\mT\times \widehat{M}_{s1},\R\big)}$ positive on $\mT\times \widehat{M}_{s1}$, where $\widehat{M}_{s1}=\{(x_{s_1},x_{p_1})\in X_{s_1}\times X_1 \mid x_{s_1}+x_{p_1}\in M_{s1}\}$, and a function ${\chi\in C(\mT\times (0,\infty),\R)}$ such that:
   \begin{enumerate}[label={\upshape(\ref{NeUstLagrSing}.\alph*)}, ref={(\ref{NeUstLagrSing}.\alph*)},itemsep=2pt,parsep=0pt,topsep=2pt]
  \item for each $t\in \mT$, $(x_{s_1},x_{p_1})\in \widehat{M}_{s1}$, $x_{s_2}\in M_{s_2}$, $x_{p_2}\in M_2$  such that $(t,x_{s_1}+x_{s_2}+x_{p_1}+x_{p_2})\in L_{t_+}$, the derivative \eqref{dVDAEsing} of the function $V$ along the trajectories of the equations \eqref{DAEsysExtDE1}, \eqref{DAEsysExtDE2} satisfies the inequality
   \begin{equation}\label{NeLagrDAESing}
    \dot{V}_{\eqref{DAEsysExtDE1},\eqref{DAEsysExtDE2}}(t,x_{s_1},x_{p_1})\ge \chi\big(t,V(t,x_{s_1},x_{p_1})\big);
   \end{equation}

  \item the differential inequality \eqref{L2v} does not have global positive solutions.
  \end{enumerate}
\end{enumerate}
Then for each initial point $(t_0,x_0)\in L_{t_+}$, for which $(S_1+P_1)x_0\in M_{s1}$, $S_2x_0\in M_{s_2}$ and $P_2x_0\in M_2$, the IVP \eqref{DAE}, \eqref{ini} has a unique solution $x(t)$ for which the choice of the function $\phi_{s_2}\in C([t_0,\infty),M_{s_2})$ with the initial value  ${\phi_{s_2}(t_0)=S_2 x_0}$ uniquely defines the component $S_2x(t)=\phi_{s_2}(t)$ when ${\rank(\lambda A+B)<n}$ \,\textup{(}when ${\rank(\lambda A+B)=n}$, the component $S_2 x$ is absent\textup{)}, and this solution has a finite escape time (i.e., is blow-up in finite time).
 \end{theorem}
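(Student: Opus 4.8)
The plan is to follow the same two-stage strategy used for Theorem~\ref{Th_RegNeLagr}, but applied to the reduced system associated with the singular DAE \eqref{DAE}, namely the differential equations \eqref{DAEsysExtDE1}, \eqref{DAEsysExtDE2} together with the algebraic constraints \eqref{DAEsysExtAE1}, \eqref{DAEsysExtAE2}. First I would establish existence, uniqueness, and a dichotomy for the maximal interval of existence exactly as in the proofs of Theorems~\ref{Th_GlobSing-LipschObl} and \ref{Th_GlobSing-Obl} (which themselves rest on the regular arguments of Theorems~\ref{Th_GlobReg-LipschObl}, \ref{Th_GlobReg-Obl}). Conditions \ref{SoglSing} and \ref{InvSing-Lipsch} (or \ref{InvSing}) let me resolve the algebraic equation \eqref{DAEsysExtAE1} via an implicit-function argument (the singular analogue of Lemma~\ref{Lemma-ImplicitFuncReg}), obtaining a unique function $x_{p_2}=\eta(t,x_{s_1},x_{s_2},x_{p_1})$ that is continuous and locally Lipschitz jointly in its arguments on $\mT\times M_{s1}\times M_{s_2}$ (here $(x_{s_1},x_{p_1})$ ranges over $\widehat{M}_{s1}$). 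Choosing any free component $x_{s_2}(t)=\phi_{s_2}(t)\in C([t_0,\infty),M_{s_2})$ with $\phi_{s_2}(t_0)=S_2x_0$ and substituting $\eta$ into \eqref{DAEsysExtDE1}, \eqref{DAEsysExtDE2}, I obtain a reduced nonautonomous ODE system for $(x_{s_1},x_{p_1})$ whose right-hand side is continuous and locally Lipschitz in $(x_{s_1},x_{p_1})$. By condition~\ref{SingAttractor} (or \ref{SingAttractCoroll}, whose sufficiency is the content of the singular analogue of Lemma~\ref{Lem-UltimSet}), the combination $x_{s_1}(t)+x_{p_1}(t)$ can never leave $M_{s1}$, so $(x_{s_1}(t),x_{p_1}(t))\in\widehat{M}_{s1}$ on the whole maximal interval. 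The extension theorems then give the dichotomy: either $J_{max}=[t_0,\infty)$, or $J_{max}=[t_0,\beta)$ with $\beta<\infty$ and $\lim_{t\to\beta-0}\|x_{s_1}(t)+x_{p_1}(t)\|=\infty$. Reassembling $x(t)=x_{s_1}(t)+\phi_{s_2}(t)+x_{p_1}(t)+\eta(t,x_{s_1}(t),\phi_{s_2}(t),x_{p_1}(t))$ yields a unique solution of the IVP \eqref{DAE}, \eqref{ini} on the same $J_{max}$, with $S_2x(t)=\phi_{s_2}(t)$ determined by the choice of $\phi_{s_2}$.

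Second, I would exclude the global alternative by a comparison argument. Put $v(t):=V(t,x_{s_1}(t),x_{p_1}(t))$; since $(x_{s_1}(t),x_{p_1}(t))\in\widehat{M}_{s1}$ and $V$ is positive on $\mT\times\widehat{M}_{s1}$, the scalar function $v$ is positive, and by the inequality \eqref{NeLagrDAESing} together with the expression \eqref{dVDAEsing} for the derivative along trajectories it satisfies $\dot v\ge\chi(t,v)$ on $J_{max}$. Were $J_{max}=[t_0,\infty)$, then $v$ would be a global positive solution of \eqref{L2v}, contradicting condition~\ref{NeUstLagrSing}. Hence $J_{max}=[t_0,\beta)$ with $\beta<\infty$; invoking \cite[Chapter~IV, Theorem~XIV]{LaSal-Lef} directly gives $\lim_{t\to\beta-0}\|x_{s_1}(t)+x_{p_1}(t)\|=+\infty$. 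Finally, since the chosen norm obeys $\|x\|=\|x_{s_1}\|+\|x_{s_2}\|+\|x_{p_1}\|+\|x_{p_2}\|$ with nonnegative summands, the blow-up of $x_{s_1}+x_{p_1}$ forces $\lim_{t\to\beta-0}\|x(t)\|=+\infty$, so the solution $x(t)$ has a finite escape time.

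The main obstacle, and the step requiring the most care, is keeping the roles of the auxiliary sets straight and checking that the reduced vector field inherits local Lipschitz regularity \emph{uniformly} in the free function $\phi_{s_2}$: the implicit function $\eta$ must be jointly continuous and locally Lipschitz in $(t,x_{s_1},x_{s_2},x_{p_1})$ so that, after the substitution $x_{s_2}=\phi_{s_2}(t)$, the nonautonomous field on $\widehat{M}_{s1}$ remains locally Lipschitz in $(x_{s_1},x_{p_1})$ and the extension theorems apply. One must also verify that condition~\ref{SingAttractor} traps precisely the combination $x_{s_1}+x_{p_1}$ rather than $x_{s_2}$, which is exactly what permits $V$, a function of $(x_{s_1},x_{p_1})$ alone, to serve as the comparison function notwithstanding the undetermined component $x_{s_2}$.
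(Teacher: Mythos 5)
Your proposal is correct and follows essentially the same route as the paper, which proves Theorem~\ref{Th_SingNeLagr} by repeating the argument of Theorem~\ref{Th_RegNeLagr} in the singular setting: the global-solvability machinery (implicit resolution of the constraints, choice of the free component $\phi_{s_2}$, trapping in $M_{s1}$, and the extension-theorem dichotomy for $J_{max}$) followed by the comparison argument via \cite[Chapter~IV, Theorem~XIV]{LaSal-Lef} applied to $v(t)=V(t,x_{s_1}(t),x_{p_1}(t))$ and the inequality \eqref{L2v}. Your closing observation that blow-up of $x_{s_1}+x_{p_1}$ forces blow-up of $x$ in the paper's norm matches the paper's concluding step as well.
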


 \begin{corollary}\label{Coroll-SingNeLagr}
Theorem~\ref{Th_SingNeLagr} remains valid if condition \ref{NeUstLagrSing} is replaced by
\begin{enumerate}[label={\upshape\arabic*.}, ref={\upshape\arabic*}, itemsep=3pt,parsep=0pt,topsep=4pt,leftmargin=0.6cm]
 \addtocounter{enumi}{3}
\item\label{NeUstLagrSingCoroll}
  There exists a function ${V\in C^1\big(\mT\times \widehat{M}_{s1},\R\big)}$ positive on $\mT\times \widehat{M}_{s1}$, where $\widehat{M}_{s1}=\{(x_{s_1},x_{p_1})\in X_{s_1}\times X_1 \mid x_{s_1}+x_{p_1}\in M_{s1}\}$,  and functions ${k\in C(\mT,\R)}$, ${U\in C(0,\infty)}$ such that:\; for each $t\in \mT$, $(x_{s_1},x_{p_1})\in \widehat{M}_{s1}$, $x_{s_2}\in M_{s_2}$, $x_{p_2}\in M_2$ such that $(t,x_{s_1}+x_{s_2}+x_{p_1}+x_{p_2})\in L_{t_+}$  the inequality \;$\dot{V}_{\eqref{DAEsysExtDE1},\eqref{DAEsysExtDE2}}(t,x_{s_1},x_{p_1})\ge k(t)\, U\big(V(t,x_{s_1},x_{p_1})\big)$\, holds;\; ${\int\limits_{k_0}^{\infty}k(t)dt=\infty}$ and ${\int\limits_{{\textstyle v}_0}^{\infty}\dfrac{dv}{U(v)}<\infty}$  (\,${k_0,v_0>0}$ are constants).
\end{enumerate}
 \end{corollary}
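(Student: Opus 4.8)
The plan is to recognize that condition \ref{NeUstLagrSingCoroll} is nothing but a concrete instance of condition \ref{NeUstLagrSing}, so that the conclusion follows directly from Theorem~\ref{Th_SingNeLagr} with no need to revisit the construction of the solution or the blow-up mechanism. This is the exact singular analogue of the argument used for Corollary~\ref{Coroll-RegNeLagr} in the regular case, so I would model the proof on that one.

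First I would set $\chi(t,v):=k(t)\,U(v)$, using the functions $k\in C(\mT,\R)$ and $U\in C(0,\infty)$ supplied by \ref{NeUstLagrSingCoroll}. With this choice, part~(a) of condition \ref{NeUstLagrSing} — that is, the inequality \eqref{NeLagrDAESing} for the derivative $\dot V_{\eqref{DAEsysExtDE1},\eqref{DAEsysExtDE2}}$ — holds verbatim, because the hypothesis of \ref{NeUstLagrSingCoroll} is precisely $\dot V_{\eqref{DAEsysExtDE1},\eqref{DAEsysExtDE2}}(t,x_{s_1},x_{p_1})\ge k(t)\,U\big(V(t,x_{s_1},x_{p_1})\big)=\chi\big(t,V(t,x_{s_1},x_{p_1})\big)$. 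Thus part~(a) requires no work beyond the substitution.

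Next I would verify part~(b): the scalar differential inequality \eqref{L2v}, which now reads $\dot v\ge k(t)\,U(v)$, admits no global positive solution. This is the only analytic ingredient, and it is a classical comparison (Osgood-type) fact. Informally, if $v\in C^1([a,\infty),(0,\infty))$ satisfied $\dot v\ge k(t)\,U(v)$, then separating variables and integrating would give $\int_{v(a)}^{v(t)}\frac{dv}{U(v)}\ge\int_a^t k(s)\,ds$; as $t\to\infty$ the right-hand side diverges because $\int_{k_0}^{\infty}k(t)\,dt=\infty$, while the left-hand side remains bounded, since the convergence $\int_{v_0}^{\infty}\frac{dv}{U(v)}<\infty$ caps the total possible increase — a contradiction. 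Rather than reproduce this estimate in full I would simply cite \cite{LaSal-Lef} for the standard statement.

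With parts~(a) and~(b) in hand, condition \ref{NeUstLagrSing} of Theorem~\ref{Th_SingNeLagr} holds for $\chi(t,v)=k(t)\,U(v)$, and since the remaining hypotheses (conditions \ref{SoglSing}, \ref{InvSing-Lipsch} or \ref{InvSing}, and \ref{SingAttractor} or \ref{SingAttractCoroll}) are carried over unchanged, all assumptions of Theorem~\ref{Th_SingNeLagr} are met. Hence the desired conclusion — that the IVP \eqref{DAE}, \eqref{ini} has a unique solution with the stated structure for its $S_2x$ component and with a finite escape time — follows immediately. I do not expect any genuine obstacle: the substantive blow-up conclusion is already delivered by Theorem~\ref{Th_SingNeLagr}, and this corollary is a routine specialization whose only content is the elementary divergence/convergence comparison of the two integrals.
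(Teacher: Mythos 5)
Your proposal is correct and follows essentially the same route as the paper: the paper proves this corollary by reduction to the proof of Corollary~\ref{Coroll-RegNeLagr}, which consists exactly of setting $\chi(t,v):=k(t)\,U(v)$, citing \cite{LaSal-Lef} for the fact that under the two integral conditions the inequality \eqref{L2v} has no global positive solutions, and concluding that condition \ref{NeUstLagrSing} (with this $\chi$) and hence all hypotheses of Theorem~\ref{Th_SingNeLagr} are satisfied. Your added Osgood-type separation-of-variables sketch is a harmless elaboration of the fact the paper simply cites.
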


 \begin{proof}[The proof of Theorem \ref{Th_SingNeLagr} and Corollary \ref{Coroll-SingNeLagr}]
The proofs of Theorem \ref{Th_SingNeLagr} and Corollary \ref{Coroll-SingNeLagr} are carried out in a similar way as the proofs of Theorem \ref{Th_RegNeLagr} and Corollary \ref{Coroll-RegNeLagr}, respectively.  In addition, the proof of Theorem \ref{Th_SingNeLagr} for $D=\Rn$, $M_{s1}=X_{s_1}\dot+X_1$ and $M_2=X_2$ in its conditions is carried out in a similar way as the proof of the theorem \cite[Theorem 5.1]{Fil.Sing-GN} (note that $D_{s_2}$ in \cite[Theorem 5.1]{Fil.Sing-GN} denotes some set in $X_{s_2}$, not the subset $D_{s_2}=S_2D$ of $D$ as in Theorem \ref{Th_SingNeLagr}).
 \end{proof}

 \section{The criterion of global solvability}\label{CritGlobSolv}

 \subsection{The criterion of global solvability of regular semilinear DAEs}\label{CritGlobSolvReg}

  \begin{theorem}\label{Th_CritGlobReg}
Let $f\in C(\mT\times D,\Rn)$, where $D\subseteq \Rn$ is some open set and $\mT=[t_+,\infty)\subseteq [0,\infty)$, and let $\lambda A+B$ be a regular pencil of index not higher than 1. Let there exist an open set $M_1\subseteq D_1$ and a set $M_2\subseteq D_2$ such that conditions \ref{SoglReg}, \ref{InvReg-Lipsch} and \ref{RegAttractor} of Theorem \ref{Th_GlobReg-LipschObl} hold.

Then the IVP \eqref{DAE}, \eqref{ini} has a unique solution $x(t)$ for each initial point ${(t_0,x_0)\in L_{t_+}}$ such that $P_ix_0\in M_i$, $i=1,2$, which is global (i.e., exists on $[t_0,\infty)$) if condition \ref{ExtensReg} of Theorem \ref{Th_GlobReg-LipschObl} is satisfied and has a finite escape time (i.e., exists on $[t_0,\beta)$ where $\beta<\infty$ and $\lim\limits_{t\to\beta-0}\|x_{p_1}(t)\|=\infty$) if condition \ref{NeUstLagrReg} of Theorem \ref{Th_RegNeLagr} is satisfied.
 \end{theorem}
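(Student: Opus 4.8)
The plan is to observe that conditions \ref{SoglReg}, \ref{InvReg-Lipsch} and \ref{RegAttractor} are precisely the hypotheses common to Theorem \ref{Th_GlobReg-LipschObl} and Theorem \ref{Th_RegNeLagr}, and that these three alone already yield a unique solution together with a clean dichotomy for its maximal interval of existence; the two possible fourth conditions then merely select which branch of the dichotomy occurs. Thus the statement is a synthesis of the two earlier theorems rather than a fresh argument.

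First I would reproduce the opening part of the proof of Theorem \ref{Th_GlobReg-LipschObl}. Using condition \ref{SoglReg} together with Lemma \ref{Lemma-ImplicitFuncReg}, one constructs the globally defined, continuous, locally Lipschitz implicit function $x_{p_2}=\eta(t,x_{p_1})$ on $\mT\times M_1$ solving the algebraic constraint \eqref{DAEsysReg2equiv} (global uniqueness of $\eta$ follows from condition \ref{SoglReg} exactly as in that proof). Substituting $\eta$ into \eqref{DAEsysExtReg1} reduces the IVP \eqref{DAE}, \eqref{ini} to the ODE initial value problem \eqref{DAEsysExtReg1eta}, \eqref{RegDEeta_ini} with right-hand side $\widetilde{\Pi}\in C(\mT\times M_1,X_1)$ locally Lipschitz in $x_{p_1}$, hence possessing a unique solution $x_{p_1}(t)$ on a maximal interval $J_{max}$; correspondingly $x(t)=x_{p_1}(t)+\eta(t,x_{p_1}(t))$ is the unique solution of \eqref{DAE}, \eqref{ini} on $J_{max}$.

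Next, the crucial step is to record the dichotomy that conditions \ref{SoglReg}--\ref{RegAttractor} produce \emph{without} any reference to the fourth condition. By condition \ref{RegAttractor} the trajectory $x_{p_1}(t)$ never leaves $M_1$, so it cannot terminate by reaching $\partial M_1$; invoking the extension theorem \cite{Hartman} as in Case~2 of the proof of Theorem \ref{Th_GlobReg-LipschObl} (and noting that for $M_1=X_1$ the same alternatives hold trivially, since then the only escape mechanism in the full space $X_1$ is blow-up) one obtains that either $J_{max}=[t_0,\infty)$, or $J_{max}=[t_0,\beta)$ with $\beta<\infty$ and $\lim_{t\to\beta-0}\|x_{p_1}(t)\|=\infty$. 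This already establishes existence and uniqueness and reduces the theorem to deciding which alternative holds.

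Finally I would close with the two implications. If condition \ref{ExtensReg} holds, then all hypotheses of Theorem \ref{Th_GlobReg-LipschObl} are met, so that theorem gives $J_{max}=[t_0,\infty)$, i.e.\ the solution is global; equivalently, condition \ref{ExtensReg}, through the comparison result \cite[Chapter~IV, Theorem~XIII]{LaSal-Lef} and the absence of positive solutions of \eqref{L1v} with finite escape time, excludes the blow-up branch. If instead condition \ref{NeUstLagrReg} holds, then all hypotheses of Theorem \ref{Th_RegNeLagr} are met, so that theorem gives a finite escape time, i.e.\ the blow-up branch $J_{max}=[t_0,\beta)$, $\beta<\infty$, $\lim_{t\to\beta-0}\|x_{p_1}(t)\|=\infty$; here \cite[Chapter~IV, Theorem~XIV]{LaSal-Lef} together with the absence of global positive solutions of \eqref{L2v} excludes the global branch. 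The main obstacle, and the only genuinely new point, is the careful bookkeeping required to confirm that existence, uniqueness and the dichotomy depend only on conditions \ref{SoglReg}--\ref{RegAttractor}: one must check across all geometric cases for $M_1$ (namely $M_1=X_1$, $M_1^c$ bounded, both $M_1$ and $M_1^c$ unbounded, and $M_1$ bounded) that the fourth condition is used solely to select the branch and never to produce the dichotomy itself, after which the conclusion follows directly from the two previously proved theorems.
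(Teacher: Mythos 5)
Your proposal is correct and takes essentially the same route as the paper: the paper's proof of Theorem \ref{Th_CritGlobReg} is just the remark that it follows directly from Theorems \ref{Th_GlobReg-LipschObl} and \ref{Th_RegNeLagr}, whose proofs already establish (from conditions \ref{SoglReg}--\ref{RegAttractor} alone) the unique solution and the dichotomy $J_{max}=[t_0,\infty)$ versus finite-time blow-up, with the fourth condition serving only to select the branch. Your explicit bookkeeping of that dependence is precisely the content left implicit in the paper's one-line proof.
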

 \begin{corollary}\label{Coroll_CritGlobReg}
Theorem \ref{Th_CritGlobReg} remains valid if:
\begin{itemize}
[itemsep=1pt,parsep=0pt,topsep=1pt,listparindent=0cm,leftmargin=0.6cm]
\item  condition \ref{InvReg-Lipsch} of Theorem~\ref{Th_GlobReg-LipschObl} is replaced by condition \ref{InvReg} of Theorem~\ref{Th_GlobReg-Obl};
\item  condition \ref{RegAttractor} of Theorem~\ref{Th_GlobReg-LipschObl} is replaced by condition \ref{RegAttractCoroll} of Corollary~\ref{Coroll-GlobReg1};
\item  condition \ref{ExtensReg} of Theorem~\ref{Th_GlobReg-LipschObl} is replaced by condition \ref{ExtensRegCoroll} of Corollary~\ref{Coroll-GlobReg2},
\item condition \ref{NeUstLagrReg} of Theorem~\ref{Th_RegNeLagr} is replaced by condition \ref{NeUstLagrRegCoroll} of Corollary~\ref{Coroll-RegNeLagr}.
\end{itemize}
 \end{corollary}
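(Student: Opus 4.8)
The plan is to reduce the corollary to Theorem~\ref{Th_CritGlobReg} by verifying that each of the four substituted conditions either implies, or has already been shown to be interchangeable with, the original condition it replaces. Since Theorem~\ref{Th_CritGlobReg} is itself obtained by splicing together the global-solvability conclusion of Theorem~\ref{Th_GlobReg-LipschObl} and the blow-up conclusion of Theorem~\ref{Th_RegNeLagr}, it suffices to check that, under the substituted hypotheses, the premises of both of these theorems are met; no new analytic work is required beyond invoking the earlier corollaries.

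First I would treat the replacement of condition~\ref{InvReg-Lipsch} by condition~\ref{InvReg}. This is exactly the content of the proof of Theorem~\ref{Th_GlobReg-Obl}: there it is shown that if $f$ has a continuous partial derivative in $x$ and the operator $\Phi_{t_*,x_*}$ of \eqref{funcPhiInvReg} is invertible, then, taking this same $\Phi_{t_*,x_*}$, the contractivity estimate \eqref{ContractiveMapReg} holds with $q(\delta,\varepsilon)\to 0$ as $\delta,\varepsilon\to 0$, so that condition~\ref{InvReg-Lipsch} is satisfied. Thus condition~\ref{InvReg} $\Rightarrow$ condition~\ref{InvReg-Lipsch}, and the first substitution entails no loss. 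Next, the replacement of condition~\ref{RegAttractor} by condition~\ref{RegAttractCoroll} is justified by Corollary~\ref{Coroll-GlobReg1}: using Lemma~\ref{Lem-UltimSet} (which rests on \cite[p.~116, Lemma~1]{LaSal-Lef}), it is established there that under the hypotheses of \ref{RegAttractCoroll} the component $x_{p_1}(t)$ of every solution issuing from $P_ix_0\in M_i$ can never leave $M_1$ — which is precisely the conclusion asserted in \ref{RegAttractor}. Since the invariance of $M_1$ is the only way condition~\ref{RegAttractor} enters the proofs of both Theorem~\ref{Th_GlobReg-LipschObl} (in Case~2) and Theorem~\ref{Th_RegNeLagr} (in the extension argument), this substitution is harmless.

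The remaining two substitutions are handled in parallel. For the replacement of \ref{ExtensReg} by \ref{ExtensRegCoroll} I would invoke Corollary~\ref{Coroll-GlobReg2}: setting $\chi(t,v):=k(t)\,U(v)$ with $\int_{v_0}^{\infty} dv/U(v)=\infty$, the differential inequality \eqref{L1v} has no positive solution with finite escape time, so \ref{ExtensReg} holds with this particular $\chi$. Symmetrically, for the replacement of \ref{NeUstLagrReg} by \ref{NeUstLagrRegCoroll} I would invoke Corollary~\ref{Coroll-RegNeLagr}: with $\chi(t,v):=k(t)\,U(v)$, $\int_{k_0}^{\infty}k(t)\,dt=\infty$ and $\int_{v_0}^{\infty} dv/U(v)<\infty$, the inequality \eqref{L2v} has no global positive solution, so \ref{NeUstLagrReg} holds. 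Assembling these four implications, all hypotheses of Theorem~\ref{Th_CritGlobReg} are satisfied, whence its full conclusion — existence and uniqueness of the solution for each consistent initial point with $P_ix_0\in M_i$, globality under the \ref{ExtensRegCoroll}-branch, and finite escape time under the \ref{NeUstLagrRegCoroll}-branch — carries over verbatim.

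The only point demanding care, and the nearest thing to an obstacle, is confirming that the four substitutions are mutually independent, i.e. that replacing one condition does not disturb the role played by the others. This is clear on inspection: conditions~\ref{InvReg} and \ref{RegAttractCoroll} feed only into the construction of the implicit function $\eta$ of \eqref{eta} and into the invariance of $M_1$, both of which are shared by the two branches, while \ref{ExtensRegCoroll} and \ref{NeUstLagrRegCoroll} are the mutually exclusive alternatives selected according to which branch of the global/blow-up dichotomy is in force — exactly the same logical arrangement as in Theorem~\ref{Th_CritGlobReg} itself. Hence the splicing that proves the criterion theorem goes through unchanged with the substituted hypotheses.
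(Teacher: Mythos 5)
Your proposal is correct and follows essentially the same route as the paper: the paper's own proof simply states that Corollary~\ref{Coroll_CritGlobReg} follows directly from the previously established results, and your argument spells out exactly those ingredients (condition~\ref{InvReg} implying condition~\ref{InvReg-Lipsch} via the proof of Theorem~\ref{Th_GlobReg-Obl}, the invariance of $M_1$ via Corollary~\ref{Coroll-GlobReg1} and Lemma~\ref{Lem-UltimSet}, and the choice $\chi(t,v)=k(t)\,U(v)$ in Corollaries~\ref{Coroll-GlobReg2} and~\ref{Coroll-RegNeLagr}). Nothing is missing; your additional check that the four substitutions act independently in the two branches of the dichotomy is a sound, if implicit, part of the paper's reasoning as well.
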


 \begin{proof} Both Theorem \ref{Th_CritGlobReg} and Corollary \ref{Coroll_CritGlobReg} follow directly from the theorems and corollaries proved above.
 \end{proof}

 \subsection{The criterion of global solvability of singular semilinear DAEs}\label{CritGlobSolvSing}

  \begin{theorem}\label{Th_CritGlobSing}
Let $f\in C(\mT\times D,\Rm)$, where $D\subseteq \Rn$ is some open set and $\mT=[t_+,\infty)\subseteq [0,\infty)$, and let the operator pencil $\lambda A+B$ be a singular pencil such that its regular block $\lambda A_r+B_r$, where $A_r$, $B_r$ are defined in \eqref{srAB}, is a regular pencil of index not higher than 1. Let there exist an open set $M_{s1}\subseteq D_{s_1}\dot+D_1$ and sets $M_{s_2}\subseteq D_{s_2}$, $M_2\subseteq D_2$ such that conditions \ref{SoglSing}, \ref{InvSing-Lipsch} and \ref{SingAttractor} of Theorem \ref{Th_GlobSing-LipschObl} hold.

Then for each initial point ${(t_0,x_0)\in L_{t_+}}$ such that $(S_1+P_1)x_0\in M_{s1}$, $S_2x_0\in M_{s_2}$ and $P_2x_0\in M_2$, the IVP \eqref{DAE}, \eqref{ini} has a unique solution $x(t)$ for which the choice of the function $\phi_{s_2}\in C([t_0,\infty),M_{s_2})$ with the initial value  ${\phi_{s_2}(t_0)=S_2 x_0}$ uniquely defines the component $S_2x(t)=\phi_{s_2}(t)$ when ${\rank(\lambda A+B)<n}$ \,\textup{(}when ${\rank(\lambda A+B)=n}$, the component $S_2 x$ is absent\textup{)}, and this solution is global if condition \ref{ExtensSing} of Theorem \ref{Th_GlobSing-LipschObl} holds and has a finite escape time if condition \ref{NeUstLagrSing} of Theorem \ref{Th_SingNeLagr} holds.
 \end{theorem}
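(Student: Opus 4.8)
The plan is to obtain this criterion as an immediate consequence of the two principal singular theorems, exploiting the fact that their hypotheses overlap in exactly the first three conditions. First I would record that conditions \ref{SoglSing}, \ref{InvSing-Lipsch} and \ref{SingAttractor} are the common premises of both Theorem \ref{Th_GlobSing-LipschObl} and Theorem \ref{Th_SingNeLagr}. By the construction in the proof of Theorem \ref{Th_GlobSing-LipschObl} (which parallels that of Theorem \ref{Th_GlobReg-LipschObl}), these three conditions already reduce the DAE to the equivalent system \eqref{DAEsysExtDE1}--\eqref{DAEsysExtAE2}: condition \ref{SoglSing} together with \ref{InvSing-Lipsch} yields, through the contraction (implicit-function) argument, a unique locally Lipschitz solution $x_{p_2}=\eta(t,x_{s_1},x_{s_2},x_{p_1})$ of the algebraic equations, so that substitution into \eqref{DAEsysExtDE1}, \eqref{DAEsysExtDE2} produces a well-posed ODE whose unique local solution furnishes the differential component $x_{s_1}(t)+x_{p_1}(t)$, while $x_{s_2}(t)=\phi_{s_2}(t)$ is freely prescribed when $\rank(\la A+B)<n$.

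Next I would invoke condition \ref{SingAttractor}, which guarantees that $(S_1+P_1)x(t)$ never leaves $M_{s1}$ on the maximal interval of existence $J_{max}$, and then appeal to the extension theorems to establish the key dichotomy: either $J_{max}=[t_0,\infty)$, or $J_{max}=[t_0,\beta)$ with $\beta<\infty$ and $\lim\limits_{t\to\beta-0}\|x_{s_1}(t)+x_{p_1}(t)\|=\infty$. This dichotomy, which is exactly the conclusion reached in the common portion of the proofs of Theorems \ref{Th_GlobSing-LipschObl} and \ref{Th_SingNeLagr}, holds without reference to any growth hypothesis, because the invariance of $M_{s1}$ rules out the escape of $(S_1+P_1)x(t)$ through $\partial M_{s1}$ in finite time and leaves blow-up of the norm as the only remaining finite-time failure mode.

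The two cases then split according to the remaining assumption, with no new estimates required. If condition \ref{ExtensSing} is in force, all hypotheses of Theorem \ref{Th_GlobSing-LipschObl} are satisfied, so its conclusion excludes the finite-$\beta$ alternative and the solution is global; if instead condition \ref{NeUstLagrSing} holds, all hypotheses of Theorem \ref{Th_SingNeLagr} are satisfied, so its conclusion forces $J_{max}=[t_0,\beta)$ with $\beta<\infty$ and a finite escape time. In both applications the solution is the same object, since the trajectory $x_{s_1}(t)+x_{p_1}(t)$ is determined solely by conditions \ref{SoglSing}--\ref{SingAttractor} and is independent of which growth condition is appended. The only point requiring care --- and hence the main obstacle --- is the verification of the dichotomy itself from the shared conditions; since this step is already carried out in the proof of Theorem \ref{Th_GlobSing-LipschObl}, the criterion follows by simply combining the two theorems.
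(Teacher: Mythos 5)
Your proposal is correct and takes essentially the same route as the paper: the paper's own proof of Theorem \ref{Th_CritGlobSing} consists of the single observation that it follows directly from Theorems \ref{Th_GlobSing-LipschObl} and \ref{Th_SingNeLagr}, whose proofs share, under the common conditions \ref{SoglSing}--\ref{SingAttractor}, the construction of the unique maximal solution and the dichotomy (global existence versus finite-time norm blow-up). Your explicit remark that this common part determines one and the same solution independently of which growth condition is appended, so that each added condition merely selects one horn of the dichotomy, is exactly the reasoning the paper leaves implicit.
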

 \begin{corollary}\label{Coroll_CritGlobSing}
Theorem \ref{Th_CritGlobSing} remains valid if:
\begin{itemize}
[itemsep=1pt,parsep=0pt,topsep=1pt,listparindent=0cm,leftmargin=0.6cm]
\item  condition \ref{InvSing-Lipsch} of Theorem~\ref{Th_GlobSing-LipschObl} is replaced by condition \ref{InvSing} of Theorem~\ref{Th_GlobSing-Obl};
\item  condition \ref{SingAttractor} of Theorem~\ref{Th_GlobSing-LipschObl} is replaced by condition \ref{SingAttractCoroll} of Corollary~\ref{Coroll-GlobSing1};
\item  condition \ref{ExtensSing} of Theorem~\ref{Th_GlobSing-LipschObl} is replaced by condition \ref{ExtensSingCoroll} of Corollary~\ref{Coroll-GlobSing2},
\item condition \ref{NeUstLagrSing} of Theorem~\ref{Th_SingNeLagr} is replaced by condition \ref{NeUstLagrSingCoroll} of Corollary~\ref{Coroll-SingNeLagr}.
\end{itemize}
 \end{corollary}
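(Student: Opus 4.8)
The plan is to treat Corollary~\ref{Coroll_CritGlobSing} as a purely combinatorial consequence of the substitution results already established, in exact parallel with the one-line proof of Corollary~\ref{Coroll_CritGlobReg} in the regular case. The criterion Theorem~\ref{Th_CritGlobSing} is assembled from two halves that share a common core: its hypotheses \ref{SoglSing}, \ref{InvSing-Lipsch}, \ref{SingAttractor} set up the reduction to the system \eqref{DAEsysExtDE1}--\eqref{DAEsysExtAE2}, the implicit-function solution $x_{p_2}=\eta$ of the algebraic part, and the invariance of $M_{s1}$; then the dichotomy is decided by a single Lyapunov-type hypothesis, with \ref{ExtensSing} forcing globality (via Theorem~\ref{Th_GlobSing-LipschObl}) and \ref{NeUstLagrSing} forcing a finite escape time (via Theorem~\ref{Th_SingNeLagr}).

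First I would recall that each of the four replacements listed in the corollary has already been certified as admissible, individually, in a result proved above. Indeed, Theorem~\ref{Th_GlobSing-Obl} states that Theorem~\ref{Th_GlobSing-LipschObl} remains valid when \ref{InvSing-Lipsch} is replaced by \ref{InvSing}; Corollary~\ref{Coroll-GlobSing1} certifies the replacement of \ref{SingAttractor} by \ref{SingAttractCoroll}; Corollary~\ref{Coroll-GlobSing2} certifies the replacement of \ref{ExtensSing} by \ref{ExtensSingCoroll}; and Corollary~\ref{Coroll-SingNeLagr} certifies, on the blow-up side, the replacement of \ref{NeUstLagrSing} by \ref{NeUstLagrSingCoroll}. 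In every case the proof of the corresponding auxiliary result shows that the new hypothesis implies, or yields the same conclusion as, the original one, while leaving the untouched hypotheses and the reduced equations unchanged.

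Since the four substituted conditions are all formulated over the same sets $M_{s1}$, $M_{s_2}$, $M_2$, invoke the same manifold $L_{t_+}$, and refer to the same reduced system, I would conclude that performing all four replacements at once preserves the hypotheses required by each half of the criterion. Consequently the conclusion of Theorem~\ref{Th_CritGlobSing} is unchanged: for each consistent initial point $(t_0,x_0)\in L_{t_+}$ with $(S_1+P_1)x_0\in M_{s1}$, $S_2x_0\in M_{s_2}$, $P_2x_0\in M_2$, the IVP \eqref{DAE}, \eqref{ini} has a unique solution (with $S_2x$ determined by the free function $\phi_{s_2}$ when $\rank(\lambda A+B)<n$), which is global if \ref{ExtensSingCoroll} holds and has a finite escape time if \ref{NeUstLagrSingCoroll} holds. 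I do not expect a genuine obstacle here, since all the analytic work resides in the already-proved substitution results; the only point needing a remark is that the four replacements are logically independent and may therefore be carried out simultaneously, which is immediate because each one modifies a distinct hypothesis of the common core or of the separate global and blow-up branches.
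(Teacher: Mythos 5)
Your proposal is correct and follows essentially the same route as the paper, whose proof of Corollary~\ref{Coroll_CritGlobSing} is simply the observation that it follows directly from the theorems and corollaries proved above (Theorem~\ref{Th_GlobSing-Obl} and Corollaries~\ref{Coroll-GlobSing1}, \ref{Coroll-GlobSing2}, \ref{Coroll-SingNeLagr}). Your elaboration that the four replacements affect logically independent hypotheses and can therefore be performed simultaneously is exactly the implicit content of the paper's one-line argument.
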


 \begin{proof} Both Theorem \ref{Th_CritGlobSing} and Corollary \ref{Coroll_CritGlobSing} follow directly from the theorems and corollaries proved above.
 \end{proof}

  \section{Examples}\label{Sect-Example}

 \subsection{Example 1 (a regular DAE)}\label{Example1}

Consider the system of differential and algebraic equations
 \begin{align}
\dot{x}_1 &=x_1^2,    \label{Ex1DE} \\
x_1+x_2 &=\varphi(t), \label{Ex1AE}
 \end{align}
where $\varphi\in C(\mT,\R)$, $\mT= [0,\infty)$, and $x_i=x_i(t)$, $i=1,2$, are real functions. The system \eqref{Ex1DE}, \eqref{Ex1AE} can be represented in the form of the DAE $\frac{d}{dt}[Ax]+Bx=f(t,x)$, i.e., \eqref{DAE}, where
\begin{equation}\label{Ex1DAE} 
x=\begin{pmatrix} x_1 \\  x_2 \end{pmatrix},\quad
A=\begin{pmatrix} 1 & 0 \\ 0 & 0 \end{pmatrix},\quad
B=\begin{pmatrix} 0 & 0 \\ 1 & 1 \end{pmatrix},\quad
f(t,x)=\begin{pmatrix} x_1^2 \\ \varphi(t) \end{pmatrix},
\end{equation}
$f\in C(\mT\times D,\R^2)$, $D=\R^2$.
It can be readily verified that the characteristic pencil $\lambda A+B$ is a regular pencil of index 1. The initial condition is given as $x(t_0)=x_0$, i.e., \eqref{ini}. It is clear that the initial values $t_0$, $x_0=(x_{1,0},x_{2,0})^\T$ must satisfy the equation \eqref{Ex1AE}, that is, $x_{1,0}+x_{2,0}=\varphi(t_0)$. The consistency condition $(t_0,x_0)\in L_0$ means the same (as shown below).

 \smallskip
\emph{A solution of the IVP for the system \eqref{Ex1DE}, \eqref{Ex1AE} with the initial condition \eqref{ini} has the form $x_1(t)=\dfrac{1}{x_{1,0}^{-1}+t_0-t}$,\, $x_2(t)=\varphi(t)-x_1(t)$ when $x_{1,0}\ne 0$, and $x_1(t)\equiv 0$,\, $x_2(t)=\varphi(t)$ when $x_{1,0}= 0$} (accordingly, a solution of the DAE \eqref{DAE}, \eqref{Ex1DAE} is $x(t)=(x_1(t),x_2(t))^\T$ with the specified components $x_1(t)$, $x_2(t)$). \emph{Obviously, the solution is global} (i.e., exists on the interval $[t_0,\infty)$) \emph{if $x_{1,0}<0$, and it has the finite escape time $t_0+x_{1,0}^{-1}$} (i.e., exists on the finite interval $[t_0,T)$, where $T=t_0+x_{1,0}^{-1}$, and $\lim\limits_{t\to T-0} \|x(t)\|=+\infty$)\, \emph{if $x_{1,0}>0$. Let us prove this, using the theorems obtained above.} First we prove that the solution is global if $x_{1,0}<0$, then we prove that the solution has the finite escape time if $x_{1,0}>0$; in both cases, $x_{2,0}\in \R$ is such that \eqref{Ex1AE} is satisfied, i.e., $x_{2,0}=\varphi(t_0)-x_{1,0}$.

 \smallskip
First, construct the pairs $P_i\colon \R^2\to X_i$, $i=1,2$, and $Q_j\colon \R^2\to Y_j$, $j=1,2$, of mutually complementary projectors satisfying \eqref{ProjPropInd1} and the subspaces $X_i$, $Y_i$, $i=1,2$, from the corresponding direct decompositions \eqref{rr}, where $X_r=Y_r=\R^2$, as indicated in Appendix \ref{Appendix} (see Remarks \ref{Rem-RegCase}, \ref{Rem-RegPenc}). The projection matrices (which for brevity we will also call projectors) corresponding to the mentioned projectors with respect to the standard bases in $\R^2$ (a basis in $\R^n$ is standard if the $i$th coordinate of the basis vector $e_j$, $j=1,...,n$, is equal to $\delta_{ij}$) have the form:
$$
P_1=\begin{pmatrix} 1 & 0 \\ -1 & 0 \end{pmatrix},\quad
P_2=\begin{pmatrix} 0 & 0 \\ 1 & 1 \end{pmatrix},\quad
Q_1=\begin{pmatrix} 1 & 0 \\ 0 & 0 \end{pmatrix},\quad
Q_2=\begin{pmatrix} 0 & 0 \\ 0 & 1 \end{pmatrix}.
$$
The mentioned subspaces have the form
$$
X_1=\lin\{e_1\},\quad X_2=\lin\{e_2\},\qquad Y_1=\lin\{q_1\},\quad Y_2=\lin\{q_2\},
$$
where $e_1=(1,-1)^\T$, $e_2=(0,1)^\T$, $q_1=(1,0)^\T$ and $q_2=(0,1)^\T$. Hence, $\{e_1,\, e_2\}$ is the basis of $\R^2 = X_1\dot +X_2$ and $\{q_1,\, q_2\}$ is the basis of $\R^2 =Y_1\dot +Y_2$. Note that $D=\R^2$ and $D_i=X_i=P_i\R^2$, $i=1,2$ (see \eqref{Drr}).
The components of $x\in \R^2$ represented as $x= x_{p_1}+x_{p_2}$ (see \eqref{xrr}), where $x_{p_i}\in X_i$, $i=1,2$,  have the form
 $$
x_{p_1}=P_1 x=x_1 e_1,\quad x_{p_2}=P_2 x=(x_1+x_2) e_2,
 $$
where $e_1$, $e_2$ are the basis vectors defined above. If we make the change of
variables
$$x_1=z,\quad x_1+x_2=u,
$$
then $x_{p_1}=z\, e_1$,\; $x_{p_2}=u\, e_2$ and the system \eqref{Ex1DE}, \eqref{Ex1AE} takes the following form:\; ${\dot{z}=z^2}$,\; ${u=\varphi(t)}$.

Notice that $\Es A_1=A$, $\Es B_1=0$, $\Es B_2=B$ and $\Es A_1^{(-1)}=\left(\begin{smallmatrix} 1 & 0 \\ -1 & 0 \end{smallmatrix}\right)$   are the matrices  corresponding to the operators (with respect to the standard bases in $\R^2$) defined by \eqref{ABrrExtend}, \eqref{InvA1}.

The equation $Q_2[f(t,x)-Bx]=0$ defining the manifold $L_0$ (i.e., \eqref{L_treg} where $t_*=0$) is equivalent to \eqref{Ex1AE} or $u=\varphi(t)$ ($u=x_1+x_2$). Consequently, condition \ref{SoglReg} of Theorem \ref{Th_GlobReg-LipschObl} is satisfied for any set $M_1\subseteq X_1$ and $M_2=X_2$.

Choose
 \begin{equation}\label{SetsGlobalSolvEx1}
M_1=\{x_{p_1}=x_1 (1,-1)^\T\mid x_1\in (-\infty,0)\},\;\; M_2=X_2=\{x_{p_2}=(x_1+x_2) (0,1)^\T\mid x_1+x_2\in \R\},
 \end{equation}
where $x_1=z$, $x_1+x_2=u$ if the new variables are used. Hence, $x_{p_1}\in M_1$ iff $x_1\in (-\infty,0)$, and, in general, for any $x=(x_1,x_2)^\T=x_{p_1}+x_{p_2}\in M_1\dot+ M_2$ (i.e., $x_{p_1}\in M_1$ and $x_{p_2}\in M_2$), the components $x_1$, $x_2$ are such that $x_1<0$, $x_2\in \R$.
Let us verify  whether all conditions of Theorem \ref{Th_GlobReg-LipschObl} are satisfied.

The function $f(t,x)$ has the continuous partial derivative with respect to $x$ on $\mT\times D$ (recall that $\mT= [0,\infty)$ and $D=\R^2$) and therefore will use condition \ref{InvReg} of Theorem \ref{Th_GlobReg-Obl}. The operator \eqref{funcPhiInvReg} takes the form  $\Phi_{t_*,x_*}=-B\big|_{X_2}=-B_2\colon X_2\to Y_2$ for any fixed $t_*\in \mT$, $x_*\in D$, and hence the matrix $\Phi_{t_*,x_*}=-1$ corresponds to the operator  $\Phi_{t_*,x_*}$  with respect to the bases $e_2$ and $q_2$ in $X_2$ and $Y_2$, respectively. Since the operator $\Phi_{t_*,x_*}$ has the inverse $\Phi_{t_*,x_*}^{-1}=-B_2^{-1}\in \mathrm{L}(Y_2,X_2)$ for any fixed $t_*\in \mT$, $x_*\in D$, then condition \ref{InvReg} of Theorem \ref{Th_GlobReg-Obl} is fulfilled.

Let us prove that the component $x_{p_1}(t)$ of each solution $x(t)$ of the DAE \eqref{DAE}, \eqref{Ex1DAE} with the initial point $(t_0,x_0)\in L_0$ for which $P_ix_0\in M_i$, $i=1,2$, can never leave $M_1$. Note that any $t_0\in [0,\infty)$, $x_0=(x_{1,0},x_{2,0})^\T$ such that $x_{1,0}<0$ and $x_{2,0}=\varphi(t_0)-x_{1,0}\in \R$ are consistent initial values (i.e., $(t_0,x_0)\in L_0$) for which $P_ix_0\in M_i$, $i=1,2$.
Since the DAE \eqref{DAE}, \eqref{Ex1DAE} is equivalent to the system of the equations \eqref{DAEsysExtReg1}, \eqref{DAEsysExtReg2} which are  equivalent to the equations  \eqref{Ex1DE}, \eqref{Ex1AE}, respectively, and $x_{p_1}\in M_1$ iff $x_1\in (-\infty,0)$, then we need to prove that a solution $x_1=\widehat{x}_1(t)$ of \eqref{Ex1DE} with the initial values $t_0\in [0,\infty)$, $\widehat{x}_1(t_0)=x_{1,0}\in (-\infty,0)$ can never leave the set $\widehat{M}_1:=\{x_1\in (-\infty,0)\}$. Suppose that the solution $\widehat{x}_1(t)$ can leave $\widehat{M}_1$, then it should cross the boundary $\partial \widehat{M}_1=\{x_1=0\}$ of $\widehat{M}_1$. Consequently, there exists $t_1>t_0$ such that $\widehat{x}_1(t_1)=0$. It is obviously that the IVP for the equation \eqref{Ex1DE} with the initial condition $x_1(t_0)=0$ has the unique solution $x_1=\widetilde{x}_1(t)\equiv 0$ on $[t_0,\infty)$. Since the solutions $\widehat{x}_1(t)$ and $\widetilde{x}_1(t)$ coincide at the point $t_1$, then by uniqueness they coincide on $[t_0,t_1]$ and hence
$\widehat{x}_1(t)=0$ on $[t_0,t_1]$, which is impossible, since $\widehat{x}_1(t_0)=x_{1,0}<0$. Thus, the solution $\widehat{x}_1(t)$ cannot leave $\widehat{M}_1$. It follows from the above that condition \ref{RegAttractor} of Theorem \ref{Th_GlobReg-LipschObl} is satisfied.

Define the function $V(t,x_{p_1})\equiv V(x_{p_1}):=0.5x_{p_1}^\T x_{p_1}=x_1^2$. This function is positive for $x_{p_1}\ne 0$ and, obviously, satisfies condition \ref{ExtensReg1} of Theorem \ref{Th_GlobReg-LipschObl}. For the DAE \eqref{DAE} with \eqref{Ex1DAE}, the equation \eqref{DAEsysExtReg1} takes the form
$$
{\dot{x}_{p_1}=\Pi(t,x_{p_1},x_{p_2})},\quad \text{where}\quad   \Pi(t,x_{p_1},x_{p_2})\equiv \Pi(x_{p_1})=x_1^2\,(1,-1)^\T,
$$
and the derivative of the function $V$ along the trajectories of \eqref{DAEsysExtReg1} satisfies the inequality
 \begin{equation}\label{LagrDAERegEx1}
\dot{V}_\eqref{DAEsysExtReg1}(x_{p_1})=2x_{p_1}^\T \Pi(x_{p_1})= 4x_1^3<x_1^2=V(x_{p_1})
 \end{equation}
for each $x_{p_1}\in M_R$ where $M_R=\{x_{p_1}\in M_1\mid \|x_{p_1}\|> R\}=\{x_{p_1}=x_1 (1,-1)^\T\mid x_1< -R\,\}$, $R>0$ is some number. Therefore, the differential inequality \eqref{L1v} takes the form $\dot{v}\le v$. It is easy to verify that this inequality does not have positive solutions with finite escape time. For example, the inequality ${\dot{v}\le v}$ can be written as ${\dot{v}\le k(t)\,U(v)}$, where $k(t)\equiv 1$ and $U(v)=v$, and since $\int\limits_{{\textstyle v}_0}^{\infty}\dfrac{dv}{U(v)} =\infty$ ($v_0>0$), then this inequality does not have positive solutions with finite escape time. Hence,  condition \ref{ExtensReg} of Theorem \ref{Th_GlobReg-LipschObl} holds. Notice that condition \ref{ExtensRegCoroll} of Corollary \ref{Coroll-GlobReg2}, where the functions $k$, $U$ and $V$ are the same as defined above, is also satisfied.

Thus, \emph{by Theorem \ref{Th_GlobReg-LipschObl} (or Theorem \ref{Th_CritGlobReg}), where condition \ref{InvReg-Lipsch} is replaced by condition \ref{InvReg} of Theorem \ref{Th_GlobReg-Obl}, there exists a unique global solution of the IVP \eqref{DAE}, \eqref{Ex1DAE}, \eqref{ini} for each initial point $(t_0,x_0)$, where $x_0=(x_{1,0},x_{2,0})^\T$, such that $t_0\in [0,\infty)$, $x_{1,0}<0$, $x_{2,0}\in \R$ and $x_{2,0}=\varphi(t_0)-x_{1,0}$ (i.e., $(t_0,x_0)$ satisfies \eqref{Ex1AE}).}

\medskip
Now, choose
$$
M_1=\{x_{p_1}=x_1 (1,-1)^\T\mid x_1\in (0,\infty)\}
$$
and the same $M_2=X_2$ as in \eqref{SetsGlobalSolvEx1} (where $x_1=z$, $x_1+x_2=u$ if the new variables are used). Hence, $x_{p_1}\in M_1$ iff $x_1\in (0,\infty)$.

Thus, consistent initial values $t_0$, $x_0=(x_{1,0},x_{2,0})^\T$, for which $P_1x_0\in M_1$, have $x_{1,0}>0$. Let us check whether the conditions of Theorem \ref{Th_RegNeLagr} on the blow-up of solutions are satisfied.

Recall that Theorem \ref{Th_RegNeLagr} contains the same conditions as Theorem~\ref{Th_GlobReg-LipschObl} (or \ref{Th_GlobReg-Obl}), except for condition~\ref{NeUstLagrReg}, but it is necessary to check whether the conditions are satisfied for the new set $M_1$.

It follows directly from the above that condition \ref{SoglReg} of Theorem \ref{Th_GlobReg-LipschObl} and condition \ref{InvReg} of Theorem \ref{Th_GlobReg-Obl} are fulfilled.

We will prove that condition \ref{RegAttractor} of Theorem \ref{Th_GlobReg-LipschObl} holds. Since $x_{p_1}\in M_1$ iff $x_1\in (0,\infty)$, then, by the same arguments as above,  we need to prove that a solution $x_1=x_1(t)$ of \eqref{Ex1DE} with the initial values $t_0\in [0,\infty)$, $x_1(t_0)=x_{1,0}\in (0,\infty)$ can never leave the set $\widehat{M}_1:=\{x_1\in (0,\infty)\}$. Indeed, since $\dot{x}_1=0$ along the boundary $\partial \widehat{M}_1=\{x_1=0\}$ of $\widehat{M}_1$ and $\dot{x}_1>0$ inside $\widehat{M}_1$, then each solution of \eqref{Ex1DE} which at the initial moment $t_0\in [0,\infty)$ is in  $\widehat{M}_1$ can never thereafter leave it.

Further, take the same function $V$ as above, i.e., $V(t,x_{p_1})\equiv V(x_{p_1}):=0.5x_{p_1}^\T x_{p_1}=x_1^2$. Then $\dot{V}_\eqref{DAEsysExtReg1}(x_{p_1})=2x_{p_1}^\T \Pi(x_{p_1})= 4x_1^3$ (see \eqref{LagrDAERegEx1}) and, hence, it satisfies the inequality
 \begin{equation*}
\dot{V}_\eqref{DAEsysExtReg1}(x_{p_1})=4V^{\frac{3}{2}}(x_{p_1})
 \end{equation*}
for each $x_{p_1}\in M_1$ ($x_1>0$). Therefore, the differential inequality \eqref{L2v} takes the form $\dot{v}=4 v^{3/2}$. It is easy to verify that this inequality does not have global positive solutions. For example, it can be written as ${\dot{v}\le k(t)\,U(v)}$, where $k(t)\equiv 4$ and $U(v)=v^{3/2}$, and since ${\int\limits_{k_0}^{\infty}k(t)dt=\infty}$ and $\int\limits_{{\textstyle v}_0}^{\infty}\dfrac{dv}{U(v)}= 2v_0^{-\frac{1}{2}} <\infty$  (\,${k_0,v_0>0}$ are constants), then this inequality does not have global positive solutions. Hence,  condition \ref{NeUstLagrReg} of Theorem \ref{Th_RegNeLagr} holds.  Notice that condition \ref{NeUstLagrRegCoroll} of Corollary \ref{Coroll-RegNeLagr}, where the functions $k$, $U$ and $V$ are the same as defined above, is also fulfilled.

Thus, \emph{by Theorem \ref{Th_RegNeLagr} (or Theorem \ref{Th_CritGlobReg}), for each initial point $(t_0,x_0)$, where $x_0=(x_{1,0},x_{2,0})^\T$, such that $t_0\in [0,\infty)$, $x_{1,0}>0$, $x_{2,0}\in \R$ and $x_{2,0}=\varphi(t_0)-x_{1,0}$,  there exists a unique solution of the IVP \eqref{DAE}, \eqref{Ex1DAE}, \eqref{ini} and this solution has a finite escape time (is blow-up in finite time).}

 \subsection{Example 2 (a regular DAE)}\label{Example2}

Consider the system of differential and algebraic equations
 \begin{align}
\dot{x}_1 &=2x_2,  \label{Ex2DE} \\
x_2 &=x_1x_2-a^2  \label{Ex2AE}
 \end{align}
where $a\ne 0$ ($a\in\R$), $t,t_0\in \mT= [0,\infty)$,  and $x_i=x_i(t)$, $i=1,2$, are real functions.  The system \eqref{Ex2DE}, \eqref{Ex2AE} can be represented as the DAE \eqref{DAE}, i.e., $\frac{d}{dt}[Ax]+Bx=f(t,x)$, where
\begin{equation}\label{Ex2DAE}
x=\begin{pmatrix} x_1 \\  x_2 \end{pmatrix},\quad
A=\begin{pmatrix} 1 & 0 \\ 0 & 0 \end{pmatrix},\quad
B=\begin{pmatrix} 0 & 0 \\ 0 & 1 \end{pmatrix},\quad
f(t,x)\equiv f(x)=\begin{pmatrix} 2x_2 \\ x_1x_2-a^2  \end{pmatrix},
\end{equation} 
$f\in C^\infty(D,\R^2)$, $D=\R^2$. It is easy to verify that the characteristic pencil $\lambda A+B$ is a regular pencil of index 1. For the DAE \eqref{DAE}, \eqref{Ex2DAE}, we use the initial condition \eqref{ini}, i.e., $x(t_0)=x_0$, where $x_0=(x_{1,0},x_{2,0})^\T$. Obviously, $x_{1,0}$ and $x_{2,0}$ must satisfy \eqref{Ex2AE}.

Construct the projectors \eqref{ProjR} (where $n=m=2$) and the subspaces from the direct decompositions \eqref{rr} (where $X_r=Y_r=\R^2$). With respect to the standard bases in $\R^2$, the projection matrices
 \begin{equation}\label{Ex2Project}
P_1=\begin{pmatrix} 1 & 0 \\ 0 & 0 \end{pmatrix},\quad
P_2=\begin{pmatrix} 0 & 0 \\ 0 & 1 \end{pmatrix},\quad
Q_1=\begin{pmatrix} 1 & 0 \\ 0 & 0 \end{pmatrix},\quad
Q_2=\begin{pmatrix} 0 & 0 \\ 0 & 1 \end{pmatrix}
 \end{equation}
correspond to the projectors $P_i\colon \R^2\to X_i$, $Q_i\colon \R^2\to Y_i$ ($i=1,2$) from \eqref{ProjR}. Therefore, the matrices $\Es A_1=A$, $\Es B_1=0$, $\Es B_2=B$ and $A_1^{(-1)}=\left(\begin{smallmatrix} 1 & 0 \\ 0 & 0 \end{smallmatrix}\right)$ correspond to the operators, defined by \eqref{ABrrExtend} and \eqref{InvA1}, with respect to the standard bases in $\R^2$. The subspaces from \eqref{rr} have the form\;
$X_1=\lin\{e_1\}$,\; $X_2=\lin\{e_2\}$,\; $Y_1=\lin\{q_1\}$,\; $Y_2=\lin\{q_2\}$,\;
where  $e_1=(1,0)^\T$, $e_2=(0,1)^\T$, $q_1=(1,0)^\T$ and $q_2=(0,1)^\T$. Thus, $\{e_1,\, e_2\}$ is the basis of $\R^2 = X_1\dot +X_2$ and $\{q_1,\, q_2\}$ is the basis of $\R^2 =Y_1\dot +Y_2$; obviously, the bases coincide with the standard basis of $\R^2$. With respect to the direct decomposition $\R^2 = X_1\dot +X_2$, a vector $x\in \R^2$ can be uniquely represented in the form \eqref{xrr} where
 \begin{equation}\label{Ex2-xrr}
x_{p_1}=P_1 x=x_1(1,0)^\T,\quad x_{p_2}=P_2 x=x_2(0,1)^\T.
 \end{equation}
Note that $x_{p_1}=x_1$ with respect to the chosen basis $e_1$ in $X_1$ and $x_{p_2}=x_2$ with respect to the chosen basis $e_2$ in $X_2$.
The sets $D_i$ ($i=1,2$) from the decomposition \eqref{Drr}, where $D=\R^2$, have the form
 \begin{equation}\label{D_1D_2}
D_1=X_1=P_1\R^2=\{x_{p_1}=(x_1,0)^\T\mid x_1\in \R\},\;\; D_2=X_2=P_2\R^2=\{x_{p_2}=(0,x_2)^\T\mid x_2\in \R\}.
 \end{equation}

 \smallskip
Further, using the theorems from Section \ref{SectGlobReg},  we will prove that the DAE \eqref{DAE}, \eqref{Ex2DAE}  with the initial condition \eqref{ini} has a unique global solution for any initial values $t_0\in \mT$, $x_0=(x_{1,0},x_{2,0})^\T\in \R^2$ such that \eqref{Ex2AE} is satisfied and $P_ix_0\in M_i$, $i=1,2$, where $M_i$ will be specified below.

The consistency condition $(t,x)\in L_0$ holds if $(t,x)$ satisfies the equation \eqref{Ex2AE}, which is equivalent to the algebraic equation  $Q_2[f(t,x)-Bx]=0$ defining the manifold $L_0$ and can be rewritten as
\begin{equation}\label{Ex2AEequiv}
x_2 =a^2(x_1-1)^{-1}.
\end{equation}
Thus, for any fixed ${t\in \mT}$, $x_1\in \R\setminus \{1\}$ there exists a unique $x_2\in \R\setminus \{0\}$ such that the equation \eqref{Ex2AEequiv} is satisfied.
Choose
 \begin{equation}\label{SetsGlobalSolvEx2}
M_1=\{x_{p_1}=(x_1,0)^\T\in X_1\mid x_1\in \R\setminus \{1\}\},\quad
M_2=\{x_{p_2}=(0,x_2)^\T\in X_2\mid x_2\in \R\setminus \{0\}\}.
 \end{equation}
Then condition \ref{SoglReg} of Theorem \ref{Th_GlobReg-LipschObl} holds.
Note that $x_{p_1}\in M_1$ iff $x_1\in \R\setminus \{1\}$ and $x_{p_2}\in M_2$ iff $x_2\in \R\setminus \{0\}$.

The matrix corresponding to the operator \eqref{funcPhiInvReg}, where $t_*$, $x_*=(x_1^*,x_2^*)^\T$ are fixed,  has the form $\Phi_{t_*,x_*}=x_1^*-1$ with respect to the basis $e_2$ of $X_2$ and the basis $q_2$ of $Y_2$. Hence, the operator $\Phi_{t_*,x_*}$ has the inverse $\Phi_{t_*,x_*}^{-1}\in \mathrm{L}(Y_2,X_2)$ for any fixed $t_*\in \mT$, $x_{p_1}^*=(x_1^*,0)^\T\in M_1$ (i.e., $x_1^*\ne 1$) and $x_{p_2}^*=(0,x_2^*)^\T\in M_2$. Consequently, condition \ref{InvReg} of Theorem \ref{Th_GlobReg-Obl} holds.

The function $\Pi$ defined in \eqref{Pi} has the form\; $\Pi(t,x_{p_1},x_{p_2})\equiv \Pi(x_{p_1},x_{p_2})= (2x_2,0)^\T$,\;
and the equation \eqref{DAEsysExtReg1}, which is equivalent to \eqref{Ex2DE}, takes the form $\dot{x}_{p_1}=\Pi(x_{p_1},x_{p_2})$. Therefore, the sets $D_{\Pi,i}$, $i=1,2$, specified in condition \ref{RegAttractCoroll} of Corollary \ref{Coroll-GlobReg1}, have the form ${D_{\Pi,i}=X_i}$. It is clear that $D_{c,i}=M_i$, $i=1,2$, and hence ${\widetilde{D_i}= M_i}$, $i=1,2$, where $D_{c,i}$ and $\widetilde{D_i}$ are specified in the same condition. Define the function
$$
W(t,x_{p_1})\equiv W(x_{p_1}):=-(x_1-1,0)\, (x_1-1,0)^\T=-(x_1-1)^2
$$
and a family of closed sets
$$
K_r=\{x_{p_1}=(x_1,0)^\T\in M_1\mid x_1\in (-\infty,1-r]\cup [1+r,+\infty)\,\},\quad {0<r<<1},
$$ 
i.e., $r>0$ is sufficiently small. Further, we will verify that condition \ref{RegAttractCoroll} of Corollary \ref{Coroll-GlobReg1} is satisfied.  Since $M_1^c\cap \widetilde{D_1}=M_1^c\cap M_1=\emptyset$, then there is no need to check that the inequality $W(x_{p_1}^1)<W(x_{p_1}^2)$ holds for ${x_{p_1}^1\in K_r}$ and $x_{p_1}^2\in M_1^c\cap \widetilde{D_1}$, but generally it holds for ${x_{p_1}^1\in K_r}$ and $x_{p_1}^2\in M_1^c=\{x_{p_1}=(x_1,0)^\T\mid x_1=1\}$.
Since $\partial_{x_{p_1}} W(x_{p_1})=-2 (x_1-1,0)$ then  $\dot{W}_\eqref{DAEsysExtReg1}(x_{p_1})=-4(x_1-1)x_2$. Recall that the consistency condition $(t,x)\in L_0$ is satisfied if $(t,x)$ satisfies the equation \eqref{Ex2AE} or \eqref{Ex2AEequiv}. Hence, for each $t\in \mT$, $x_{p_1}\in K_r^c\cap \widetilde{D_1}=\{x_{p_1}=(x_1,0)^\T\in X_1\mid x_1\in (1-r,1)\cup (1,1+r)\,\}$, $x_{p_2}\in \widetilde{D_2}=M_2$ such that $t$, $x_1$, $x_2$ satisfy \eqref{Ex2AEequiv} (i.e., $(t,x_{p_1}+x_{p_2})\in L_0$), the inequality \eqref{RegAttractIneq}, which takes the form $\dot{W}_\eqref{DAEsysExtReg1}(x_{p_1})=-4(x_1-1)a^2(x_1-1)^{-1}=-4a^2\le 0$, is fulfilled. Consequently, condition \ref{RegAttractCoroll} of Corollary~\ref{Coroll-GlobReg1} holds.

Define the function ${V(t,x_{p_1})\equiv V(x_{p_1}):=x_{p_1}^\T x_{p_1}=x_1^2}$. Obviously, the function $V$ satisfies condition \ref{ExtensReg1} of Theorem \ref{Th_GlobReg-LipschObl}, and $\dot{V}_\eqref{DAEsysExtReg1}(x_{p_1})=4x_1x_2$   (as mentioned above, \eqref{DAEsysExtReg1} has the form $\dot{x}_{p_1}=\Pi(x_{p_1},x_{p_2})$ where $\Pi(x_{p_1},x_{p_2})= (2x_2,0)^\T$).  For each $t\in \mT$, $x_{p_1}\in M_R=\{x_{p_1}=(x_1,0)^\T\in M_1\mid |x_1|>R\,\}$, where ${R>>1}$ (i.e., $R>0$ is sufficiently large), $x_{p_2}\in M_2$ such that $t$, $x_1$, $x_2$ satisfy \eqref{Ex2AEequiv}, the inequality $\dot{V}_\eqref{DAEsysExtReg1}(x_{p_1})=4x_1a^2(x_1-1)^{-1}\le 4a^2$ holds. Therefore, the inequality \eqref{L1v} takes the form $\dot{v}\le 4a^2$, and it is easy to verify that this inequality does not have positive solutions with finite escape time. Hence, condition \ref{ExtensReg} of Theorem \ref{Th_GlobReg-LipschObl} is fulfilled.

 \smallskip
Thus, \emph{by Theorem \ref{Th_GlobReg-LipschObl} (or Theorem \ref{Th_CritGlobReg}), where conditions \ref{InvReg-Lipsch} and \ref{RegAttractor} are replaced by conditions \ref{InvReg} of Theorem \ref{Th_GlobReg-Obl} and  \ref{RegAttractCoroll} of Corollary \ref{Coroll-GlobReg1} respectively, there exists a unique global solution of the IVP \eqref{DAE}, \eqref{Ex2DAE}, \eqref{ini} for each initial point $(t_0,x_0)$, where $x_0=(x_{1,0},x_{2,0})^\T$, such that $t_0\in [0,\infty)$, $x_{1,0}\in \R\setminus \{1\}$, $x_{2,0}\in \R\setminus \{0\}$} (\emph{i.e., $P_ix_0\in M_i$, $i=1,2$}),  \emph{and $x_{2,0}=x_{1,0}x_{2,0}-a^2$} (\emph{i.e., $(t_0,x_0)\in L_0$}).

 \smallskip
Indeed, \emph{a solution of the DAE \eqref{DAE}, \eqref{Ex2DAE}, which satisfies the initial condition \eqref{ini} where $x_0=(x_{1,0},x_{2,0})^\T$, has the form $x(t)=(x_1(t),x_2(t))^\T$ where $x_1(t)=\sgn (x_{1,0}-1)\, \big(4a^2(t-t_0)+(x_{1,0}-1)^2\big)^{1/2}+1$ and
$x_2(t)=\sgn (x_{1,0}-1)a^2\big(4a^2(t-t_0)+(x_{1,0}-1)^2\big)^{-1/2}$, and therefore it exists and is global for $t_0\ge 0$, $x_{1,0}\ne 1$ and $x_{2,0}=a^2(x_{1,0}-1)^{-1}\ne 0$}.

 \subsection{Example 3 (a regular DAE)}\label{Example3}

In this section, we consider the system of differential and algebraic equations
 \begin{align}
\dot{x}_1+b\, x_1 &=0,  \label{Ex3DE}  \\
x_1^2+x_2^2 &=1,    \label{Ex3AE}
 \end{align}
where $t\in \mT\!= [0,\infty)$, $b\in\R$, and $x_i=x_i(t)$, $i=1,2$, are real functions. It can be written as
 \begin{align}
\dot{x}_1 + b\, x_1 &=0,   \\
x_2 &=x_1^2+x_2^2+x_2-1
 \end{align}
and be represented in the form of the DAE $\frac{d}{dt}[Ax]+Bx=f(t,x)$, i.e., \eqref{DAE}, where
\begin{equation}\label{Ex3DAE}
x=\begin{pmatrix} x_1 \\  x_2 \end{pmatrix},\quad
A=\begin{pmatrix} 1 & 0 \\ 0 & 0 \end{pmatrix},\quad
B=\begin{pmatrix} b & 0 \\ 0 & 1 \end{pmatrix},\quad
f(t,x)=\begin{pmatrix} 0 \\ x_1^2+x_2^2+x_2-1 \end{pmatrix},
\end{equation}
$f\in C^\infty(\mT\times D,\R^2)$, $D=\R^2$, and $A$,~$B$ are such that $\lambda A+B$ is a regular pencil of index 1. Consider the initial condition $x_1(t_0)=x_{1,0}$, $x_2(t_0)=x_{2,0}$, where $t_0\in \mT$, which for the DAE takes the form \eqref{ini}, where $x_0=(x_{1,0},x_{2,0})^\T$. Obviously, $x_{1,0}$, $x_{2,0}$ must satisfy \eqref{Ex3AE}.

As for the examples discussed above, we first construct the projectors \eqref{ProjR} (where $n=m=2$) and the subspaces from the direct decompositions \eqref{rr} (where $X_r=Y_r=\R^2$). With respect to the standard bases in $\R^2$, the projection matrices of the form \eqref{Ex2Project}  correspond to the projectors $P_i\colon \R^2\to X_i$, $Q_i\colon \R^2\to Y_i$ ($i=1,2$) from \eqref{ProjR}. 
Therefore, the matrices $\Es A_1=\Es A_1^{(-1)}=\left(\begin{smallmatrix} 1 & 0 \\ 0 & 0 \end{smallmatrix}\right)$, $\Es B_1=\left(\begin{smallmatrix} b & 0 \\ 0 & 0 \end{smallmatrix}\right)$ and $\Es B_2=\left(\begin{smallmatrix} 0 & 0 \\ 0 & 1 \end{smallmatrix}\right)$ correspond to the operators, defined by \eqref{ABrrExtend} and \eqref{InvA1}, with respect to the standard bases in $\R^2$. The subspaces from \eqref{rr} have the form $X_1=Y_1=\lin\{(1,0)^\T\}$, $X_2=Y_2=\lin\{(0,1)^\T\}$. Denote by $\{e_1=(1,0)^\T,\, e_2=(0,1)^\T\}$ the basis of $\R^2 = X_1\dot +X_2$ and by $\{q_1=(1,0)^\T,\, q_2=(0,1)^\T\}$ the basis of $\R^2 =Y_1\dot +Y_2$. With respect to the direct decomposition $\R^2 = X_1\dot +X_2$, a vector $x\in \R^2$ can be uniquely represented in the form \eqref{xrr} where $x_{p_1}$, $x_{p_2}$ have the form \eqref{Ex2-xrr}, i.e.,
 $$
x_{p_1}=P_1 x=x_1(1,0)^\T,\quad x_{p_2}=P_2 x=x_2(0,1)^\T.
 $$
The subspaces $D_i$, $i=1,2$, from the decomposition \eqref{Drr}, where $D=\R^2$, have the form  \eqref{D_1D_2}.

Introduce the sets
 \begin{equation}\label{SetsGlobalSolvEx3}
M_1=\{x_{p_1}=(x_1,0)^\T\in X_1\mid x_1\in (-1,1)\},\;\;
M_2=\{x_{p_2}=(0,x_2)^\T\in X_2\mid x_2\in (0,1]\}.
 \end{equation}
Then $x_{p_1}\in M_1$ iff $x_1\in (-1,1)$ and $x_{p_2}\in M_2$ iff $x_2\in (0,1]$.
In addition, introduce the set
\begin{equation}\label{Set2GlobalSolvEx3}
\widehat{M_2}=\{x_{p_2}=(0,x_2)^\T\in X_2\mid x_2\in [-1,0)\}.
\end{equation}

The equation $Q_2[f(t,x)-Bx]=0$ defining $L_0$ is equivalent to  \eqref{Ex3AE}. Hence, the consistent initial values $t_0$, $x_0=(x_{1,0},x_{2,0})^\T$ (i.e., $(t_0,x_0)\in L_0$), for which $P_1x_0\in M_1$ and $P_2x_0\in M_2$ (respectively, $P_2x_0\in \widehat{M_2}$), are such that $x_{1,0}\in (-1,1)$, $x_{2,0}\in (0,1]$ (respectively, $x_{2,0}\in [-1,0)$\,) and $x_{1,0}^2+x_{2,0}^2 =1$.

 \smallskip
Further, we will verify  whether the conditions for the global solvability of the IVP \eqref{DAE}, \eqref{Ex3DAE}, \eqref{ini}, which are given in Section \ref{SectGlobReg}, are satisfied. 

Condition \ref{SoglReg} of Theorem \ref{Th_GlobReg-LipschObl} holds since for any fixed  ${t\in \mT}$, ${x_{p_1}\in M_1}$ (i.e., $x_1\in (-1,1)$\,) there exists a unique ${x_{p_2}\in M_2}$ (i.e., $x_2\in (0,1]$\,) such that ${(t,x_{p_1}+x_{p_2})\in L_0}$ (i.e., the equation \eqref{Ex3AE} is satisfied). Note that the same condition will hold if one replace the set $M_2$ by $\widehat{M_2}$.

Condition \ref{InvReg} of Theorem \ref{Th_GlobReg-Obl} is fulfilled since for any fixed $t_*\in \mT$ and any fixed $x_*=(x_1^*,x_2^*)^\T$ such that $x_1^*\in (-1,1)$ and  $x_2^*\in (0,1]$ there exists $\Phi_{t_*,x_*}^{-1}=(2x_2^*)^{-1}$, where $\Phi_{t_*,x_*}=2x_2^*$ is the matrix corresponding to the operator \eqref{funcPhiInvReg} with respect to the bases $e_2$ and $q_2$ in $X_2$ and $Y_2$, respectively. Note that the same condition holds if $x_2^*$ belongs to $[-1,0)$ instead of $(0,1]$.

Let us verify whether condition \ref{RegAttractCoroll} of Corollary \ref{Coroll-GlobReg1} is fulfilled (then condition \ref{RegAttractor} of Theorem \ref{Th_GlobReg-LipschObl} holds).
The function defined in \eqref{Pi} has the form\;
${\Pi(t,x_{p_1},x_{p_2})\equiv \Pi(x_{p_1})=(-b\,x_1,0)^\T=-b\, x_{p_1}}$\; 
and the equation \eqref{DAEsysExtReg1} takes the form ${\dot{x}_{p_1}=\Pi(x_{p_1})}$.  Since $\Pi$ does not depend on $x_{p_2}$, then ${\widetilde{D_1}= D_{\Pi,1}=X_1}$, where $\widetilde{D_1}$ and $D_{\Pi,1}$ are the sets specified in condition \ref{RegAttractCoroll} of Corollary \ref{Coroll-GlobReg1}.
Define the function \;${W(t,x_{p_1})\equiv W(x_{p_1}):=x_{p_1}^\T x_{p_1}=x_1^2}$\; and a family of closed sets
\;${K_r=\{x_{p_1}=(x_1,0)^\T\in M_1\mid x_1\in [-1+r,1-r]\,\}}$,\;  ${0<r<<1}$. 
Then  ${W(x_{p_1}^1)<W(x_{p_1}^2)}$ for every ${x_{p_1}^1\in K_r}$ and  ${x_{p_1}^2\in M_1^c\cap \widetilde{D_1}=M_1^c=\{x_{p_1}=(x_1,0)^\T\in X_1\mid x_1\in (-\infty,-1]\cup[1,+\infty)\,\}}$. In addition, if ${b\ge 0}$, then ${\dot{W}_\eqref{DAEsysExtReg1}(x_{p_1})=-2 b x_1^2\le 0}$ for every $x_{p_1}\in K_r^c\cap \widetilde{D_1}=K_r^c=\{x_{p_1}=(x_1,0)^\T\in X_1\mid x_1\in (-\infty,-1+r)\cup (1-r,+\infty)\,\}$. Hence, condition \ref{RegAttractCoroll} of Corollary \ref{Coroll-GlobReg1} holds (both for ${x_{p_2}\in M_2}$ and for ${x_{p_2}\in \widehat{M_2}}$\,)\, if ${b\ge 0}$.

Note that since the set $M_1$ is bounded, then condition \ref{ExtensReg} of \ref{Th_GlobReg-LipschObl} does not need to be checked.

 \smallskip
Thus, \emph{if $b\ge 0$, then by Theorem \ref{Th_GlobReg-LipschObl} (or Theorem \ref{Th_CritGlobReg}), where conditions \ref{InvReg-Lipsch} and \ref{RegAttractor} are replaced by conditions \ref{InvReg} of Theorem \ref{Th_GlobReg-Obl} and  \ref{RegAttractCoroll} of Corollary \ref{Coroll-GlobReg1} respectively, there exists a unique global solution of the IVP \eqref{DAE}, \eqref{Ex3DAE}, \eqref{ini} for each initial point $(t_0,x_0)$, where $x_0=(x_{1,0},x_{2,0})^\T$, for which $t_0\in [0,\infty)$, $x_{1,0}\in (-1,1)$ and $x_{2,0}\in (0,1]$} (i.e., $P_1x_0\in M_1$ and $P_2x_0\in M_2$ where $M_1$, $M_2$ are defined in \eqref{SetsGlobalSolvEx3}), \emph{and in addition, $x_{1,0}^2+x_{2,0}^2 =1$} (i.e., $(t_0,x_0)\in L_0$). \emph{The same statement is valid for $x_{2,0}$ belonging to $[-1,0)$} (i.e., for $P_2x_0\in \widehat{M_2}$ where $\widehat{M_2}$ is defined in \eqref{Set2GlobalSolvEx3}) \emph{instead of $(0,1]$}.

In addition, since the sets $M_1$, $M_2$ are bounded, then \emph{by Corollary \ref{Coroll-UstLagrReg} the DAE \eqref{DAE}, \eqref{Ex3DAE} is Lagrange stable for the initial points specified above} (see the definition of the Lagrange stability of the DAE in Section \ref{Preliminaries}).

 \smallskip
Indeed, \emph{a solution of the IVP \eqref{DAE}, \eqref{Ex3DAE}, \eqref{ini} has the form $x(t)=(x_1(t),x_2(t))^\T$ where $x_1(t)=x_{1,0}\, \ee^{-b(t-t_0)}$,\;   $x_2(t)=\sqrt{1-x_{1,0}^2\, \ee^{-2b(t-t_0)}}$ for the initial value $x_0=(x_{1,0},x_{2,0})^\T$ such that $x_{1,0}\in (-1,1)$ and $x_{2,0}\in (0,1]$, and $x_2(t)=-\sqrt{1-x_{1,0}^2\, \ee^{-2b(t-t_0)}}$ for the initial value $x_0$ such that $x_{1,0}\in (-1,1)$ and $x_{2,0}\in [-1,0)$. Obviously, the solution exists and is global if $b\ge 0$ and, in addition, it is bounded for all $t\ge t_0$, namely, $x_1(t)\in (-1,1)$ and $x_2(t)\in (0,1]$ or $x_2(t)\in [-1,0)$ (depending on the initial values) for all $t\ge t_0$. This coincides with the conclusions obtained using the theorems}.

 \subsection{Example 4 (a singular (nonregular) DAE)}\label{Example4}

Consider the system of differential and algebraic equations
 \begin{align}
\frac{d}{dt}(x_1-x_3)+x_1-x_2-x_3 &= f_1(t,x),  \label{genSing1}\\
x_1+x_2-x_3 &= f_2(t,x),  \label{genSing2}\\
2x_2 &=f_3(t,x). \label{genSing3}
 \end{align}
where the functions $f_i\in C(\mT\times D,\R)$, $i=1,2,3$, have the continuous partial derivatives $\partial_x f_i$ on $\mT\times D$, $\mT= [0,\infty)$, $D=\R^3$, and $x_i=x_i(t)$, $i=1,2,3$, are real functions.  The system \eqref{genSing1}--\eqref{genSing3} can be represented in the form of the DAE $\frac{d}{dt}[Ax]+Bx=f(t,x)$, i.e., \eqref{DAE}, where
 \begin{equation}\label{pencilGener}
x=\begin{pmatrix} x_1 \\ x_2 \\ x_3 \end{pmatrix},\;
A=\begin{pmatrix} 1 & 0 & -1 \\
                  0 & 0 & 0 \\
                  0 & 0 & 0 \end{pmatrix},\;\;
B=\begin{pmatrix} 1 & -1 & -1 \\
                  1 & 1  & -1 \\
                  0 & 2 & 0 \end{pmatrix},\;\;
f(t,x)=\begin{pmatrix} f_1(t,x) \\ f_2(t,x) \\ f_3(t,x) \end{pmatrix}.
 \end{equation} 
It can be readily verified that the characteristic pencil $\lambda A+B$ is singular and ${\rank(\lambda A+B)=2}$. The total defect of the pencil equals $d(\lambda A+B)=2$ and the defects of $\lambda A+B$ and $\lambda A^\T+B^\T$ equal $1$ (see definition \ref{DefectPencil} in Appendix \ref{Appendix}).
The initial condition is given as $x(t_0)=x_0$, i.e., \eqref{ini}. Obviously, the initial values $t_0$, $x_0=(x_{1,0},x_{2,0},x_{3,0})^\T$ must satisfy the equations \eqref{genSing2}, \eqref{genSing3}.

The global solvability of the DAE \eqref{DAE}, \eqref{pencilGener} (accordingly, the system \eqref{genSing1}--\eqref{genSing3}) has been studied in \cite[Section~10]{Fil.Sing-GN} by using the theorems \cite[Theorems 3.2 and 7.1]{Fil.Sing-GN}. The same conditions as when applying the theorem \cite[Theorem 3.2]{Fil.Sing-GN} will be obtained if we apply Theorem \ref{Th_GlobSing-LipschObl}, where condition \ref{InvSing-Lipsch} is replaced by condition \ref{InvSing} of Theorem \ref{Th_GlobSing-Obl} and $D=\R^3$, and take $M_{s1}=X_{s_1}\dot+X_1$ and $M_2=X_2$.

 \smallskip
In this section, we will consider a certain particular case of the system \eqref{genSing1}--\eqref{genSing3} in which a solution is blow-up.

The projection matrices corresponding to the projectors (see \eqref{ProjRS}, \eqref{ProjS} and \eqref{ProjR} in Appendix \ref{Appendix}) onto the  ``singular'' subspaces, i.e., $S_i\colon \R^3\to X_{s_i}$, $F_i\colon\R^3\to Y_{s_i}$, $i=1,2$, $S=S_1+S_2\colon\R^3\to X_s$ and $F=F_1+F_2\colon \R^3\to Y_s$, and onto the ``regular'' subspaces, i.e., $P_i\colon \R^3\to X_i$, $Q_i\colon\R^3\to Y_i$, $i=1,2$, $P=P_1+P_2\colon \R^3\to X_r$ and $Q=Q_1+Q_2\colon \R^3\to Y_r$,   are presented in \cite[Section~10]{Fil.Sing-GN}. Also, in \cite[Section~10]{Fil.Sing-GN}, the matrices
correspond to the operators $\EuScript A_r$, $\EuScript B_r$, $\EuScript A_{gen}$, $\EuScript B_{gen}$, $\EuScript B_{und}$, $\EuScript B_{ov}$, $\EuScript A_{gen}^{(-1)}$ defined by \eqref{AsrBsrExtend}, \eqref{ABssExtend}, \eqref{InvAgen} are given.
The subspaces from the decompositions \eqref{Xssrr} and \eqref{Yssrr}, where $n=3$ and $m=3$, have the following form (see \cite[Section~10]{Fil.Sing-GN}):
\begin{gather*}
X_s = X_{s_1}\dot +X_{s_2}=\lin\{s_i\}_{i=1}^2,\; {X_{s_1}=\lin\{s_1\}},\; X_{s_2}=\lin\{s_2\},\; X_r=X_2= \lin\{p\},\; X_1=\{0\},  \\ 
Y_s = Y_{s_1}\dot +  Y_{s_2}= \lin\{l_i\}_{i=1}^2,\; {Y_{s_1} = \lin\{l_1\}},\; Y_{s_2} = \lin\{l_2\},\; Y_r=Y_2= \lin\{q\},\; Y_1=\{0\}, 
\end{gather*}
 \begin{equation}\label{BasisVectors}
s_1 =\begin{pmatrix} 1 \\ 0 \\ 0 \end{pmatrix},\,
s_2 =\begin{pmatrix} 1 \\ 0 \\ 1 \end{pmatrix},\,
p =\begin{pmatrix} 0 \\ 1 \\ 0 \end{pmatrix},\;
l_1 =\begin{pmatrix} 1 \\ 0 \\ 0 \end{pmatrix},\,
l_2 =\begin{pmatrix} 0 \\ 1 \\ 0 \end{pmatrix},\;
q =\begin{pmatrix} -1/2 \\ 1/2 \\ 1 \end{pmatrix}.
 \end{equation}
Since $D=\R^3$, then $D_{s_i}= X_{s_i}$ and $D_i= X_i$, $i=1,2$, in the decomposition \eqref{Dssrr}.

The components of $x\in \R^3$ represented as $x=x_{s_1}+x_{s_2}+x_{p_1}+x_{p_2}$ (see \eqref{xsr}) have the form
\begin{equation*}
x_{s_1} = S_1 x=(x_1-x_3)(1,0,0)^\T,\; x_{s_2}=S_2x=x_3(1,0,1)^\T,\;
x_{p_1}=P_1 x=0,\; x_{p_2}=P_2 x=x_2(0,1,0)^\T.
\end{equation*}
(notice that $x_1-x_3$, $x_3$, $x_2$ are the coordinates of the vector $x=(x_1,x_2,x_3)^\T$ with respect to the basis $s_1$, $s_2$, $p$ in $\R^3$, i.e., $x=(x_1-x_3)s_1+x_3 s_2+ x_2 p$, where $s_1$, $s_2$, $p$ are defined in \eqref{BasisVectors}).
If we make the change of variables
\begin{equation}\label{NewVariable}
x_1-x_3=w,\quad x_3=\xi,\quad x_2=u,
\end{equation}
then
 $$
x_{s_1}=w\, (1,0,0)^\T,\quad x_{s_2}=\xi\, (1,0,1)^\T,\quad x_{p_1}=0,\quad x_{p_2}=u\, (0,1,0)^\T,
 $$
and the system \eqref{genSing1}--\eqref{genSing3} can be represented in the form (the same form as in \cite[(10.13)--(10.15)]{Fil.Sing-GN})
 \begin{align}
\dot{w} &=-w+ \widetilde{f}_1(t,w,\xi,u)+ 0.5\widetilde{f}_3(t,w,\xi,u), \label{genSing1New}\\
u &=0.5\widetilde{f}_3(t,w,\xi,u),  \label{genSing3New}\\
w &= \widetilde{f}_2(t,w,\xi,u)-0.5\widetilde{f}_3(t,w,\xi,u), \label{genSing2New}
 \end{align}
where
 \begin{equation}\label{tilde_f}
\widetilde{f}(t,w,\xi,u):=f(t,w+\xi,u,\xi)=f(t,x_1,x_2,x_3)=f(t,x).
 \end{equation}
Note that the system \eqref{genSing1New}--\eqref{genSing2New} is equivalent to the system \eqref{DAEsysExtDE1}--\eqref{DAEsysExtAE2} where the equation \eqref{DAEsysExtDE2} disappears since $Q_1=0$ and $P_1=0$.
The equations $Q_2[f(t,x)-Bx]=0$, $F_2[Bx-f(t,x)]=0$ (as well as  \eqref{DAEsysExtAE1}, \eqref{DAEsysExtAE2}) defining the manifold $L_0$ (i.e., \eqref{L_tSing} where $t_*=0$) are equivalent to \eqref{genSing2}, \eqref{genSing3} (or \eqref{genSing2New}, \eqref{genSing3New}).

 \smallskip
\emph{Consider the particular case of the system \eqref{genSing1}--\eqref{genSing3} when}
 \begin{equation}\label{genSing-f}
\begin{split}
& f_1(t,x)=(x_1-x_3-1)^3+x_1-x_3+x_2,\\
& f_2(t,x)=x_1-x_3+x_2,\\
& f_3(t,x)=-(x_2^3+3x_2^2+x_2+1)-(t+1)x_3^2.
\end{split}
 \end{equation}
For the new variables \eqref{NewVariable},  $\widetilde{f}_i(t,w,\xi,u):=f_i(t,w+\xi,u,\xi)$, $i=1,2,3$, (see \eqref{tilde_f})  take the form
\begin{equation}\label{genSing-tilde_f}
\widetilde{f}_1(t,w,\xi,u)\!=\!(w-1)^3+w+u,\;
\widetilde{f}_2(t,w,\xi,u)\!=\!w+u,\;
\widetilde{f}_3(t,w,\xi,u)\!=\!-(u^3+3u^2+u+1)-(t+1)\xi^2.
\end{equation}
Then the equations \eqref{genSing1New} and \eqref{genSing3New} take the form
\begin{align}
\dot{w} &=(w-1)^3, \label{genSing1NewCase}\\
u &=-0.5\big[(u^3+3u^2+u+1)-(t+1)\xi^2\big],  \label{genSing3NewCase}
 \end{align}
and the equation \eqref{genSing2New} becomes the identity ($w=w$) when we substitute \eqref{genSing3New} into it.

From \eqref{genSing3NewCase} we obtain $u=-(t+1)^{1/3}\xi^{2/3}-1$. Therefore, for any fixed  $t\in \mT$, $w, \xi\in\R$ there exists a unique $u\in\R$ such that \eqref{genSing3NewCase} is fulfilled and, hence, \eqref{genSing2New}, \eqref{genSing3New} with $\widetilde{f}_i$, $i=1,2,3$, of the form \eqref{genSing-tilde_f} are satisfied. Consequently, condition \ref{SoglSing} of Theorem \ref{Th_GlobSing-LipschObl} holds for any sets $M_{s1}\subseteq X_{s_1}$, $M_{s_2}\subseteq X_{s_2}$ and $M_2\subseteq X_2$. Note that $M_{s1}\subseteq X_{s_1}$ since $X_1=\{0\}$.

For $f_i$ (or $\widetilde{f}_i$), $i=1,2,3$, of the form \eqref{genSing-f}, the matrix corresponding to the operator \eqref{funcPhiInvSing}, where $t_*$ and $x_*=(x_1^*,x_2^*,x_3^*)^\T=(w_*+\xi_*,u_*,\xi_*)^\T$ are fixed,  has the form $\Phi_{t_*,x_*}=\partial_{x_2}f_3(t_*,x_*)-2=-3(x_2^*+1)^2$ with respect to the basis $p$ of $X_2$ and the basis $q$ of $Y_2$. Hence, the operator $\Phi_{t_*,x_*}$ has the inverse $\Phi_{t_*,x_*}^{-1}\in \mathrm{L}(Y_2,X_2)$ if $x_2^*\ne -1$ or $u_*\ne -1$ (note that $\partial_{x_2}f_3(t,x)=\partial_u \widetilde{f}_3(t,w,\xi,u)$). Consequently, condition \ref{InvSing} of Theorem \ref{Th_GlobSing-Obl} holds if $x_2^*=u_*\ne -1$.

Choose
 \begin{equation}\label{SetsGlobalSolvEx4}
\begin{split}
& M_{s1}=\{x_{s_1}=(x_1-x_3)(1,0,0)^\T\mid x_1-x_3\in (1,+\infty)\},\\
& M_{s_2}=\{x_{s_2}=x_3(1,0,1)^\T\mid x_3\in \R\setminus\{0\}\},\quad
  M_2=\{x_{p_2}=x_2(0,1,0)^\T\mid x_2\in \R\setminus\{-1\})\},
\end{split}
 \end{equation}
where $x_1-x_3=w$, $x_3=\xi$ and $x_2=u$ if the new variables \eqref{NewVariable} are used.

Let us prove that the component $x_{s_1}(t)=S_1x(t)$ (recall that $x_{p_1}=0$) of each solution $x(t)$ of the DAE \eqref{DAE}, \eqref{pencilGener}, \eqref{genSing-f} with the initial point $(t_0,x_0)\in L_0$, for which $S_1x_0\in M_{s1}$, $S_2x_0\in M_{s_2}$ and $P_2x_0\in M_2$, can never leave $M_{s1}$. Notice that any $t_0\in [0,\infty)$, $x_0=(x_{1,0},x_{2,0},x_{3,0})^\T=(w_0+\xi_0,u_0,\xi_0)^\T$ such that $w_0\in (1,+\infty)$, $u_0\in \R\setminus\{-1\}$, $\xi_0\in \R\setminus\{0\}$ and $u_0=-(t_0+1)^{1/3}\xi_0^{2/3}-1$ are consistent initial values for which $S_1x_0\in M_{s1}$, $S_2x_0\in M_{s_2}$ and $P_2x_0\in M_2$.
Since the DAE \eqref{DAE}, \eqref{pencilGener}, \eqref{genSing-f} is equivalent to the system \eqref{genSing1NewCase}, \eqref{genSing3NewCase} and $x_{s_1}=w(1,0,0)^\T\in M_{s1}$ if and only if $w\in (1,+\infty)$, then we need to prove that a solution $w=w(t)$ of \eqref{genSing1NewCase} with the initial values $t_0\in [0,\infty)$, $w(t_0)=w_0\in (1,+\infty)$ can never leave the set $\widetilde{M}_{s1}:=\{w\in (1,+\infty)\}$. Indeed, since $\dot{w}=0$ along the boundary $\partial \widetilde{M}_{s1}=\{w=1\}$ of $\widetilde{M}_{s1}$ and $\dot{w}>0$ inside $\widetilde{M}_{s1}$, then each solution of \eqref{genSing1NewCase} which at the initial moment $t_0$ is in $\widetilde{M}_{s1}$ can never thereafter leave it. Hence, condition \ref{SingAttractor} of Theorem \ref{Th_GlobSing-LipschObl} holds.

Define the function $V(t,x_{s_1},x_{p_1})\equiv V(x_{s_1}):=(w-1,0,0)(w-1,0,0)^\T=(w-1)^2$. Obviously, the function $V$ is positive for  $x_{s_1}=(w,0,0)^\T\in M_{s1}$, i.e., $w\in (1,+\infty)$. Note that  the equation \eqref{DAEsysExtDE2} disappears and the equation \eqref{DAEsysExtDE1} is equivalent to \eqref{genSing1New}. Namely, \eqref{DAEsysExtDE1} takes the form $\dot{x}_{s_1}=\big(\dot{w},0,0\big)^\T=\big(-w+ \widetilde{f}_1(t,w,\xi,u)+ 0.5\widetilde{f}_3(t,w,\xi,u),0,0\big)^\T$, where $\widetilde{f}_i$, $i=1,2,3$, have the from \eqref{genSing-tilde_f}, and it is equivalent to \eqref{genSing1NewCase} for $(t,w,\xi,u)$ satisfying \eqref{genSing1New}, \eqref{genSing3New}. Hence, $\dot{V}_{\eqref{DAEsysExtDE1},\eqref{DAEsysExtDE2}}(x_{s_1})= \dot{V}_{\eqref{DAEsysExtDE1}}(x_{s_1})=2(w-1)^4=2 V^2(x_{s_1})$ for each $t\in \mT$, $x_{s_1}\in M_{s1}$, $x_{s_2}\in M_{s_2}$, $x_{p_2}\in M_2$  such that $(t,x_{s_1}+x_{s_2}+x_{p_2})\in L_0$. Therefore, the inequality \eqref{L2v} takes the form $\dot{v}=2v^2$, and it is easy to verify that this inequality does not have global positive solutions. Hence, condition \ref{NeUstLagrSing} of Theorem \ref{Th_SingNeLagr} is fulfilled.

Thus, \emph{by Theorem \ref{Th_SingNeLagr} (or Theorem \ref{Th_CritGlobSing} and Corollary \ref{Coroll_CritGlobSing}), for each initial point $(t_0,x_0)\in [0,\infty)\times \R^3$, where $x_0=(x_{1,0},x_{2,0},x_{3,0})^\T$, such that $x_{1,0}-x_{3,0}>1$} (i.e., $S_1x_0\in M_{s1}$), $x_{2,0}\ne -1$ (i.e.,  $P_2x_0\in M_2$), $x_{3,0}\ne 0$ (i.e., $S_2x_0\in M_{s_2}$), \emph{and $x_{2,0}=-(t_0+1)^{1/3}x_{3,0}^{2/3}-1$} (i.e., $(t_0,x_0)\in L_0$), \emph{the IVP \eqref{DAE}, \eqref{ini}, where $A$, $B$ and $f(t,x)$ are of the form \eqref{pencilGener}, \eqref{genSing-f},  has a unique solution $x(t)$ with the component $S_2x(t)=\varphi_{s_2}(t)=x_3(t)\, (1,0,1)^\T$, where $\varphi_{s_2}\in C([t_0,\infty),M_{s_2})$} (accordingly, $x_3\in C([t_0,\infty),\R\setminus\{0\})$\,)  \emph{is some function with the initial value  $\varphi_{s_2}(t_0)=x_{3,0}$, and this solution has a finite escape time (is blow-up in finite time).}

 \smallskip
Indeed, \emph{for each initial values $t_0\in [0,\infty)$, $x_0=(x_{1,0},x_{2,0},x_{3,0})^\T\in \R^3$ such that $x_{1,0}-x_{3,0}>1$, $x_{2,0}\ne -1$, $x_{3,0}\ne 0$ and\; $x_{2,0}=-(t_0+1)^{1/3}x_{3,0}^{2/3}-1$,  the IVP \eqref{DAE}, \eqref{pencilGener}, \eqref{genSing-f}, \eqref{ini} has the solution $x(t)=(x_1(t),x_2(t),x_3(t))^\T$ where\; $x_1(t)=\dfrac{1}{\sqrt{-2(t-t_0)+(x_{1,0}-x_{3,0}-1)^{-2}}}+1+x_3(t)$,\;    $x_2(t)=-(t+1)^{\frac{1}{3}}x_3^{\frac{2}{3}}(t)-1$\;
and\; $x_3\in C([t_0,\infty),\R\setminus\{0\})$ is some function such that\; $x_3(t_0)=x_{3,0}$.} For example, take $t_0=0$, $x_{3,0}=1$ and $x_3(t)=t+1$, then the initial values $t_0=0$, $x_0=(x_{1,0},-2,1)^\T$, where $x_{1,0}>2$, satisfy all the conditions specified above. Further, take $x_{1,0}=3$, then the components of the solution $x(t)$ have the form\;
${x_1(t)=\dfrac{1}{\sqrt{-2t+1}}+t+2}$,\; ${x_2(t)=-(t+2)}$,\; ${x_3(t)=t+1}$.\;
Obviously, it exists on $[0,\frac{1}{2})$ and $\|x(t)\|\to +\infty$ as $t\to \frac{1}{2}$; hence, it has the finite escape time $0.5$. \emph{In general, the solution of the IVP \eqref{DAE}, \eqref{pencilGener}, \eqref{genSing-f}, \eqref{ini} has the finite escape time, namely, it exists on $[t_0,T)$, where $T=t_0+0.5(x_{1,0}-x_{3,0}-1)^{-2}$, and $\lim\limits_{t\to T-0} \|x(t)\|=+\infty$.}

\appendix

 \section{Appendix. The block form of a singular pencil of operators and the method for constructing the associated direct decompositions of spaces and projectors}\label{Appendix}

Below, the results from \cite{Fil.UMJ,Fil.KNU2019} are given. The detailed description and proof of these results can be found in \cite{Fil.KNU2019}.

Let $A$, $B$ be linear operators mapping $\Rn$ into $\Rm$; by $A$, $B$ we also denote $m\times n$-matrices corresponding to the operators $A$, $B$  (with respect to some bases in $\Rn$, $\Rm$).  Consider the operator pencil $\lambda A+B$, where $\lambda$ is a complex parameter. Note that instead of the real operators $A,\, B$ we can consider the complex operators  $A,\, B\colon \Cn \to \Cm$, for which Statement~\ref{STABssr} (see below), as well as Remark \ref{Rem-RegPenc}, remains true, but when constructing direct decompositions of the form $\eqref{ssr}$ for the complex spaces $\Cn$ and $\Cm$ and the associated projectors (the method of construction is given below), it is necessary to replace transposition by Hermitian conjugation everywhere.

Recall the following classic definition:  A linear space $L$ is decomposed into the \emph{direct sum} $L =L_1\dot + L_2$ of the subspaces $L_1\subseteq L$ and $L_2\subseteq L$ if $L_1\cap L_2 =\{0\}$ and $L_1+L_2=\{x_1+x_2 \mid x_1\in L_1, x_2\in L_2\}=L$, or, equivalently, if every $x\in L$ can be uniquely represented in the form $x=x_1+x_2$ where $x_i\in L_i$, $i=1,2$ (see, e.g., \cite[p.~309]{Faddeev}). The representation $L=L_1\dot + L_2$ is called a \emph{direct decomposition} of the space $L$.

Since the direct product (Cartesian product)  $L_1\times L_2$ is the direct sum of the spaces $L_1\times \{0\}$ and $\{0\}\times L_2$, where $0$ from $L_2$ and $L_1$ respectively, then it can be identified with the direct sum $L_1\dot + L_2$ by identifying $L_1\times \{0\}$ with $L_1$ and $\{0\}\times L_2$ with $L_2$.

\emph{Thus, when indicating the block structures of operators which are presented below, we identify direct sums and the corresponding direct products of subspaces for convenience of notation.}

 \begin{statement}[see \cite{Fil.UMJ,Fil.KNU2019}]\label{STABssr}
For operators $A,\, B\colon\Rn\to\Rm$, which form a singular pencil $\lambda A+B$ \;\textup{(}$\lambda\in \mC$ is a parameter\textup{)}, there exist the decompositions of the spaces $\Rn$, $\Rm$ into the direct sums of subspaces
 \begin{equation}\label{ssr}
\Rn=X_s\dot +X_r=X_{s_1}\dot+X_{s_2}\dot+X_r,\quad  \Rm=Y_s\dot+Y_r=Y_{s_1}\dot+Y_{s_2}\dot+Y_r
 \end{equation} 
such that with respect to the decompositions $\Rn=X_s\dot +X_r$, $\Rm=Y_s\dot+Y_r$ the operators $A$, $B$ have the block structure
 \begin{equation}\label{srAB}
A=\begin{pmatrix} 
   A_s & 0   \\
   0   & A_r \end{pmatrix},\;
B=\begin{pmatrix} 
   B_s & 0   \\
   0   & B_r \end{pmatrix}\colon X_s\dot +X_r\to Y_s\dot +Y_r\quad  (X_s\times X_r\to Y_s\times Y_r),
 \end{equation}
where $A_s=A\big|_{X_s},\, B_s=B\big|_{X_s}\colon X_s\to Y_s$ and $A_r=A\big|_{X_r},\, B_r=B\big|_{X_r}\colon X_r\to Y_r$, that is, the pair of ``singular'' subspaces $\{X_s,Y_s\}$ and the pair of ``regular'' subspaces  $\{X_r,Y_r\}$  are invariant under the operators  $A$, $B$  \textup{(}i.e., $AX_s \subseteq Y_s$, $BX_s \subseteq Y_s$ and $AX_r \subseteq Y_r$, $BX_r \subseteq Y_r$\textup{)}, and with respect to the decompositions
\begin{equation}\label{sGener}
X_s = X_{s_1}\dot +  X_{s_2},\quad Y_s = Y_{s_1}\dot +  Y_{s_2}
\end{equation}
the ``singular'' blocks $A_s$, $B_s$ have the block structure
 \begin{equation}\label{sab}
A_s = \begin{pmatrix} A_{gen} & 0 \\ 0 & 0
     \end{pmatrix},\;
B_s = \begin{pmatrix} B_{gen} & B_{und} \\ B_{ov} & 0
     \end{pmatrix}\colon X_{s_1}\dot + X_{s_2} \to Y_{s_1}\dot + Y_{s_2}\quad (X_{s_1}\times X_{s_2} \to Y_{s_1}\times Y_{s_2}),
 \end{equation}
where the operator ${A_{gen}\colon X_{s_1}\to Y_{s_1}}$ has the inverse ${A_{gen}^{-1}\in \mathrm{L}(Y_{s_1},X_{s_1})}$ \textup{(}if ${X_{s_1}\ne \{0\}}$\textup{)}, ${B_{gen}\colon X_{s_1}\to Y_{s_1}}$, ${B_{und}\colon X_{s_2}\to Y_{s_1}}$ and ${B_{ov}\colon X_{s_1}\to Y_{s_2}}$.\;
If ${\rank(\lambda A+B)=m<n}$, then the structure of the singular blocks takes the form
  \begin{equation}\label{sab1}
A_s = \begin{pmatrix} A_{gen} & 0 \end{pmatrix},\;
B_s = \begin{pmatrix} B_{gen} & B_{und} \end{pmatrix}\colon X_{s_1}\dot +  X_{s_2}\to Y_s\quad (X_{s_1}\times  X_{s_2}\to Y_s)
 \end{equation}
and\; $Y_{s_1}=Y_s$, $Y_{s_2}= \{0\}$\; in \eqref{ssr} and, accordingly, in \eqref{sGener}.\; If ${\rank(\lambda A+B)=n<m}$, then the structure of the singular blocks takes the form
 \begin{equation}\label{sab2}
A_s = \begin{pmatrix} A_{gen} \\ 0
  \end{pmatrix},\;
B_s = \begin{pmatrix} B_{gen} \\ B_{ov}
 \end{pmatrix}\colon X_s\to Y_{s_1}\dot+ Y_{s_2}\quad (X_s\to Y_{s_1}\times Y_{s_2})
\end{equation}
and $X_{s_1}=X_s$, $X_{s_2}=\{0\}$ in  \eqref{ssr} and, accordingly, in \eqref{sGener}.

The direct decompositions \eqref{ssr} generate the pair $S$, $P$, the pair $F$, $Q$, the pair $S_1$, $S_2$ and the pair $F_1$, $F_2$  of the mutually complementary projectors
\begin{align}
& S \colon \Rn \to X_s,\; P \colon \Rn\to X_r,
&F \colon \Rm \to Y_s,\; Q \colon \Rm \to Y_r, \label{ProjRS}  \\
& S_i\colon \Rn \to X_{s_i},
&F_i\colon \Rm \to Y_{s_i},\quad i=1, 2, \label{ProjS}
\end{align}
\textup{(}i.e., ${S+P=I_{\Rn}}$, ${S^2=S}$, $P^2=P$, $SP=PS=0$;\,  $F+Q=I_{\Rm}$, $F^2=F$, $Q^2=Q$, $FQ=QF=0$;\,  $S_1+S_2=S$, $S_iS_j=\delta_{ij}S_i$;\,  $F_1+F_2=F$, $F_iF_j=\delta_{ij}F_i$\,\textup{)}\,
where \,$F_1=F$, $F_2=0$\, if $\rank(\lambda A+B)=m<n$,\, and \,$S_1=S$, $S_2=0$\, if $\rank(\lambda A+B)=n<m$. These projectors have the properties
\begin{align}
& FA=AS,\quad FB=BS,\qquad  QA =AP,\quad QB=BP, \label{ProjRSInvar} \\
& {A S_2 = 0},\quad {F_2 A = 0},\quad {F_2 B S_2=0}. \label{properProjS}
\end{align}

The converse assertion that there exist the pairs of mutually complementary projectors \eqref{ProjRS}, \eqref{ProjS} satisfying \eqref{ProjRSInvar}, \eqref{properProjS}, which generate the direct decompositions \eqref{ssr}, is also true.
 \end{statement}
 \begin{remark}
The spaces $\Rn$, $\Rm$ have the direct decompositions \eqref{ssr} and, accordingly, the singular subspaces $X_s$, $Y_s$ have the direct decompositions \eqref{sGener} in the general case when $\rank(\lambda A+B) < n$ and $\rank(\lambda A+B) < m$. Also, notice that $F_1 A=AS_1=FA$.
 \end{remark}

The extensions of the operators from the block representations \eqref{srAB}, \eqref{sab}, \eqref{sab1}, \eqref{sab2} to $\Rn$ and the corresponding semi-inverse operators, which have been introduced in \cite{Fil.KNU2019}, are used in this paper and described below.

First, introduce the following extensions of the operators $A_s$, $A_r$, $B_s$, $B_r$ from \eqref{srAB} to $\Rn$:
\begin{equation}\label{AsrBsrExtend}
 \EuScript A_s = F A,\quad \EuScript A_r= Q A,\quad \EuScript B_s= F B,\quad \EuScript B_r= Q B.
\end{equation}
Then the operators  $\EuScript A_s,\, \EuScript B_s,\, \EuScript A_r,\, \EuScript B_r\in \mathrm{L}(\Rn,\Rm)$ act so that ${\EuScript A_s,\, \EuScript B_s\colon \Rn\to Y_s}$,  ${\EuScript A_r,\, \EuScript B_r\colon \Rn\to Y_r}$,\; ${X_r\subset \Ker(\EuScript A_s)}$, ${X_r\subset \Ker(\EuScript B_s)}$,\; $X_s\subset \Ker(\EuScript A_r)$, ${X_s\subset \Ker(\EuScript B_r)}$, and
\begin{equation}\label{AsrBsr}
\EuScript A_s\big|_{X_s}=A_s,\quad \EuScript A_r\big|_{X_r}=A_r,\quad \EuScript B_s\big|_{X_s}=B_s,\quad \EuScript B_r\big|_{X_r}=B_r.
\end{equation}

Further, introduce extensions of the operators (blocks) from \eqref{sab} to $\Rn$ as follows:
 \begin{equation}\label{ABssExtend}
\EuScript A_{gen}= F_1 A,\quad \EuScript B_{gen}=F_1 B S_1,\quad
\EuScript B_{und}=F_1 B S_2,\quad \EuScript B_{ov}=F_2 B S_1.
 \end{equation}  
Then $\EuScript A_{gen},\, \EuScript B_{gen},\, \EuScript B_{und},\, \EuScript B_{ov}\in \mathrm{L}(\Rn,\Rm)$ act so that $\EuScript A_{gen}\Rn = \EuScript A_{gen}X_{s_1}= Y_{s_1}$ ($X_{s_2}\dot +X_r= \Ker(\EuScript A_{gen})$),\, ${\EuScript B_{gen}\colon \Rn\to Y_{s_1}}$, $X_{s_2}\dot +X_r\subset \Ker(\EuScript B_{gen})$,\, ${\EuScript B_{und}\colon \Rn\to Y_{s_1}}$, $X_{s_1}\dot + X_r\subset \Ker(\EuScript B_{und})$,\, and ${\EuScript B_{ov}\colon \Rn\to Y_{s_2}}$, $X_{s_2}\dot +X_r\subset \Ker(\EuScript B_{ov})$, and
\begin{equation}\label{ABss}
 \EuScript A_{gen}\big|_{X_{s_1}}=A_{gen},\;\;
 \EuScript B_{gen}\big|_{X_{s_1}}=B_{gen},\;\;
 \EuScript B_{und}\big|_{X_{s_2}}=B_{und},\;\;
 \EuScript B_{ov}\big|_{X_{s_1}}=B_{ov}.
\end{equation}

Extensions of the operators (blocks) from \eqref{sab1} to $\Rn$ are introduced as follows:
\begin{equation}\label{ABsab1Extend} 
\EuScript A_{gen}=AS_1,\quad \EuScript B_{gen}=BS_1,\quad \EuScript B_{und}=BS_2.
\end{equation} 
Then the operators ${\EuScript A_{gen},\, \EuScript B_{gen},\, \EuScript B_{und}\in \mathrm{L}(\Rn,\Rm)}$ act so that ${\EuScript A_{gen}\Rn =\EuScript A_{gen}X_{s_1}=Y_s}$ ($X_{s_2}\dot + X_r= \Ker(\EuScript A_{gen})$),\, $\EuScript B_{gen}\colon \Rn\to Y_s$, $X_{s_2}\dot + X_r\subset \Ker(\EuScript B_{gen})$, and ${\EuScript B_{und}\colon \Rn\to Y_s}$,\, $X_{s_1}\dot +X_r\subset \Ker(\EuScript B_{und})$, and
\begin{equation}\label{ABsab1}
\EuScript A_{gen}\big|_{X_{s_1}}=A_{gen},\quad \EuScript B_{gen}\big|_{X_{s_1}}=B_{gen},\quad \EuScript B_{und}\big|_{X_{s_2}}=B_{und}.
\end{equation}

Extensions of the operators (blocks) from \eqref{sab2} to $\Rn$ are introduced as follows:
\begin{equation}\label{ABsab2Extend}
\EuScript A_{gen}=F_1A,\quad \EuScript B_{gen}=F_1B,\quad \EuScript B_{ov}=F_2B.
\end{equation}
Then the operators  $\EuScript A_{gen}, \EuScript B_{gen}, \EuScript B_{ov}\!\in\! \mathrm{L}(\Rn,\Rm)$ act so that $\EuScript A_{gen}\Rn \!=\!\EuScript A_{gen}X_s\!=\! Y_{s_1}$ ($X_r\!=\! \Ker(\EuScript A_{gen})$), $\EuScript B_{gen}\colon \Rn\to Y_{s_1}$, $X_r\!\subset\! \Ker(\EuScript B_{gen})$, and $\EuScript B_{ov}\colon \Rn\to Y_{s_2}$, $X_r\!\subset\! \Ker(\EuScript B_{ov})$, and
\begin{equation}\label{ABsab2}
\EuScript A_{gen}\big|_{X_s}=A_{gen},\quad \EuScript B_{gen}\big|_{X_s}=B_{gen},\quad \EuScript B_{ov}\big|_{X_s}=B_{ov}.
\end{equation}

 \begin{remark}[{\cite{Fil.KNU2019}}]\label{Rem_InvAgen}
The operator $\EuScript A_{gen}^{(-1)}\in \mathrm{L}(\Rm,\Rn)$ defined by the relations
\begin{equation}\label{InvAgen}
\EuScript A_{gen}^{(-1)} \EuScript A_{gen}=S_1,\quad  \EuScript A_{gen}\, \EuScript A_{gen}^{(-1)}=F_1,\quad \EuScript A_{gen}^{(-1)}=S_1 \EuScript A_{gen}^{(-1)},
\end{equation}
where ${F_1 = F}$ if ${\rank(\lambda A+B)= m<n}$ and ${S_1=S}$ if ${\rank(\lambda A+B)= n<m}$, is the \mbox{\emph{semi-inverse}} operator of $\EuScript A_{gen}$ \,(see the definition, e.g., in \cite{Faddeev}), i.e., $\EuScript A_{gen}^{(-1)}\Rm =\EuScript A_{gen}^{(-1)}Y_{s_1}=X_{s_1}$ \,($Y_{s_2}\dot +Y_r= \Ker(\EuScript A_{gen}^{(-1)})$) and $A_{gen}^{-1}= \EuScript A_{gen}^{(-1)}\big|_{Y_{s_1}}$.
The operator $\EuScript A_{gen}^{(-1)}\in \mathrm{L}(\Rm,\Rn)$ is the extension of the operator $A_{gen}^{-1}$ to $\Rm$.

Note that $\EuScript A_{gen}^{(-1)} F_1=S_1 \EuScript A_{gen}^{(-1)}=\EuScript A_{gen}^{(-1)}$ (where $F_1 = F$ if $\rank(\lambda A+B)= m<n$ and $S_1=S$ if $\rank(\lambda A+B)= n<m$).
 \end{remark}

With respect to the direct decompositions $\Rn=X_s\dot +X_r$, $\Rm=Y_s\dot+Y_r$, defined in \eqref{ssr}, the singular pencil $\lambda A+B$ of the operators $A, B\colon \Rn \to \Rm$ takes the block form
 \begin{equation}\label{penc}
\lambda A+B =  \begin{pmatrix}
 \lambda A_s+B_s & 0 \\
 0 &  \lambda A_r+B_r \end{pmatrix},\quad
  A_s,\, B_s\colon X_s\to Y_s,\quad A_r,\, B_r\colon X_r\to Y_r,
 \end{equation}
where the regular block $\lambda A_r+ B_r$ is a regular pencil and the singular block $\lambda A_s+ B_s$ is a purely singular pencil, i.e., it is impossible to separate out a regular block in this pencil. The pencil $\lambda A+B=\lambda A_s+ B_s$ is a purely singular pencil if the regular block $\lambda A_r+ B_r$ is absent, i.e., $X_r=\{0\}$ and $Y_r=\{0\}$.

According to \cite{GantmaherII}, one can always choose bases in $\Rn$ and $\Rm$ so that the matrix pencil $\lambda A+B$, which corresponds to the singular pencil of the operators $A, B\colon \Rn\to \Rm$ with respect to these bases,  has the canonical form that consists of singular and regular blocks of certain forms and is usually called the Weierstrass-Kronecker canonical form. Therefore, there exist the direct decompositions of the spaces $\Rn$, $\Rm$ such that the singular operator pencil $\lambda A+B$ has the form \eqref{penc}.
In the present paper, we use the special block form of the singular operator pencil (the corresponding matrix pencil is different from the canonical form presented in \cite{GantmaherII}), which is defined by the block structures of the operators $A$, $B$ and their singular blocks $A_s$, $B_s$, presented in Statement \ref{STABssr}. Below, the method for constructing the subspaces from the decompositions \eqref{ssr} and the associated projectors \eqref{ProjRS}, \eqref{ProjS}, which reduce the pencil to the mentioned block form, are presented. This method has been briefly described in \cite[Section~3]{Fil.UMJ} and described in detail in \cite[Section~3]{Fil.KNU2019}.

Previously, consider the kernel (the null-space) $\Ker (\lambda A+B)=\{x(\lambda) \mid (\lambda A+B)\,x(\lambda)\equiv 0\}$ and the range $\mathcal R(\lambda A+B)={\{y(\lambda) \mid \exists\, x:(\lambda A+B)\,x = y(\lambda)\}}$  of $\lambda A+B$. The dimensions of the kernel and the range of $\lambda A+B$ are equal to the dimensions of the kernel and the range of the complex extension $\lambda \hat{A}+\hat{B}$, respectively. Let $\lambda \in \mC$ be a fixed number. Since $\dim \Ker (\lambda \hat{A}+\hat{B})= \dim{\Cn} - \dim \mathcal R(\lambda \hat{A}+\hat{B})= n - \rank(\lambda \hat{A}+\hat{B})$, then $\dim \Ker (\lambda A+B)= n - \rank(\lambda A+B)$. Notice that $\rank(\lambda A+B)$ and $\dim \Ker (\lambda A+B) = n - \rank(\lambda A+B)$ are constants.

\paragraph{Construction of the direct decompositions of spaces and the associated projectors, which reduce the pencil to the block form.}\;

 \smallskip
\textbf{The case $\rank(\lambda A+B)=m<n$.}\; In the case when $\rank(\lambda A+B)=m<n$, there exist the direct decompositions \eqref{ssr}, where $Y_{s_1}=Y_s$ and $Y_{s_2}=\{0\}$ (i.e., $\Rn=X_s\dot +X_r$, $X_s=X_{s_1}\dot+X_{s_2}$ and $\Rm=Y_s\dot+Y_r$), with respect to which $A$, $B$ and their singular blocks $A_s$, $B_s$ have the structures \eqref{srAB} and \eqref{sab1}, respectively.
These direct decompositions generate the pairs of mutually complementary projectors \eqref{ProjRS} and \eqref{ProjS}, where $F_1=F$ and $F_2=0$, which satisfy the properties \eqref{ProjRSInvar}, \eqref{properProjS}  (more precisely, the decomposition $X_s=X_{s_1}\dot +X_{s_2}$ of the space  $X_s=S \Rn$  generates the pair of mutually complementary projectors, the extensions of which to $\Rn$ are the projectors $S_i$, $i=1,2$, defined in \eqref{ProjS}, such that $AS_2=0$).

First, we construct the singular subspaces $X_s$, $Y_s$, $X_{s_1}$ and $X_{s_2}$. To do this, we find the maximum number of linearly independent solutions $x_1(\lambda),\;  x_2(\lambda),\ldots,\; x_N(\lambda)$ of the equation
 \begin{equation}\label{har1}
(\lambda A+B)x=0.
 \end{equation}
It is sufficient to consider the solutions being polynomials in $\lambda$, that is,
\begin{equation}\label{har1x_j}
{x_j(\lambda )= \sum\limits_{i=0}^{k_j} (-1)^i \lambda^i x_{ji}},\quad {x_{ji}\ne 0},\;  {i=0,\ldots,k_j},\; {j=1,\ldots,N},
\end{equation}
where ${k_j\ge 0}$ is the degree of $x_j(\lambda )$. Clearly, the columns $x_1(\lambda),\ldots,\, x_N(\lambda)$ are linearly independent if the rank of the matrix formed from these columns is equal to~$N$.
Since the set of columns  $\{x_1(\lambda),\ldots,\, x_N(\lambda)\}$ forms a basis of $\Ker (\lambda A+B)$, then
$$
N =\dim\Ker (\lambda A+B)=n-\rank(\lambda A+B).
$$
Substituting $x_j(\lambda )$ in \eqref{har1} and equating coefficients of powers of $\lambda$ to zero, we obtain a set of equalities (cf. \cite[p.~29]{GantmaherII})
\begin{equation*}
Bx_{j 0}=0,\; B x_{j 1} = Ax_{j 0},\,\ldots\,,\; Bx_{j k_j}= Ax_{j k_j-1},\; Ax_{j k_j}=0.
\end{equation*}
Obviously, if $k_j= 0$,\, i.e., $x_j(\lambda )\equiv x_{j0}$ is a constant solution, then $Bx_{j0}=0$ and $Ax_{j 0}=Ax_{j k_j}=0$. 
Among all the solutions of the equation \eqref{har1}, we can select a collection of linearly independent solutions
$\{\hat{x}_j(\lambda )\}_{j=1}^{N}$ of the form \eqref{har1x_j}, i.e.,\; ${\hat{x}_j(\lambda)= \sum\limits_{i=0}^{k_j} (-1)^i \lambda^i \hat{x}_{ji}}$\; where ${\hat{x}_{ji}\ne 0}$,\; ${i=0,\ldots,k_j}$,\; ${j=1,\ldots,N}$,
with the lowest possible degrees $k_1,\, k_2,\, \ldots,\, k_N$ (this collection is not uniquely determined, but any two such collections of solutions have identical sets of degrees to within permutations; obviously, ${\sum\limits_{j=1}^{N} k_j \le m}$, ${\sum\limits_{j=1}^{N} k_j +N \le n}$, and we can select the collection such that ${k_1\le k_2\le \ldots\le k_N}$).
Then the vector sets $\{\hat{x}_{ji}\}_{j=1, i=0}^{N, k_j}$ and $\{B\hat{x}_{ji}\}_{j=1, i=1}^{N, k_j}=\{A\hat{x}_{ji}\}_{j=1, i=0}^{N, k_j-1}$ are linearly independent and are bases of the spaces
$$
X_s = \lin\{\hat{x}_{ji}\}_{ j=1, i=0}^{N, k_j},\quad Y_s = \lin\{B\hat{x}_{ji}\}_{j=1, i=1}^{N, k_j} =\lin\{A\hat{x}_{ji}\}_{j=1, i=0}^{N, k_j-1}.
$$
If all $k_j=0$\, ($j=1,...,N$), then $Y_s = \{0\}$, $X_{s_2}=X_s$,  $X_{s_1}=\{0\}$ and $A_s=0$, $B_s=0$, and the block form of the pencil has the form $\lambda A+B = \begin{pmatrix} 0 & \lambda A_r+B_r \end{pmatrix}$. 
Note that if we take an arbitrary maximal collection of linearly independent solutions $\{x_j(\lambda)\}_{j=1}^{N}$ of the equation \eqref{har1}, then the linear spans of the sets $\{x_{ji}\}_{j=1, i=0}^{N, k_j}$ and $\{Bx_{ji}\}_{j=1, i=1}^{N, k_j}$ also form the spaces  $X_s$ and $Y_s$ respectively, however, these sets may contain linearly dependent vectors. 
Further, one can construct the spaces (i.e., the bases of the spaces) $X_{s_1}$, $X_{s_2}$ and $X_s$ (the bases of $X_s$ consists of the union of the bases of $X_{s_1}$, $X_{s_2}$\,), using the set $\{x_{ji}\}_{j=1, i=0}^{N,\, k_j}$, and the space $Y_s$, using the set $\{Bx_{ji}\}_{j=1, i=1}^{N, k_j}$, so that the operators $A_s$, $B_s$ have the block structure \eqref{sab1}.
Obviously, $X_{s_2}=\Ker (A_s)$\, ($\dim X_{s_2}=N$), and $X_{s_1}$ is a direct complement of $X_{s_2}$ in $X_s$ \,($\dim X_{s_1}=\rank(A_s)=\rank(\EuScript A_s)$).
For the collection of linearly independent solutions $\{\hat{x}_j(\lambda)\}_{j=1}^{N}$ with the lowest possible degrees $k_j$, ${j=1,\ldots,N}$, these subspaces have the form
$$
X_{s_2}=\lin\{\hat{x}_{j k_j}\}_{j=1}^N,\quad X_{s_1}=\lin\{\hat{x}_{ji}\}_{j=1, i=0}^{N,k_j-1}\qquad \Big(\dim X_{s_1}=\sum\limits_{j=1}^N k_j\Big).
$$

Using the form of the obtained spaces $X_s$ and $Y_s$, one can construct the regular spaces $X_r$ and $Y_r$ from the decompositions \eqref{ssr} (where $Y_{s_1}=Y_s$, $Y_{s_2}=\{0\}$).  The form of the spaces $X_s$, $Y_s$, $X_r$, $Y_r$, $X_{s_1}$, and $X_{s_2}$ is used to construct the corresponding projectors with the above properties.

 \medskip
\textbf{The case $\rank(\lambda A+B)=n<m$.}\; In the case when $\rank(\lambda A+B)=n<m$, there exist the direct decompositions \eqref{ssr}, where $X_{s_1}=X_s$ and $X_{s_2}=\{0\}$ (i.e., $\Rn=X_s\dot +X_r$ and $\Rm=Y_s\dot+Y_r$, $Y_s=Y_{s_1}\dot +Y_{s_2}$), with respect to which $A$, $B$ and their singular blocks $A_s$, $B_s$ have the structures \eqref{srAB} and \eqref{sab2}, respectively.
These decompositions generate the pairs of mutually complementary projectors \eqref{ProjRS} and \eqref{ProjS}, where $S_1=S$ and $S_2=0$, which satisfy the properties \eqref{ProjRSInvar}, \eqref{properProjS}  (more precisely, the decomposition $Y_s=Y_{s_1}\dot +Y_{s_2}$ of the space $Y_s=F\, \Rm$  generates the pair of mutually complementary projectors, the extensions of which to $\Rm$ are the projectors $F_i$, $i=1,2$, defined in \eqref{ProjS}, such that $F_2 A=0$).

First, we construct the singular subspaces $X_s$, $Y_s$, $Y_{s_1}$ and $Y_{s_2}$. To do this, we find the maximum number of linearly independent solutions $y_1(\lambda),\ldots,\, y_M(\lambda)$ of the equation
\begin{equation}\label{har2}
  (\lambda A^\T+B^\T)y=0,
\end{equation}
where $\lambda A^\T+B^\T$ is the transposed pencil and
$$
M =\dim \Ker (\lambda A^\T+B^\T)= m-\rank(\lambda A+B).
$$
As above, it is sufficient to consider solutions of the form\;
\begin{equation}\label{har2y_j}
y_j(\lambda)= \sum\limits_{l=0}^{m_j} (-1)^l \lambda^l y_{jl},\quad {y_{jl} \ne 0},\quad {l=0,\ldots,m_j},\quad {j=1,\ldots,M},
\end{equation}
where ${m_j\ge 0}$.  Substituting $y_j(\lambda )$ in \eqref{har2} and equating the coefficients of the powers of $\lambda$ to zero, we obtain the set of equalities
$$
B^\T y_{j 0}=0,\; B^\T y_{j 1} = A^\T y_{j 0},\,\ldots\,,\; B^\T y_{j m_j}= A^\T y_{j m_j-1},\; A^\T y_{j m_j}=0.
$$
If $m_j= 0$, then $B^\T y_{j0}= A^\T y_{j0}= A^\T y_{j m_j}=0$.  Further, we construct the singular subspaces $\hat{X}_s=\hat{X}_{s_1}\dot +\hat{X}_{s_2}$, $\hat{Y}_s$, $\hat{X}_{s_2}=\Ker(A_s^\T)$ and $\hat{X}_{s_1}$ for the pencil $\lambda A^\T+B^\T$ in the same way as in the previous case (i.e., as for $\lambda A+B$ in the case when $\rank(\lambda A+B)=m<n$). For the collection of linearly independent solutions $\{y_j(\lambda)\}_{j=1}^M$ with the lowest possible degrees  $m_1,\, \ldots,\, m_M$, these subspaces have the form\; $\hat{X}_{s_2}=\lin\{y_{j m_j}\}_{j=1}^M$,\;  $\hat{X}_{s_1}=\lin\{y_{jl}\}_{j=1, l=0}^{M, m_j-1}$, $\hat{X}_s=\lin\{y_{jl}\}_{j=1, l=0}^{M, m_j}$ and
$\hat{Y}_s=\lin\{B^\T y_{jl}\}_{j=1, l=1}^{M, m_j}=\lin\{A^\T y_{jl}\}_{j=1, l=0}^{M, m_j-1}$. 
If all $m_j=0$, then $\hat{Y}_s= \{0\}$, $\hat{X}_{s_2}=\hat{X}_s$, and the singular blocks \eqref{sab2} are such that $A_s^\T=0$, $B_s^\T=0$.
Since $A^\T,\, B^\T\colon (\Rm)'\to (\Rn)'$, then the singular spaces $\hat{X}_{s_i}$, $i=1,2$, $\hat{X}_s$, $\hat{Y}_s$, constructed for $\lambda A^\T+B^\T$, coincide with the conjugate spaces of $Y_{s_i}$, $i=1,2$, $Y_s$, $X_s$   from the decompositions \eqref{ssr}, where $X_{s_1}=X_s$ and $X_{s_2}=\{0\}$, i.e., $\hat{X}_{s_i}=Y'_{s_i}$, $i=1,2$, $\hat{X}_s=Y'_s$ and $\hat{Y}_s=X'_s$.  Clearly, the conjugate space $(\R^k)'$ can be replaced by $\R^k$,  however, their bases may not coincide and therefore we reserve the notation $(\R^k)'$ here.
If $\hat{Y}_s= \{0\}$ and $\hat{X}_{s_2}=\hat{X}_s$, then the block form of the pencil takes the form  $\lambda A+B=\begin{pmatrix} 0 \\ \lambda A_r+B_r \end{pmatrix}$ and $X_s=\{0\}$, $Y_{s_2}=Y_s$, $Y_{s_1}=\{0\}$.
Using the form of the singular subspaces $\hat{X}_{s_i}$, $i=1,2$, $\hat{X}_s$, $\hat{Y}_s$, one can construct the regular subspaces $\hat{X}_r=Y'_r$, $\hat{Y}_r=X'_r$  from the direct decompositions $(\Rm )' =\hat{X}_s\dot +\hat{X}_r= Y'_s\dot + Y'_r$ and $(\Rn )'=\hat{Y}_s\dot +\hat{Y}_r=X'_s\dot + X'_r$  and the pair $\hat{S}_1$, $\hat{S}_2$, the pair $\hat{S}$, $\hat{P}$ and the pair $\hat{F}$, $\hat{Q}$ of the mutually complementary projectors
$\hat{S}_i\colon (\Rm )'\to \hat{X}_{s_i}$, ${i=1,2}$, $\hat{S}=\hat{S}_1+\hat{S}_2 \colon (\Rm )' \to \hat{X}_s$, $\hat{P}\colon (\Rm)' \to \hat{X}_r$,
$\hat{F}\colon (\Rn)' \to \hat{Y}_s$ and $\hat{Q}\colon (\Rn)'\to \hat{Y}_r$
such that $A^\T \hat{S}_2=0$, $\hat{S}^\T A = A\hat{F}^\T$, $\hat{S}^\T B =B\hat{F}^\T$, $\hat{P}^\T A =A\hat{Q}^\T$ and $\hat{P}^\T B=B\hat{Q}^\T$.  It follows from the properties of $\hat{S}_i$ that the transposed (adjoint) projectors $F_i=\hat{S}_i^\T\colon \Rm \to Y_{s_i}$, $i{=1, 2}$, ($F_1+F_2=F$) are mutually complementary projectors such that $F_2 A = 0$ and generate the direct decomposition $Y_s=Y_{s_1}\dot+ Y_{s_2}$ with respect to which $A_s$, $B_s$ have the block structure \eqref{sab2}.
Also, it follows from the properties of $\hat{S}$, $\hat{P}$ and $\hat{F}$, $\hat{Q}$ that the transposed (adjoint) projectors $S=\hat{F}^\T$, $P=\hat{Q}^\T$ and $F=\hat{S}^\T$, $Q=\hat{P}^\T$ are the two pairs of mutually complementary projectors \eqref{ProjRS} satisfying \eqref{ProjRSInvar} and they
generate the direct decompositions $\Rn =X_s\dot + X_r$, $\Rm = Y_s\dot + Y_r$ such that $A$, $B$ have the block structure \eqref{srAB} (accordingly,  the pair $\{X_s,Y_s\}$ and the pair $\{X_r,Y_r\}$  are invariant under the operators  $A$, $B$). Further, using the form of the projectors (or the conjugate subspaces), one can construct the subspaces from the direct decompositions \eqref{ssr}, where $X_{s_1}=X_s$ and $X_{s_2}=\{0\}$.

 \medskip
\textbf{The case $\rank(\lambda A+B) < n, m$ (the general case).}\;
In the general case, when $\rank(\lambda A+B)<n$ and $<m$, there exist the direct decompositions \eqref{ssr} with respect to which $A$, $B$ and their singular blocks $A_s$, $B_s$ have the structures \eqref{srAB} and \eqref{sab}, respectively. These decompositions generate the pairs of mutually complementary projectors \eqref{ProjRS}, \eqref{ProjS}, which satisfy \eqref{ProjRSInvar}, \eqref{properProjS}  (more precisely, the decompositions \eqref{sGener} generate the two pairs of mutually complementary projectors, the extensions of which to $\Rn$, $\Rm$ are the projectors $S_i$, $F_i$ $i=1,2$, defined in \eqref{ProjS}, such that \eqref{properProjS} holds).

In this case, for the construction of the singular spaces and the corresponding projectors it is necessary to find $N=n-\rank(\lambda A+B)$ linearly independent solutions of the equation \eqref{har1} and $M=m-\rank(\lambda A+B)$ linearly independent solutions of the equation \eqref{har2} (we seek the solutions of the form \eqref{har1x_j} and \eqref{har2y_j} with the lowest possible degrees, as was done above). 
Further, analyzing the solutions of \eqref{har1} and \eqref{har2},  we construct the singular spaces $X_s$, $Y_s$, $X_{s_1}$, $X_{s_2}$, $Y_{s_1}$, $Y_{s_2}$ and the associated projectors with regard for their properties enabling to obtain the block structure \eqref{sab}.
The basis of $X_{s_2}=\Ker(A_s)$ consists of the vectors obtained by analyzing the solutions of \eqref{har1} and is constructed in the same way as the basis of $X_{s_2}$ in the case when $\rank(\lambda A+B) = m<n$;\,
the basis of $Y_{s_2}=(\Ker (A_s^\T))'$ consists of the vectors obtained by analyzing the solutions of \eqref{har2} and is constructed in the same way as a basis of $Y_{s_2}$ in the case when $\rank(\lambda A+B) = n<m$.
The basis of $X_{s_1}$ consists of the vectors that are found in the same way as the basis vectors of $X_{s_1}$ in the case when $\rank(\lambda A+B) = m<n$ and the vectors that are found in the same way as the basis vectors of $X_s$ in the case when $\rank(\lambda A+B) =n<m$.
The basis of $Y_{s_1}$ consists of the vectors that are found in the same way as the basis vectors of $Y_{s_1}$ in the case when $\rank(\lambda A+B) = n<m$ and the vectors that are found in the same way as the basis vectors of $Y_s$ in the case when $\rank(\lambda A+B) = m<n$.
The bases of the spaces $X_s$ and $Y_s$ which are the direct sums \eqref{sGener} consist of the union of bases of their summands.
Based on the form of the singular spaces $X_s$ and $Y_s$, we construct the regular spaces $X_r$ and $Y_r$ from the direct decompositions \eqref{ssr}.

\begin{definition}[{\cite{Fil.KNU2019, Fil.UMJ}}]\label{DefectPencil}
The total maximum number $d(\lambda A+B) = n+m-2\, \rank(\lambda A+B) =\dim \Ker (\lambda A+B) +\dim \Ker (\lambda A^\T+B^\T)$  of linearly independent solutions of the equation \eqref{har1} and linearly independent solutions of the equation \eqref{har2} is called the \emph{total defect of the pencil} $\lambda A+B$\, (in  \cite{Fil.UMJ} it was called a defect). The total maximum number of linearly independent solutions of the equation \eqref{har1}, i.e., $\dim \Ker (\lambda A+B)$, is called the \emph{defect of the pencil} $\lambda A+B$.
\end{definition}
For the pencil of the rank $\rank(\lambda A+B)= m<n$, the total defect is $d(\lambda A+B)= \dim \Ker (\lambda A+B)= n-\rank(\lambda A+B)$.  If the pencil has the rank $\rank(\lambda A+B)= n<m$, its total defect is $d(\lambda A+B) = \dim \Ker (\lambda A^\T+B^\T) = m-\rank(\lambda A+B)$. The total defects of the original and transposed pencils coincide, and if $n=m$, their defects also coincide. The defect and the total defect of a regular pencil are equal to zero.

\paragraph{The block structure of a regular pencil of index not higher than 1, and the method for constructing the associated direct decompositions of spaces and projectors.}
Consider a regular pencil $\lambda A_r+ B_r$ of operators $A_r,\, B_r\colon X_r\to Y_r$ acting in finite-dimensional spaces ($\dim X_r =\dim Y_r$).
We assume that the point $\mu=0$ is either a pole of order 1 of the resolvent $(A_r+ \mu B_r)^{-1}$ or a regular point of the pencil $A_r+ \mu B_r$ (i.e., either $\lambda=\infty$ is a removable singular point of the resolvent  $(\lambda A_r+B_r)^{-1}$, or the operator $A_r$ is invertible). These conditions are equivalent to the following: there exist constants $C_1,\, C_2 >0$ such that
 \begin{equation}\label{index1}
\left\|(\lambda A_r+B_r)^{-1}\right\|\le C_1,\quad   |\lambda|\ge C_2.
 \end{equation}

If the point $\mu=0$ is a pole of the resolvent $(A_r+\mu B_r)^{-1}$, then the order $\nu$ ($\nu\in {\mathbb N}$) of the pole is called the \emph{index of the regular pencil $\lambda A_r+B_r$}, and in the case if $\mu=0$ is a regular point of the pencil $A_r+\mu B_r$, the \emph{index of the regular pencil $\lambda A_r+B_r$} is $\nu=0$ \cite{Fil.KNU2019}.
This definition is equivalent to the following: The maximum length of the chain of root vectors of the pencil $A_r+\mu B_r$ at the point $\mu=0$ is called the \emph{index} of the regular pencil $\lambda A_r+B_r$, where $A_r$, $B_r$ are square matrices \cite[Section~6.2]{Vlasenko1}.

Thus, if $\lambda A_r+B_r$ is a regular pencil and \eqref{index1} holds, then $\lambda A_r+B_r$ is a regular pencil of \emph{index not higher than 1}, i.e., index~0 or~1. If $A_r$ is invertible, then $\lambda A_r+B_r$ is a regular pencil of \emph{index~0}, and if $A_r$ is noninvertible and \eqref{index1} holds, then $\lambda A_r+ B_r$ is a regular pencil of \emph{index 1}. Note that if $A_r=0$ and there exists $B_r^{-1}$, then $\lambda A_r+ B_r\equiv B_r$ can be considered as a regular pencil of index 1.

 \begin{remark}\label{Rem-RegPenc}
Assume that the regular block $\lambda A_r+ B_r$ from \eqref{penc} is a regular pencil of index not higher than 1. Then there exists the pair $\Tilde P_i\colon X_r\to X_i$, $i=1,2$, and the pair $\Tilde Q_j\colon Y_r \to Y_j$, $j=1,2$, of mutually complementary projectors which generate the decompositions of the regular spaces $X_r$, $Y_r$ into the direct sum of subspaces
 \begin{equation}\label{rr}
X_r = X_1 \dot +X_2,\quad Y_r = Y_1 \dot +Y_2
 \end{equation}
such that the pairs of subspaces $X_1$, $Y_1$ and $X_2$, $Y_2$ are invariant under $A_r$, $B_r$ ($A_r,\, B_r \colon  X_i\to Y_i$, $i=1,2$), i.e.,
\begin{equation}\label{ProjRInv}
\Tilde Q_j A_r=A_r \Tilde P_j,\quad \Tilde Q_j B_r=B_r \Tilde P_j,
\end{equation}
$X_2=\Ker(A_r)$ (\,$A_r \Tilde P_2=0$\,), $Y_1=\mathcal R(A_r)=A_r X_r$, and the restricted operators $A_i=A_r\big|_{X_i}\colon X_i\to Y_i$, $B_i=B_r\big|_{X_i}\colon X_i\to Y_i$, $i=1,2$, are such that  ${A_2=0}$ and there exist $A_1^{-1} \in \mathrm{L}(Y_1,X_1)$\, (if $X_1\not=\{0\}$) and $B_2^{-1} \in \mathrm{L}(Y_2,X_2)$\, (if $X_2\not=\{0\}$). For a regular pencil of operators, the pairs of projectors with the specified properties were introduced in \cite{Rut}.  
If the index of the pencil is higher than 1, then $\Ker (A_r)\subsetneqq X_2$ and, therefore, $A_2\ne 0$.
With respect to the direct decompositions \eqref{rr} the operators $A_r$, $B_r$ have the block structure
\begin{equation}\label{rab}
  A_r = \begin{pmatrix}  A_1 & 0 \\ 0 & 0
  \end{pmatrix},\;
  B_r = \begin{pmatrix}  B_1 & 0 \\ 0 & B_2
  \end{pmatrix}\colon X_1\dot +X_2 \to Y_1\dot +Y_2\quad (X_1\times X_2 \to Y_1\times Y_2),
\end{equation}
where $A_1$ and $B_2$ are invertible (if $X_1\not=\{0\}$ and $X_2\not=\{0\}$ respectively).

The projectors $\Tilde P_i$, $\Tilde Q_i$, $i=1,2$, can be calculated by the formulas from \cite[Lemma 3.2]{Rut}, or by the formulas \cite[(28), p.~44]{Fil.KNU2019}:
 \begin{equation}\label{ProjRes}
\Tilde P_1 \!=\! \mathop{Res }\limits_{\mu =0}\bigg(\frac{(A_r+ \mu B_r)^{-1} A_r}{\mu} \bigg),\;
\Tilde P_2 \!=\!I_{X_r}-\Tilde P_1,\;\;
\Tilde Q_1 \!=\!\mathop{Res }\limits_{\mu =0}\bigg(\frac{A_r(A_r+ \mu B_r)^{-1}}{\mu} \bigg),\;
\Tilde Q_2 \!=\!I_{Y_r} - \Tilde Q_1,
 \end{equation}
or simply by constructing operators $\Tilde P_i$, $\Tilde Q_i$, $i=1,2$, such that  $\Tilde P_2^2=\Tilde P_2$, $A_r \Tilde P_2 = 0$, $\Tilde P_1= I_{X_r}-\Tilde P_2$, $\Tilde Q_1^2=\Tilde Q_1$ (or $\Tilde Q_2^2=\Tilde Q_2$), $\Tilde Q_2+\Tilde Q_1= I_{Y_r}$ and \eqref{ProjRInv} is fulfilled \,(note that if an operator $\Tilde P_2$ is a projector, then $\Tilde P_1= I_{X_r}-\Tilde P_2$ is also a projector and the projectors $\Tilde P_1$, $\Tilde P_2$ are mutually complementary) \cite[Remark~3, p.~44--45]{Fil.KNU2019}.

In addition, the direct decompositions of spaces \eqref{rr} generate the projectors $\Tilde P_i$, $\Tilde Q_i$, $i=1,2$. To obtain the subspaces from the direct decompositions \eqref{rr}, one can construct the subspaces $X_2=\Ker(A_r)$, $Y_1=\mathcal R(A_r)$ and then construct direct complements $X_1$ and $Y_2$ of the spaces $X_2$ and $Y_1$, respectively, such that the pairs $X_1$, $Y_1$ and $X_2$, $Y_2$ are invariant under $B_r$  (obviously, these pairs are invariant under $A_r$). It is clear that $Y_2=B_rX_2$ and $X_1=(\lambda A_r+B_r)^{-1}Y_1$, $|\lambda|\ge C_2$ ($C_2$ is a constant defined in \eqref{index1}) \cite{Vlasenko1}.
  \end{remark}

Introduce the extensions $P_i$, $Q_i$ of the projectors $\Tilde P_i$, $\Tilde Q_i$, defined in Remark~\ref{Rem-RegPenc}, to $\Rn$, $\Rm$, respectively, so that $X_i=P_i\Rn$, $Y_i=Q_i\Rm$, $i=1,2$ (where $X_i$, $Y_i$ from \eqref{rr}) \cite{Fil.KNU2019}. The extended projectors
\begin{equation}\label{ProjR}
P_i\colon \Rn \to X_i,\quad Q_i\colon \Rm \to Y_i,\quad i=1,2,
\end{equation}
have the properties of the original ones: the pairs $P_1$, $P_2$ and $Q_1$, $Q_2$ are two pairs of mutually complementary projectors (i.e., $P_i P_j =\delta _{ij} P_i$, $P_1+P_2=P$, $Q_i Q_j =\delta _{ij} Q_i$, $Q_1+Q_2=Q$) and
\begin{equation}\label{ProjPropInd1}
Q_i A=A P_i,\quad Q_i B=B P_i,\quad A P_2=0.
\end{equation}
Also, the properties of the operators
\begin{equation}\label{A_iB_i}
A_i=A\big|_{X_i}=A_r\big|_{X_i},\quad B_i = B\big|_{X_i}=B_r\big|_{X_i},\quad i=1,2,
\end{equation}
which are specified in Remark~\ref{Rem-RegPenc}, are retained.  Introduce their extensions to $\Rn$ as follows:
\begin{equation}\label{ABrrExtend}
 \Es A_i=Q_i A,\quad \Es B_i=Q_i B,\quad i=1,2.
\end{equation}
Then
 \begin{equation}\label{ABrr}
\Es A_i\big|_{X_i}=A_i,\quad \Es B_i\big|_{X_i}=B_i,\quad  i=1,2,
 \end{equation}
and the operators $\Es A_i,\, \Es B_i\in \mathrm{L}(\Rn,\Rm)$, $i=1,2$, act so that $\Es A_1\Rn =\Es A_1 X_1= Y_1$ \,($X_2\dot +X_s= \Ker(\Es A_1)$), $\Es A_2=0$,  $\Es B_1\colon \Rn\to Y_1$, ${X_2\dot +X_s\subset \Ker(\Es B_1)}$, and  $\Es B_2\Rn =\Es B_2 X_2= Y_2$ \,(${X_1\dot +X_s= \Ker(\Es B_2)}$).

The semi-inverse operator $\Es A_1^{(-1)}$ of $\Es A_1$, i.e., the operator $\Es A_1^{(-1)}\in \mathrm{L}(\Rm,\Rn)$ such that $\Es A_1^{(-1)}\Rm =\Es A_1^{(-1)} Y_1 =X_1$ \,($\Ker(\Es A_1^{(-1)})=Y_2\dot + Y_s$) and $A_1^{-1}=\Es A_1^{(-1)}\big|_{Y_1}$, can be defined by the relations
\begin{equation}\label{InvA1}
\Es A_1^{(-1)} \Es A_1=P_1,\quad \Es A_1 \Es A_1^{(-1)}=Q_1,\quad \Es A_1^{(-1)}=P_1 \Es A_1^{(-1)}.
\end{equation}

Similarly, the semi-inverse operator $\Es B_2^{(-1)}\in \mathrm{L}(\Rm,\Rn)$ of $\Es B_2$, i.e., the operator $\Es B_2^{(-1)}\in \mathrm{L}(\Rm,\Rn)$ such that  $\Es B_2^{(-1)}\Rm =\Es B_2^{(-1)}Y_2=X_2$ \,($\Ker(\Es B_2^{(-1)})=Y_1\dot + Y_s$) and $B_2^{-1}=\Es B_2^{(-1)}\big|_{Y_2}$,  can be defined by the relations
\begin{equation}\label{InvB1}
\Es B_2^{(-1)}\Es B_2=P_2,\quad \Es B_2\Es B_2^{(-1)}=Q_2,\quad \Es B_2^{(-1)}=P_2 \Es B_2^{(-1)}.
\end{equation}

Note that $\Es A_1^{(-1)}Q_1=P_1 \Es A_1^{(-1)}=\Es A_1^{(-1)}$ and $\Es B_2^{(-1)}Q_2=P_2 \Es B_2^{(-1)}=\Es B_2^{(-1)}$.

 \begin{remark}\label{Rem-Decomp_x_y}
The decompositions \eqref{ssr} and \eqref{rr} together give the direct decomposition
 \begin{equation}\label{Xssrr}
\Rn=X_s\dot +X_r =X_{s_1}\dot +  X_{s_2}\dot +  X_1 \dot +X_2
 \end{equation}
with respect to which any element $x\in \Rn$ can be \emph{uniquely represented} (the uniqueness of the representation follows from the definition of a direct sum of subspaces) in the form
 \begin{equation}\label{xsr}
x= x_s+x_r= x_{s_1}+x_{s_2}+x_{p_1}+x_{p_2}\qquad (x_s=x_{s_1}+x_{s_2},\quad x_r=x_{p_1}+x_{p_2}),
 \end{equation}
where $x_s=Sx\in X_s$, \,$x_r=Px\in X_r$, \,$x_{s_i}=S_i x\in X_{s_i}$, \,$x_{p_i}=P_i x\in X_i$, $i=1,2$.

Similarly, the decompositions \eqref{ssr} and \eqref{rr} together give the direct decomposition
\begin{equation}\label{Yssrr}
 \Rm=Y_s\dot +Y_r =Y_{s_1}\dot +  Y_{s_2}\dot + Y_1 \dot +Y_2,
\end{equation}
with respect to which any element $y\in \Rm$ can be uniquely represented as
 \begin{equation}\label{ysr}
y= y_s+y_r= y_{s_1}+y_{s_2}+y_{p_1}+y_{p_2}\qquad (y_s=y_{s_1}+y_{s_2},\quad y_r=y_{p_1}+y_{p_2}),
 \end{equation}
where $y_s=Fy\in Y_s$, $y_r=Qy\in Y_r$, $y_{s_i}=F_i\, y\in Y_{s_i}$ and $y_{p_i}=Q_i\, y\in Y_i$, $i=1,2$.
 \end{remark}

 \begin{remark}\label{Rem-RegCase}
In the case when the pencil $\lambda A+B$ is regular, i.e.,  $n=m=\rank(\lambda A+B)$, we have $A=A_r,\, B=B_r\in \mathrm{L}(\Rn)$, $X_r=Y_r=\Rn$,  $P_j=\Tilde P_j\colon \Rn\to X_j$ and $Q_j=\Tilde Q_j\colon \Rn\to Y_j$, $j=1,2$, (thus, there is no need to introduce extensions of the projectors $\Tilde P_j$, $\Tilde Q_j$), and, accordingly, $X_j=P_j\Rn$, $Y_j=Q_j\Rn$ ($j=1,2$) in the direct decompositions \eqref{rr}. Since we assume that $\lambda A+B=\lambda A_r+B_r$ is a regular pencil of index not higher than 1, then the operators $A$, $B$ have the block structure \eqref{rab}. The extended operators $\Es A_j,\, \Es B_j\in \mathrm{L}(\Rn)$ (see \eqref{ABrrExtend}) and the semi-inverse  $\Es A_1^{(-1)},\,\Es B_2^{(-1)}\in \mathrm{L}(\Rn)$  are introduced in the same way as above. The methods for constructing the subspaces from the direct decompositions \eqref{rr} and the projectors $P_j=\Tilde P_j$ and $Q_j=\Tilde Q_j$ ($j=1,2$) are specified in Remark~\ref{Rem-RegPenc}.

The representations \eqref{xsr} and \eqref{ysr} take the form
 \begin{equation}\label{xrr}
x= x_{p_1}+x_{p_2},\qquad x_{p_i}=P_i x\in X_i,\quad i=1,2.
 \end{equation}
 \begin{equation}\label{yrr}
y=y_{p_1}+y_{p_2},\qquad y_{p_i}=Q_i\, y\in Y_i,\quad i=1,2.
 \end{equation}
Recall that the representations \eqref{xrr} and \eqref{yrr} are uniquely determined for each $x\in \Rn$ and each $y\in \Rn$, respectively.

Also, consider the invertible operator $G\in \mathrm{L}(\Rn)$ ($G^{-1}\in \mathrm{L}(\Rn)$) \cite[Sections~3.3.1, 3.3.2]{Vlasenko1}
\begin{equation}\label{Proj.G}
G=A+Q_2B =A +BP_2,
\end{equation}
where $P_2$, $Q_2$ are the projectors introduced above, which acts so that $GX_j =Y_j$, $j=1,2$.
 \end{remark}

 \begin{remark}[{cf. \cite[Section 2.3]{Fil.Sing-GN}}]\label{Rem-Correspond}
The space $\Rn=X_{s_1}\dot+ X_{s_2}\dot+X_1 \dot+X_2$ is isomorphic to the space $X_{s_1}\times X_{s_2}\times X_1 \times X_2$, and there exists a one-to-one correspondence between an element $x=x_{s_1}+x_{s_2}+x_{p_1}+x_{p_2}$ and $x=(x_{s_1},x_{s_2},x_{p_1},x_{p_2})$, where $x_{s_i}\in X_{s_i}$ and $x_{p_i}\in X_i$, $i=1,2$. To establish this correspondence, we identify an ordered collection $(x_{s_1},x_{s_2},x_{p_1},x_{p_2})\in X_{s_1}\times X_{s_2}\times X_1 \times X_2$ (which is assumed to be a column vector) with the corresponding element $x_{s_1}+x_{s_2}+x_{p_1}+x_{p_2}\in X_{s_1}\dot+ X_{s_2}\dot+X_1 \dot+X_2$.
We define a norm in the space $X_{s_1}\times X_{s_2}\times X_1 \times X_2$ so that the norms of elements $x_{s_1}\in X_{s_1}$,  $x_{s_2}\in X_{s_2}$, $x_{p_1}\in X_1$ and $x_{p_2}\in X_2$  coincide with the norms of the elements (corresponding ordered collections)  $(x_{s_1},0,0,0)$, $(0,x_{s_2},0,0)$, $(0,0,x_{p_1},0)$ and $(0,0,0,x_{p_2})$ from $X_{s_1}\times X_{s_2}\times X_1 \times X_2$, respectively.  Also, norms in $X_{s_1}\dot+ X_{s_2}\dot+X_1 \dot+X_2$ and $X_{s_1}\times X_{s_2}\times X_1\times X_2$ are defined so that they coincide for any element $x\in \Rn$, i.e., for any $x=x_{s_1}+x_{s_2}+x_{p_1}+x_{p_2}$ and $x=(x_{s_1},x_{s_2},x_{p_1},x_{p_2})$, where $x_{s_i}\in X_{s_i}$, $x_{p_i}\in X_i$, $i=1,2$, are the same.
Thus, the correspondence between the representations of $x\in \Rn$ in the form $x=x_{s_1}+x_{s_2}+x_{p_1}+x_{p_2}$ (i.e., \eqref{xsr}) and in the form $x=(x_{s_1},x_{s_2},x_{p_1},x_{p_2})$, where $x_{s_i}\in X_{s_i}$ and $x_{p_i}\in X_i$, $i=1,2$, is established. Since we identified these representations, we can set $x=x_{s_1}+x_{s_2}+x_{p_1}+x_{p_2}=(x_{s_1},x_{s_2},x_{p_1},x_{p_2})$.

In a similar way, an ordered collection (a column vector) $y=(y_{s_1},y_{s_2},y_{p_1},y_{p_2})\in Y_{s_1}\times Y_{s_2}\times Y_1 \times Y_2$ can be identified with the corresponding element  $y=y_{s_1}+y_{s_2}+y_{p_1}+y_{p_2}\in \Rm=Y_{s_1}\dot+ Y_{s_2}\dot+Y_1 \dot+Y_2$.
 \end{remark}

{\small
\paragraph{Supports.} This work was supported by the Alexander von Humboldt Foundation. 

\vspace*{-3mm}

}


\begin{thebibliography}{}
\bibitem{BGHHST}  P. Benner,   S. Grundel,  C. Himpe,  C.Huck,   T. Streubel,  C. Tischendorf,  Gas Network Benchmark Models.  In: S. Campbell,  A. Ilchmann,  V. Mehrmann,  T. Reis (eds.), Applications of Differential-Algebraic Equations: Examples and Benchmarks. Differential-Algebraic Equations Forum, pp. 171--197.  Springer, Cham (2018). 


\bibitem{ChistyakovCh11} V.F. Chistyakov,  E.V. Chistyakova,  Application of the least squares method to solving linear differential-algebraic equations. Numer. Analys. Appl. \textbf{6}, 77–90 (2013). 


\bibitem{Faddeev} D.K. Faddeev,  Lectures on algebra. Nauka, Moscow, 1984. [in Russian]

\bibitem{Fil.Sing-GN} Filipkovska M. Qualitative analysis of nonregular differential-algebraic equations and the dynamics of gas networks.
     J. of Math. Phys., Anal., Geom.  \textbf{19}(4), 719--765  (2023). \url{https://doi.org/10.15407/mag19.04.719}

\bibitem{Fil.UMJ}  M.S. Filipkovska, Lagrange stability and instability of irregular semilinear differential-algebraic equations and applications. Ukrainian Math. J. \textbf{70}(6), 947--979 (2018).  \url{https://doi.org/10.1007/s11253-018-1544-6}

\bibitem{Fil.KNU2019} M.S. Filipkovska (Filipkovskaya),  A block form of a singular pencil of operators and a method of obtaining it. Visnyk of V.N. Karazin Kharkiv National University. Ser. ``Mathematics, Applied Mathematics and Mechanics''  \textbf{89},  33--58 (2019). [in Russian] \url{https://doi.org/10.26565/2221-5646-2019-89-04}


\bibitem{Fil.MPhAG} M.S. Filipkovska,  Lagrange stability of semilinear differential-algebraic equations and application to nonlinear electrical circuits. J. of Math. Phys., Anal., Geom. \textbf{14}(2), 169--196 (2018). \url{https://doi.org/10.15407/mag14.02.169}



\bibitem{Fil.GSA} M. Filipkovska (Filipkovskaya), Existence, boundedness and stability of solutions of time-varying semilinear differential-algebraic equations. Global and Stochastic Analysis, \textbf{7}(2), 169--195 (2020).

\bibitem{GantmaherII}  F.R. Gantmacher,  The theory of matrices, Vol. II. American Mathematical Society, Providence, Rhode Island, 2000.

\bibitem{Hartman} P. Hartman, Ordinary differential equations. John Wiley \& Sons, New York, 1964.
   

\bibitem{KSSTW22}  T. Kreimeier,  H. Sauter,  S.T. Streubel,  C. Tischendorf, A. Walther, Solving Least-Squares Collocated Differential Algebraic Equations by Successive Abs-Linear Minimization -- A Case Study on Gas Network Simulation, Humboldt-Universit{\"a}t zu Berlin, 2022 [Preprint].

\bibitem{Kunkel_Mehrmann} P. Kunkel,  V. Mehrmann, Differential-Algebraic Equations: Analysis and Numerical Solution. European Mathematical Society, Zurich, 2006.

\bibitem{LaSal-Lef}   J. La Salle,  S. Lefschetz, Stability by Liapunov's direct method with applications. Academic Press, New York, 1961.

\bibitem{Schwartz1}  L. Schwartz, Analyse Math\'{e}matique, I. Hermann, Paris, 1967.  [in French]

\bibitem{Schwartz2} L. Schwartz, Analyse Math\'{e}matique, II, Hermann, Paris, 1967. [in French]

\bibitem{Rut}   A.G. Rutkas, Cauchy problem for the equation $Ax'(t) +Bx(t) = f(t)$. Differential Equations \textbf{11}(11),  1996--2010 (1975).  [in Russian]

\bibitem{rut-sing}  A.G. Rutkas, Solvability of semilinear differential equations with singularity. Ukrainian Math. J. \textbf{60}, 262--276 (2008). 

\bibitem{Vlasenko1} L.A.  Vlasenko,  Evolution Models with Implicit and Degenerate Differential Equations. System Technologies, Dnipropetrovsk, Ukraine (2006). [in Russian]
\end{thebibliography}
\end{document}